\title{\bf A positivity-preserving and convergent numerical scheme for the binary fluid-surfactant system}
\date{}
\author[a,b]{Yuzhe Qin}
\affil[a]{Research Center for Mathematics and Mathematics Education, Beijing Normal University at Zhuhai, 519087, China}
\affil[b]{Laboratory of Mathematics and Complex Systems (Ministry of Education), 
School of Mathematical Sciences, Beijing Normal University, Beijing 100875, People's Republic of China}
\author[c]{Cheng Wang\thanks{Corresponding author.}}
\affil[c]{Mathematics Department, University of Massachusetts, North Dartmouth, MA 02747, USA}
\author[b]{Zhengru Zhang} 
\newtheorem{theorem}{Theorem}[section]
\newtheorem{lemma}{Lemma}[section]
\newtheorem{remark}{Remark}[section]
\numberwithin{equation}{section}
\numberwithin{equation}{section}
\begin{document}
\maketitle
\newcommand\blfootnote[1]
{
\begingroup 
\renewcommand\thefootnote{}\footnote{#1}
\addtocounter{footnote}{-1}
\endgroup 
}
\blfootnote{Email addresses: yzqin@bnu.edu.cn(Y. Qin), cwang1@umassd.edu (C. Wang), 
zrzhang@bnu.edu.cn(Z. Zhang)}
\begin{abstract}
In this paper, we develop a first order (in time) numerical scheme for the binary fluid surfactant phase field model. The free energy contains a double-well potential, a nonlinear coupling entropy and a Flory-Huggins potential. The resulting coupled system consists of two Cahn-Hilliard type equations. This system is solved numerically by finite difference spatial approximation, in combination with convex splitting temporal discretization. We prove the proposed scheme is unique solvable, positivity-preserving and unconditionally energy stable. In addition, an optimal rate convergence analysis is provided for the proposed numerical scheme, which will be the first such result for the binary fluid-surfactant system. Newton iteration is used to solve the discrete system. Some numerical experiments are performed to validate the accuracy and energy stability of the proposed scheme.
\end{abstract}
{\textbf{Key words:} Binary fluid-surfactant system, convex splitting, positivity-preserving, unconditional energy stability, Newton iteration}


\section{Introduction}
Two important characteristics of surfactants in binary fluid is that they can move towards the fluid interface due to their amphiphilic structure and they can reduce the interfacial tension and system energy \cite{Myers2005Surfactant}. Therefore, surfactants have various applications in many fields such as biotechnology and industry because of their features~\cite{Branger2002Accelerated,Liu2010Phase}. In the past two decades, there have been a number of excellent studies related to models with surfactants~\cite{Fonseca2007Surfactants, Komura1997Two, Laradji1992The, Sman2006Diffuse, Xiao2018Numerical, Yang2017Linear}. Often, there are two different ideas to model the interfacial dynamics with surfactants. One is the sharp interface model which has a long history dated back to one century ago~\cite{Gibbs1874Transactions,Stefan1970Uber}, and this kind of model has been adopted in \cite{James2004A,Khatri2014An}. In fact, sharp interface models have made great progresses in explaining kinetics of diffusional phase transformations and simulating multiphase systems with surfactants at one time. However, there are some difficulties stemming from the interface interactions with various complex processes during the course of phase transformations \cite{Liu2010Phase}. The other approach method is the known phase-field method~\cite{Fix1983Phase, Langer1986Models, Rayleigh1892On, Waals1979The}. This approach makes use of an appropriate free energy functional to character the interfacial dynamics, and it has been adopted to investigate the interfacial dynamics with surfactants in~\cite{Fonseca2007Surfactants, Laradji1992The, Teigen2011A, Teramoto2001Droplet, Theissen1999Lattice}. In particular, phase-field method was used to in \cite{Laradji1992The} to study the phase transition behaviors of the monolayer microemulsion system, formed by surfactant molecules. Generally, the free energy of binary fluid-surfactant model consists of the following two parts: the first part is the classical Ginzburg-Landau double well potential, which is used to describe a binary mixture, and the other part, called nonlinear coupling entropy term, has a historical evolution process, and is used to account for the influence of the surfactant in boosting the formation of interfaces. Laradji et al. in the pioneering work \cite{Laradji1992The} introduced two phase field variables to represent the local densities of the fluids, as well as the local concentration of the surfactant, respectively. As mentioned in~\cite{Komura1997Two}, an extra diffusion term was added to prevent the model from becoming unbounded and a Ginzburg-Landau type potential for the concentration variable to allow the coexistence of the two bulk states. In order to restrict the range of the concentration variable, the authors added the logarithmic Flory-Huggins potential in \cite{Sman2006Diffuse}, based on the nonlinear coupling entropy similar to~\cite{Komura1997Two, Laradji1992The}. In consideration of penalizing the concentration to accumulate along the fluid interface, the authors changed the nonlinear coupled entropy slightly in~\cite{Fonseca2007Surfactants}.  In addition, a further modified model was considered in~\cite{Teng2013Simulating} by adding the Flory-Huggins potential for the local concentration variable as well, in comparison with the model in~\cite{Fonseca2007Surfactants}. 

In this paper, we focus on constructing unconditionally energy stable numerical schemes for the binary fluid-surfactant model in \cite{Teng2013Simulating}. 
There have been some works about numerical approximation to multi-phase models~\cite{Diegel2017A,Ji2019A}. 
Owing to the stiff nonlinear terms originated from the thin interface thickness parameter, 
there are a lot of subtle difficulties to construct numerical schemes with unconditional energy stability, 
especially for the second order accurate (in time) scheme. Lots of efforts have been made to solve these problems~\cite{Diegel2017A, Yan2018A, Zhang2019Linear}, etc. 
Since a simple fully implicit or explicit type discretization brings extremely severe time step size constraint on the interfacial width~
\cite{Boyer2009Numerical, Feng2003Numerical, Shen2010Numerical},  a semi-implicit method was adopted in \cite{Teng2013Simulating}. 
However, the author mentioned that it suffers from a small CFL conditional number. 
Recently, Gu et al. in \cite{Gu2014An} constructed an energy stable finite difference scheme for the binary fluid-surfactant system, 
which is based on the convex splitting approach~\cite{Eyre1998Un, Jie2012Second, Wang2011Energy, Wu2017Stabilized}: 
implicit treatment for the convex part and explicit treatment for the concave part. 
Meanwhile, it is observed that, the convexity analysis for one mixed term has not been theoretically justified in \cite{Gu2014An}, due to the multi variables involved in the system. 
In addition, the positivity-preserving property has not been theoretically proved, so that the well-defined nature of the numerical scheme is not available. 
More recently, Yang et al. constructed the linear and stable schemes for the binary fluid-surfactant system with constant mobility in~\cite{Yang2017Linear}, 
using the invariant energy quadratization (IEQ) technique~
\cite{Cheng2017Efficient, Guill2013On, Han2017Numerical, Yang2016Linear, Yang2017Efficient, Yang2017Numerical, Yang2017Numerical2, Jia2017Numerical}. 
In this approach, the free energy is transformed into an equivalent quadratic form by introducing appropriate auxiliary variables, 
and all nonlinear terms in this system are treated semi-explicitly~\cite{Yang2017Linear}. 
The energy stability has been derived for the IEQ method, while such a stability has to be based on an alternate energy functional (involved with auxiliary variables), 
not for the original energy functional, as always in the IEQ approach. Moreover, the positivity-preserving property is not available to the IEQ-based numerical method, 
because of the explicit treatment for the nonlinear logarithmic term. 
In addition to the IEQ idea, Zhu et al. proposed the scalar auxiliary variable (SAV) method to the surfactant model in \cite{Zhu2018Decoupled}, 
following similar ideas in~\cite{Jie2017A, Jie2018The}. The SAV approach introduces a constant-coefficient linear equation to solve at each time step, 
and the energy stability could be derived for an alternate energy involved with a scalar variable. The convergence and error estimate for the SAV approach, 
for the typical Cahn-Hilliard equation with double-well potential, has also been established in recent works~\cite{Xiao2018Energy, Jie2018Convergence}. 
However, an application of the SAV approach to the surfactant model could not overcome the difficulty to theoretically justify the positivity-preserving property, 
due to the explicit treatment of the logarithmic term. In turn, the Flory-Huggins energy potential has to be re-defined and extended around and beyond the singular phase variable values. 
Also see a more recent work~\cite{Qin2019a} of SAV-based numerical algorithm for the surfactant model. 

Among the existing numerical methods, different approaches have different advantages. Here, we pay attention to the the convex splitting approach, originated from the pioneering work of Eyre \cite{Eyre1998Un}. The idea is that the energy admits a splitting into purely convex and concave parts, that is, $E=E_{c}-E_{e}$, where $E_{c}$ and $E_{e}$ are both convex. Such an idea has had wide applications in various gradient flow models,  including either first or second order accurate schemes. See the related works for the phase field crystal (PFC) and the modified PFC (MPFC) equation \cite{Wise2009An, Wang2010Global}, the epitaxial thin film growth models~\cite{Chen2014A, Feng2018A}, and the Cahn-Hilliard flow coupled with fluid motion~\cite{ChenConvergence, DiegelConvergence}, etc. 

Meanwhile, there have been extensive works of linear numerical schemes for the Cahn-Hilliard and epitaxial thin film equations~\cite{LiD2017, LiD2017b, LiD2016a}, in which stabilized implicit-explicit approach has been applied, and energy stability has been theoretically provided. In the case of a Flory-Huggins energy potential with singular logarithmic terms, the positivity-preserving property has been recently established in~\cite{Du2021a} for the corresponding Allen-Cahn equation, based on the maximum principle arguments. The advantage of such a linear scheme (corresponding to an explicit treatment of the nonlinear logarithmic term) is associated with the computational efficiency, so that a nonlinear Newton iteration is not required. On the other hand, this approach works very well for the positivity preserving analysis for the Allen-Cahn gradient flow, due to the availability of maximum principle, while its direct extension to the Cahn-Hilliard gradient flow would face a serious theoretical difficulty. In this paper, we design a uniquely solvable, positivity-preserving, unconditionally energy stable, and first order in time convergent scheme for the binary fluid-surfactant system, based on the convex-splitting idea, combined with the centered difference spatial approximation. For the theoretical analysis of the positivity-preserving property, we make use of the singular nature of the logarithmic function, and prove that such a singular nature prevents the numerical solution approaches the singular limit values, following similar ideas of in the analysis for the Cahn-Hilliard model~\cite{Chen2019Pos, dong20b, Wang2019A, DOng2019A}, as well as the one for the Poisson-Nernst-Planck system~\cite{LiuC20a, Qian20a}, droplet liquid film model~\cite{ZhangJ2021}, etc. In addition, an optimal rate convergence analysis is provided, which is the first such work for the surfactant model. The key difficulty in such an analysis is associated with the logarithmic potential term and the coupled term. In this article, we can make full use of the convexity of energy associated with the nonlinear terms to directly deal with all logarithmic terms and coupled terms, because the convexity of energy indicates the corresponding nonlinear error inner product is always non-negative. 

The rest of the paper is organized as follows. In Section~\ref{sec: model}, we give a brief introduction to the binary fluid-surfactant phase field model and state its energy law. 
In Section~\ref{sec: numerical scheme}, the numerical scheme is proposed and analyzed, and we prove the unique solvability, positivity-preserving property, 
as well as the energy stability. An optimal rate convergence estimate is also provided. 
Some numerical experiments are presented in Section~\ref{sec: numerical results}. 
Finally, some conclusions are made in Section~\ref{sec: conclusion} .
\section{The mathematical model: binary fluid-surfactant system} \label{sec: model}
In this paper, we consider the two-dimensional (2-D) binary fluid-surfactant system. 
With the domain given by $\Omega=\left(0,L_{x}\right)\times\left(0,L_{y}\right)$, the binary fluid-surfactant system is formulated as 
\begin{subequations}\label{bfs}
\begin{align}
\phi_{t} &=M_{1}\Delta \mu_{\phi},\label{eqn_phi}\\
\rho_{t} &= M_{2}\nabla\cdot \left(M\left(\rho\right)\nabla \mu_{\rho}\right),\label{eqn_rho}\\
\mu_{\phi} &= \frac{\delta G}{\delta \phi} = \frac{f' (\phi)}{\varepsilon} - \varepsilon \Delta \phi 
+ \alpha \nabla \cdot \Big( \frac{ \rho \nabla \phi }{| \nabla \phi |} \Big) ,\label{eqn_mu_phi}\\
\mu_{\rho} &= \frac{\delta G}{\delta \rho} = - \alpha | \nabla \phi | + \beta H' (\rho) ,\label{eqn_mu_rho}
\end{align}
\end{subequations}
with the periodic boundary condition and $M\left(\rho\right)=\rho\left(1-\rho\right)$.
The PDE system~\eqref{bfs} corresponds to the following free energy functional
\begin{equation} \label{bfs-energy-1} 
G\left(\phi,\rho\right) = \int_{\Omega}\Big(\frac{f\left( \phi\right)}{\varepsilon}
+\frac{\varepsilon}{2}|\nabla \phi|^{2}+\frac{\alpha}{2}\left(\rho-|\nabla \phi|\right)^{2}
+\beta H\left(\rho\right)\Big)\mathrm{d}\bm{x},
\end{equation}
where 
\begin{equation*}
f\left(\phi\right) = \frac{1}{4}\phi^{2}\left(1-\phi\right)^{2},\quad 
H\left(\rho\right) = \rho\ln\rho+\left(1-\rho\right)\ln\left(1-\rho\right) , 
\end{equation*}
and $\alpha,\beta, \varepsilon$ are all small positive parameters. In this paper we assume $M\left(\rho\right)=1$ and $M_{1}=M_{2}=\mathcal{M}$ for simplicity. 
Furthermore, to avoid the singularity in calculating the coupled energy $\left(\rho-|\nabla \phi|\right)^{2}$,  
we use $\sqrt{\phi_{x}^{2}+\phi_{y}^{2}+\delta^{2}}$ to approximate $|\nabla \phi|$. 
And also, we add diffuse terms $\frac{\eta^{2}}{2}|\Delta\phi|^{2}$ and $\frac{\xi}{2}|\nabla\rho|^{2}$ in the energy density, so that the new free energy functional becomes 
\begin{equation}\label{energy}
G_{new}\left(\phi,\rho\right) = \int_{\Omega}
\Big(\frac{f\left( \phi\right)}{\varepsilon}
+\frac{\varepsilon}{2}|\nabla \phi|^{2}
+\frac{\eta^{2}}{2}|\Delta\phi|^{2}
+\frac{\xi}{2}|\nabla\rho|^{2}
+\frac{\alpha}{2}\left(\rho-|\nabla \phi|\right)^{2}
+\beta H\left(\rho\right)\Big)\mathrm{d}\bm{x}.
\end{equation}
For simplicity, we still use $G$ to express $G_{new}$. In turn, the corresponding chemical potentials become  
\begin{subequations}\label{bfs-new}
\begin{align}
\mu_{\phi} &= \frac{\delta G}{\delta \phi} = \frac{f' (\phi)}{\varepsilon} - \varepsilon \Delta \phi 
+ \eta^2 \Delta^2 \phi + \alpha \nabla \cdot \Big( \frac{ \rho \nabla \phi }{| \nabla \phi |} \Big) ,\label{eqn_mu_phi-new}\\
\mu_{\rho} &= \frac{\delta G}{\delta \rho} = - \alpha | \nabla \phi | - \xi \Delta \rho + \beta H' (\rho) .\label{eqn_mu_rho-bew}
\end{align}
\end{subequations}

As always in the gradient system, the energy dissipation property is always valid: 
\begin{equation*}
\frac{\mathrm{d}}{\mathrm{d}t}G\left(\phi\left(t\right),\rho\left(t\right)\right)\leq 0,\quad t>0.
\end{equation*}
Besides, the appearance of the Flory-Huggins energy indicates a positivity property for the density variable, $0 < \rho <1$ at a point-wise level. Our primary aim is to develop a numerical scheme inheriting these properties at a theoretical level.

\begin{remark} 
Typically, in the context of physical models, the Dirichlet energy $\| \nabla u \|^2$ represents surface tension, whereas the higher order term  $\| \Delta u \|^2$ represents bending rigidity. In general, it may be assumed that all orders of the energy density are represented in the expansion of the energy  
$$
  E (u) = f(u) + a_0 u^2 + a_2 \| \nabla u \|^2 + a_4 \| \Delta u \|^2 + ... , 
$$
where f may be non-quadratic, and the coefficients $a_k$ may depend upon $u$, or derivatives of $u$ but are usually constants. On the other hand, it is typical to drop all higher order terms beyond those that are physically/mathematically necessary to make the PDE well posed. One would assume that the corresponding coefficients are sufficiently small so as to justify this. In most cases, it may be more reasonable to assume that $a_2 = 0$ (or is negligible) and only keep terms of order higher than two.   

For the binary fluid-surfactant system~\eqref{bfs}, combined with the physical energy~\eqref{bfs-energy-1}, we are able to construct a numerical scheme with an energy stability, while the optimal rate convergence analysis will face essential theoretical difficulties, due to the highly nonlinear and singular 1-Laplacian term involved for the variable $\phi$. To overcome this subtle difficulty, we add an additional bi-harmonic diffusion term for $\phi$, as well as a regular diffusion term for the $\rho$, in the energy representation~\eqref{energy}. As a result, both the energy stability and optimal rate convergence analysis could be theoretically justified, as will be demonstrated in the later section. In particular, an optimal rate convergence analysis will be the first such result for the binary fluid-surfactant system. Such an approach of adding higher order bi-harmonic diffusion process has been reported in many related nonlinear physical systems, in which the diffusion terms have played essential roles in the stability analysis, such as the planetary geostrophic equations of oceanic geophysical fluid model~\cite{LiuJ2006, STWW07, STW98, STW00}, etc.  
\end{remark}

\section{The numerical scheme} \label{sec: numerical scheme}

In this section, we present a convex-concave decomposition of the energy $\eqref{energy}$, and propose a convex splitting scheme based on such a decomposition. 
The unique solvability, energy stability, positivity-preserving property will be analyzed afterward.  

\subsection{The convex-concave decomposition of the energy}
\begin{lemma} 
Suppose that $\Omega=\left(0,L_{x}\right)\times\left(0,L_{y}\right)$ and $\phi,\rho: \Omega \to \mathbb{R}$ are periodic and sufficiently regular. 
Define the following energy functionals 
\begin{align*}
G_{c}\left(\phi,\rho\right)
=& \int_{\Omega}\frac{1}{4\varepsilon} (\phi-\frac{1}{2} )^{4}+\frac{1}{64\varepsilon}
    +\frac{\varepsilon}{2}|\nabla \phi|^{2}+\frac{\eta^{2}}{2}|\Delta\phi|^{2}
    +\beta H\left(\rho\right)+\frac{\xi}{2}|\nabla\rho|^{2}\\
    &+\frac{\alpha}{2}\left\{\left(\rho-|\nabla^\delta \phi|\right)^{2}
    + (\sqrt{2}-1 )\rho^{2}+\frac{1}{\delta}|\nabla \phi|^{2}\right\}\mathrm{d}\bm{x},\\
G_{e}\left(\phi,\rho\right)
=& \int_{\Omega}\frac{1}{8\varepsilon} (\phi-\frac{1}{2} )^{2}
    +\frac{\alpha}{2}\left\{ (\sqrt{2}-1 )\rho^{2}
    +\frac{1}{\delta}|\nabla \phi |^{2}\right\}\mathrm{d}\bm{x} , 
\end{align*}
with $|\nabla^\delta \phi| := ( | \nabla \phi |^2 + \delta^2 )^\frac12$.   
Then $G_{c}\left(\phi,\rho\right)$ and $G_{e}\left(\phi,\rho\right)$ are both convex 
with respect to $\phi$ and $\rho$, with $G\left(\phi,\rho\right)=G_{c}\left(\phi,\rho\right)-G_{e}\left(\phi,\rho\right)$.
\end{lemma}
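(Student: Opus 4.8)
The plan is to first verify the algebraic identity $G=G_{c}-G_{e}$, and then to prove convexity of $G_{c}$ and $G_{e}$ separately by checking that each integrand is a convex function of the pointwise ``jet'' variables $(\phi,\nabla\phi,\Delta\phi,\rho,\nabla\rho)$. Since these variables depend linearly on $(\phi,\rho)$ and integration preserves convexity, pointwise convexity of the integrand transfers to the functionals.

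For the identity, the only computation that is not a direct cancellation is the $\phi$-polynomial part. Setting $u=\phi-\tfrac12$, I would expand $f(\phi)=\tfrac14\phi^{2}(1-\phi)^{2}=\tfrac14(\tfrac14-u^{2})^{2}=\tfrac1{64}-\tfrac18u^{2}+\tfrac14u^{4}$, which yields $\tfrac{1}{4\varepsilon}(\phi-\tfrac12)^{4}+\tfrac1{64\varepsilon}-\tfrac1{8\varepsilon}(\phi-\tfrac12)^{2}=f(\phi)/\varepsilon$. Every other term of $G_{c}$ reproduces a term of $G$ (with $|\nabla\phi|$ replaced by $|\nabla^{\delta}\phi|$), while the two auxiliary blocks $\tfrac\alpha2(\sqrt2-1)\rho^{2}$ and $\tfrac{\alpha}{2\delta}|\nabla\phi|^{2}$ are exactly the ones subtracted in $G_{e}$ and hence cancel.

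Convexity of $G_{e}$ is immediate, being a sum of the convex quadratics $\tfrac1{8\varepsilon}(\phi-\tfrac12)^{2}$, $\tfrac\alpha2(\sqrt2-1)\rho^{2}$ and $\tfrac{\alpha}{2\delta}|\nabla\phi|^{2}$. In $G_{c}$ most terms are likewise manifestly convex: $\tfrac1{4\varepsilon}(\phi-\tfrac12)^{4}$ has nonnegative second derivative; $\tfrac\varepsilon2|\nabla\phi|^{2}$, $\tfrac{\eta^{2}}2|\Delta\phi|^{2}$, $\tfrac\xi2|\nabla\rho|^{2}$, $\tfrac{\alpha}{2\delta}|\nabla\phi|^{2}$ are convex quadratic forms in the derivative variables; and $H$ is convex on $(0,1)$ since $H''(\rho)=\tfrac1\rho+\tfrac1{1-\rho}>0$.

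The heart of the matter, and the step I expect to be the main obstacle, is the joint convexity in $(\phi_{x},\phi_{y},\rho)$ of the coupling block $W=\tfrac\alpha2\big[(\rho-p)^{2}+(\sqrt2-1)\rho^{2}+\tfrac1\delta(\phi_{x}^{2}+\phi_{y}^{2})\big]$ with $p=(\phi_{x}^{2}+\phi_{y}^{2}+\delta^{2})^{1/2}$. By itself $(\rho-p)^{2}$ is not convex, because the convex but non-monotone map $s\mapsto(\rho-s)^{2}$ is composed with the convex $p$; this is exactly the convexity gap left open in earlier work, and the added terms $\tfrac{\alpha}{2\delta}|\nabla\phi|^{2}$, $\tfrac\alpha2(\sqrt2-1)\rho^{2}$ together with the bound $\rho\in(0,1)$ are what restore it. I would compute the Hessian of $W$ and, for a test vector $v=(v_{1},v_{2},v_{3})$, reduce the quadratic form to
\[
v^{\mathsf T}(\nabla^{2}W)v=\alpha\Big[(1+\tfrac1\delta)S+\sqrt2\,v_{3}^{2}-\tfrac2p\,w\,v_{3}-\tfrac{\rho}{p^{3}}z^{2}-\tfrac{\rho\delta^{2}}{p^{3}}S\Big],
\]
after introducing $S=v_{1}^{2}+v_{2}^{2}$, $w=\phi_{x}v_{1}+\phi_{y}v_{2}$ and $z=\phi_{y}v_{1}-\phi_{x}v_{2}$, which satisfy $w^{2}+z^{2}=r^{2}S$ with $r^{2}=\phi_{x}^{2}+\phi_{y}^{2}=p^{2}-\delta^{2}$. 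Using $z^{2}\le r^{2}S$ and $\rho<1$ to absorb the two negative $\rho/p^{3}$-terms, the bracket is bounded below by $(1+\tfrac1\delta-\tfrac\rho p)S+\sqrt2\,v_{3}^{2}-\tfrac2p\,w\,v_{3}$; since $\rho<1$ and $p\ge\delta$ give $\rho/p<1/\delta$, the coefficient $1+\tfrac1\delta-\tfrac\rho p$ exceeds $1$, and a single Young inequality on the cross term (using $|w|\le r\sqrt S$ and $r^{2}/p^{2}<1$) then makes the whole expression nonnegative. This establishes positive semidefiniteness of the Hessian on the admissible range $\rho\in(0,1)$ and completes the convexity of $G_{c}$.
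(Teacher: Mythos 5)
Your proposal is correct, and its skeleton matches the paper's own proof: verify the algebraic identity by expanding $f$ around $\phi=\tfrac12$, observe that all terms except the surfactant coupling block are separably convex in the jet variables $(\phi,\nabla\phi,\Delta\phi,\rho,\nabla\rho)$, reduce the remaining issue to the pointwise Hessian of the coupling term in $(\phi_x,\phi_y,\rho)$, and pass from pointwise convexity to convexity of the functionals by integration. The difference is in how the key step is closed. The paper writes down the same $3\times 3$ Hessian (its matrix $H(\hat{\bm v})$ in~\eqref{Hessian-H-1}) and then asserts, without any computation, that for $0<\rho<1$ ``a careful application of calculus reveals'' that the matrix is diagonally dominant, hence nonnegative definite. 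You instead prove positive semidefiniteness directly: your Hessian entries and the quadratic-form reduction are correct, the rotation identity $w^{2}+z^{2}=r^{2}S$ collapses the two negative $\rho/p^{3}$ terms into $-(\rho/p)S$, and then $\rho<1$, $p\ge\delta$ and a single Young inequality (with $r^{2}/p^{2}<1$ and $\sqrt2-1>0$) finish the job. This is a complete, checkable argument where the paper offers only an assertion, and it has the added merit of isolating exactly which auxiliary terms rescue convexity: the $\tfrac{\alpha}{2\delta}|\nabla\phi|^{2}$ block supplies the $1/\delta$ that dominates $\rho/p$, the $(\sqrt2-1)\rho^{2}$ block supplies the coefficient absorbed by the cross term, and the constraint $\rho\in(0,1)$ enters in both places --- precisely the convexity gap in earlier work that the paper's introduction alludes to. One small point to record in a final write-up: as in the paper, convexity of $G_{c}$ is asserted on the admissible set where $0<\rho<1$ (where $H$ is defined and your Hessian bound applies), not on all of function space.
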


\begin{proof}
We focus on the convexity analysis of $G_{c}\left(\phi,\rho\right)$ 
and $G_{e}\left(\phi,\rho\right)$. Let 
\begin{align*}
e_{c}\left(\phi,\phi_{x},\phi_{y},\Delta\phi,\rho,\rho_{x},\rho_{y}\right)
=& \frac{1}{4\varepsilon} (\phi-\frac{1}{2} )^{4}+\frac{1}{64\varepsilon}
    +\frac{\varepsilon}{2}|\nabla \phi|^{2}+\frac{\eta^{2}}{2}|\Delta\phi|^{2}
    +\beta H\left(\rho\right)+\frac{\xi}{2}|\nabla\rho|^{2}\\
    &+\frac{\alpha}{2}\left\{\left(\rho-|\nabla \phi|\right)^{2}
    + (\sqrt{2}-1 )\rho^{2}+\frac{1}{\delta}|\nabla \phi|^{2}\right\},\\
e_{e}\left(\phi,\phi_{x},\phi_{y},\Delta\phi,\rho,\rho_{x},\rho_{y}\right)
=& \frac{1}{8\varepsilon} (\phi-\frac{1}{2} )^{2}
    +\frac{\alpha}{2}\left\{ (\sqrt{2}-1 )\rho^{2}+\frac{1}{\delta}|\nabla \phi|^{2}\right\} . 
\end{align*}
We also denote 
\begin{subequations}
\begin{align*}
e_{c_{1}}\left(\bm{v}\right)
& \triangleq e_{c1}\left(\phi,\phi_{x},\phi_{y},\Delta\phi,\rho,\rho_{x},\rho_{y}\right)
    =\frac{1}{4\varepsilon} (\phi-\frac{1}{2} )^{4}+\frac{1}{64\varepsilon}
    +\frac{\varepsilon}{2}|\nabla \phi|^{2}+\frac{\eta^{2}}{2}|\Delta\phi|^{2}
    +\beta H\left(\rho\right)+\frac{\xi}{2}|\nabla\rho|^{2},\\
e_{c_{2}}\left(\bm{v}\right)
& \triangleq e_{c2}\left(\phi,\phi_{x},\phi_{y},\Delta\phi,\rho,\rho_{x},\rho_{y}\right)
    =\frac{\alpha}{2}\left\{\left(\rho-|\nabla^\delta \phi|\right)^{2}
    + (\sqrt{2}-1 )\rho^{2}+\frac{1}{\delta}|\nabla \phi|^{2}\right\},\\
e_{e}\left(\bm{v}\right)
&\triangleq e_{e}\left(\phi,\phi_{x},\phi_{y},\Delta\phi,\rho,\rho_{x},\rho_{y}\right)
    =\frac{1}{8\varepsilon} (\phi-\frac{1}{2} )^{2}
    +\frac{\alpha}{2}\left\{ (\sqrt{2}-1 )\rho^{2}+\frac{1}{\delta}|\nabla \phi|^{2}\right\},
\end{align*}
\end{subequations}
where 
\begin{equation*}
{\bm{v}}=\left(v_{1},v_{2},v_{3},v_{4},v_{5},v_{6},v_{7}\right)
\triangleq\left(\phi,\phi_{x},\phi_{y},\Delta\phi,\rho,\rho_{x},\rho_{y}\right) . 
\end{equation*} 
Then we have 
\begin{equation*}
e_{c}\left(\bm{v}\right)=e_{c_{1}}\left(\bm{v}\right)+e_{c_{2}}\left(\bm{v}\right), \quad 
G_{c}\left(\phi,\rho\right)=\int_{\Omega}e_{c}\left(\bm{v}\right)\mathrm{d}\bm{x}, \quad 
G_{e}\left(\phi,\rho\right)=\int_{\Omega}e_{e}\left(\bm{v}\right)\mathrm{d}\bm{x},
\end{equation*}
and the following inequalities are derived: 
\begin{subequations}
\begin{align*}
\partial_{v_{1}}^{2}e_{c_{1}}\left(v_1,v_{2},v_{3},v_{4},v_{5},v_{6},v_{7}\right)
&=\frac{3}{\varepsilon} (\phi-\frac{1}{2} )^{2}\geq 0,\\
\partial_{v_{2}}^{2}e_{c_{1}}\left(v_1,v_{2},v_{3},v_{4},v_{5},v_{6},v_{7}\right)&=\varepsilon>0,\\
\partial_{v_{3}}^{2}e_{c_{1}}\left(v_1,v_{2},v_{3},v_{4},v_{5},v_{6},v_{7}\right)&=\varepsilon>0,\\
\partial_{v_{4}}^{2}e_{c_{1}}\left(v_1,v_{2},v_{3},v_{4},v_{5},v_{6},v_{7}\right)&=\eta^{2}> 0,\\
\partial_{v_{5}}^{2}e_{c_{1}}\left(v_1,v_{2},v_{3},v_{4},v_{5},v_{6},v_{7}\right)
&=\frac{\beta}{\rho\left(1-\rho\right)}>0,\\
\partial_{v_{6}}^{2}e_{c_{1}}\left(v_1,v_{2},v_{3},v_{4},v_{5},v_{6},v_{7}\right)&=\xi> 0,\\
\partial_{v_{7}}^{2}e_{c_{1}}\left(v_1,v_{2},v_{3},v_{4},v_{5},v_{6},v_{7}\right)&=\xi> 0,\\
\partial_{v_{1}}^{2}e_{e}\left(v_1,v_{2},v_{3},v_{4},v_{5},v_{6},v_{7}\right)&=\frac{1}{4\varepsilon}>0,\\
\partial_{v_{2}}^{2}e_{e}\left(v_1,v_{2},v_{3},v_{4},v_{5},v_{6},v_{7}\right)&=\frac{\alpha}{\delta}>0,\\
\partial_{v_{3}}^{2}e_{a}\left(v_1,v_{2},v_{3},v_{4},v_{5},v_{6},v_{7}\right)&=\frac{\alpha}{\delta}>0,\\
\partial_{v_{4}}^{2}e_{c_{1}}\left(v_1,v_{2},v_{3},v_{4},v_{5},v_{6},v_{7}\right)&=0,\\
\partial_{v_{5}}^{2}e_{e}\left(v_1,v_{2},v_{3},v_{4},v_{5},v_{6},v_{7}\right)&= (\sqrt{2}-1 )\alpha>0.
\end{align*}
\end{subequations}
These facts imply that both $e_{c_{1}}\left(\bm{v}\right)$ and $e_{e}\left(\bm{v}\right)$ are convex. 
To get the convexity of $e_{c_{2}}\left(\bm{v}\right)$, we have to analyze the Hessian matrix of $e_{c_{2}}\left(\hat{\bm{v}}\right)$ as follows, 
where $\hat{\bm{v}}=\left(v_{2},v_{3},v_{5}\right)$: 
\begin{equation}
H\left(\hat{\bm{v}}\right)=\alpha\left[
\begin{matrix}
\displaystyle
\sqrt{2}& -\frac{v_{2}}{\sqrt{v_{2}^{2}+v_{3}^{2}+\delta^2}} 
& -\frac{v_{3}}{\sqrt{v_{2}^{2}+v_{3}^{2}+\delta^2}}\\
-\frac{v_{2}}{\sqrt{v_{2}^{2}+v_{3}^{2}+\delta^2}} 
& -\frac{v_{5}\left(v_3^{2}+\delta^2\right)}{\left(v_{2}^{2}+v_{3}^{2}+\delta^{2}\right)^{\frac{3}{2}}}
+\frac{1}{\delta}+1 &\frac{v_{2}v_{3}v_{5}}{\left(v_{2}^{2}+v_{3}^{2}+\delta^{2}\right)^{\frac{3}{2}}}\\
-\frac{v_{3}}{\sqrt{v_{2}^{2}+v_{3}^{2}+\delta^2}} 
&\frac{v_{2}v_{3}v_{5}}{\left(v_{2}^{2}+v_{3}^{2}+\delta^{2}\right)^{\frac{3}{2}}}
&-\frac{v_{5}\left(v_2^{2}+\delta^2\right)}{\left(v_{2}^{2}+v_{3}^{2}+\delta^{2}\right)^{\frac{3}{2}}}
+\frac{1}{\delta}+1
\end{matrix}
\right] .  \label{Hessian-H-1} 
\end{equation}
Since $0 < \rho < 1$, i.e. $v_{5}\in\left(0,1\right)$, a careful application of calculus reveals that $H\left(\hat{\bm{v}}\right)$ is diagonally dominated, so that it is non-negative definite. This in turn indicates the convexity of $e_{c_{2}}$. Therefore, we obtain the following inequality, according to the definition of convex function: 
\begin{subequations}
\begin{align}
&e_{c}\left(\lambda\bm{w}+\left(1-\lambda\right)\bm{v}\right)\leq 
\lambda e_{c}\left(\bm{w}\right)+\left(1-\lambda\right)e_{c}\left(\bm{v}\right),\label{convex_c}\\
&e_{e}\left(\lambda\bm{w}+\left(1-\lambda\right)\bm{v}\right)\leq 
\lambda e_{e}\left(\bm{w}\right)+\left(1-\lambda\right)e_{e}\left(\bm{v}\right),\label{convex_a}
\end{align}
\end{subequations}
where $\lambda\in\left(0,1\right), \bm{w}, \bm{v}\in\mathbb{R}^{7}.$
Integrating both sides of $\eqref{convex_c}$ and $\eqref{convex_a}$ leads to 
\begin{equation*}
G_{c}\left(\lambda \phi_{1}+\left(1-\lambda\right) \phi_{2}, 
\lambda \rho_{1}+\left(1-\lambda\right) \rho_{2}\right)\leq 
\lambda G_{c}\left(\phi_{1},\rho_{1}\right)
+\left(1-\lambda\right)G_{c}\left(\phi_{2},\rho_{2}\right),
\end{equation*}
and 
\begin{equation*}
G_{e}\left(\lambda\phi_{1}+\left(1-\lambda\right) \phi_{2}, 
\lambda \rho_{1}+\left(1-\lambda\right) \rho_{2}\right)\leq 
\lambda G_{e}\left(\phi_{1},\rho_{1}\right)
+\left(1-\lambda\right)G_{e}\left(\phi_{2},\rho_{2}\right),
\end{equation*}
which indicates that both $G_{c}\left(\phi,\rho\right)$ and $G_{e}\left(\phi,\rho\right)$ 
are convex with respect to $\phi$ and $\rho$.
\end{proof}

As a generalization of the theorem presented in~\cite{Wise2009An}, the following lemma is the foundation of energy stability for binary fluid-surfactant, 
or more generally, for two variable functional. The proof is similar to \cite{Wise2009An}, so we skip it for the sake of brevity.
 
\begin{lemma}
Assume $\phi,\varphi,\rho,\psi:\Omega\to\mathbb{R}$ are periodic and smooth enough.
If $G=G_{c}-G_{e}$ gives a convex-concave decomposition, then we have 
\begin{equation}\label{une}
G\left(\phi,\rho\right)-G\left(\varphi,\psi\right)\leq
\left(\delta_{\phi}G_{c}\left(\phi,\rho\right)-\delta_{\phi}G_{e}\left(\varphi,\psi\right),
\phi-\varphi\right)_{L^{2}}+
\left(\delta_{\rho}G_{c}\left(\phi,\rho\right)-\delta_{\rho}G_{e}\left(\varphi,\psi\right),
\rho-\psi\right)_{L^{2}},
\end{equation}
where $\delta$ denotes the variational derivative.
\end{lemma}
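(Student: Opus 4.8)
The plan is to deduce the two-variable estimate from the elementary first-order characterization of convexity (the supporting-hyperplane, or subgradient, inequality), applied separately to the convex functionals $G_c$ and $G_e$ furnished by the previous lemma, and then to add the two resulting inequalities. Throughout I set $u = (\phi, \rho)$ and $w = (\varphi, \psi)$, and I write the increment as $u - w = (\phi - \varphi, \rho - \psi)$; the periodicity and smoothness hypotheses guarantee that all variational derivatives appearing below are well defined and that the $L^2$ pairings make sense.

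The key device is to restrict a convex functional to the segment joining $w$ and $u$. For a convex functional $F$ (either $G_c$ or $G_e$), I would introduce the scalar function
\[
g(t) = F\big(w + t(u - w)\big), \qquad t \in [0,1],
\]
which is convex in $t$ because $F$ is convex and the argument is affine in $t$. Differentiating under the integral sign identifies $g'(t) = (\delta_\phi F, \phi - \varphi)_{L^2} + (\delta_\rho F, \rho - \psi)_{L^2}$, with the variational derivatives evaluated at $w + t(u-w)$. Since $g'$ is nondecreasing on $[0,1]$, one has $g'(0) \le g(1) - g(0) \le g'(1)$, which reads
\[
(\delta_\phi F(w), \phi - \varphi)_{L^2} + (\delta_\rho F(w), \rho - \psi)_{L^2} \le F(u) - F(w) \le (\delta_\phi F(u), \phi - \varphi)_{L^2} + (\delta_\rho F(u), \rho - \psi)_{L^2}.
\]

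To finish, I would apply the upper bound to $F = G_c$, evaluating its variational derivatives at $u = (\phi,\rho)$, and the lower bound to $F = G_e$, evaluating its variational derivatives at $w = (\varphi,\psi)$; negating the latter and adding the two inequalities, while using $G = G_c - G_e$, produces exactly
\[
G(\phi,\rho) - G(\varphi,\psi) \le (\delta_\phi G_c(\phi,\rho) - \delta_\phi G_e(\varphi,\psi), \phi - \varphi)_{L^2} + (\delta_\rho G_c(\phi,\rho) - \delta_\rho G_e(\varphi,\psi), \rho - \psi)_{L^2},
\]
which is the assertion. Note the deliberate asymmetry in the evaluation points, the convex part at the new state and the concave part at the old state, which is precisely what makes the convex-splitting scheme unconditionally energy stable.

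The genuine content is confined to the convexity-to-tangent-line step; everything else is bookkeeping. Accordingly, the one point deserving care is the differentiation under the integral sign that identifies $g'$ with the $L^2$ pairing of variational derivatives, since the energy density contains the singular Flory-Huggins term $\beta H(\rho)$ and the regularized gradient $|\nabla^\delta \phi|$. However, the hypotheses of periodicity and sufficient regularity, together with $0 < \rho < 1$ (which keeps $H'$ finite) and $\delta > 0$ (which keeps $|\nabla^\delta \phi|$ smooth), remove any singularity along the segment, so the interchange is justified and the proof reduces, as in the scalar case of \cite{Wise2009An}, to the two elementary tangent-line inequalities above.
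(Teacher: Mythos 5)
Your proposal is correct and takes essentially the approach the paper intends: the paper skips the proof, stating only that it parallels the one-variable result of Wise et al., and that result is proved by exactly the first-order convexity (tangent-line) inequality you use, with the convex part evaluated at the new state and the concave part at the old state. Your additional care about differentiating under the integral sign (using $0<\rho<1$ and $\delta>0$ to keep the Flory--Huggins and regularized gradient terms smooth along the segment) is a sound and welcome refinement of that standard argument.
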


Given a time step $\Delta t>0$, we construct the discrete-time, continuous-space scheme of the binary fluid-surfactant system~$\eqref{bfs}$ as follows
\begin{subequations}\label{CS}
\begin{align}
\frac{\phi^{n+1}-\phi^{n}}{\Delta t} &=\mathcal{M}\Delta \mu_{\phi}^{n+1},\\
\frac{\rho^{n+1}-\rho^{n}}{\Delta t} &= \mathcal{M}\Delta \mu_{\rho}^{n+1},\\
\mu_{\phi}^{n+1} 
&= \delta_{\phi} G_{c}\left(\phi^{n+1},\rho^{n+1}\right)-\delta_{\phi}G_{e}\left(\phi^{n},\rho^{n}\right),\\
\mu_{\rho}^{n+1} 
&= \delta_{\rho} G_{c}\left(\phi^{n+1},\rho^{n+1}\right)-\delta_{\rho}G_{e}\left(\phi^{n},\rho^{n}\right).
\end{align}
\end{subequations}

\subsection{The spatial discretization and the fully discrete numerical scheme}
The centered difference approximation is applied to discretize the space. 
Here we first recall some basic notations of this methodology, and we use the similar notations and results for some discrete functions and operators introduced in~
\cite{Guo2016An, Wise2009An}. Let $\Omega=(0,L_x)\times(0,L_y)$, where for simplicity, we assume $L_x=L_y:=L>0$. Let $N\in \mathbb{N}$ be given, 
and define the grid mesh size $h:=L/N$. Such a uniform mesh size assumption is only for simplicity of presentation.
We define the following two uniform, infinite node sets with grid spacing $h>0$:
\begin{equation}
E:=\{p_{i+\frac{1}{2}}~|~i\in\mathbb{Z}\},\quad C:=\{p_{i}~|~i\in\mathbb{Z}\},
\end{equation}
where $p_i=p(i):=(i-\frac{1}{2})\cdot h$. 
Consider the following 2-D discrete $N^{2}$-peroidic function spaces:
\begin{align*}
&\mathcal{C}_{per}:=\{\nu: C\times C\to\mathbb{R}|\nu_{i,j}
=\nu_{i+\alpha N,j+\beta N},~\forall i,j,\alpha,\beta \in \mathbb{Z}\},\\
&\mathcal{E}_{per}^{x}:=\{\nu: E\times C\to\mathbb{R}|\nu_{i+\frac{1}{2},j}
=\nu_{i+\frac{1}{2}+\alpha N,j+\beta N},~\forall i,j,\alpha,\beta \in \mathbb{Z}\}.
\end{align*}
The spaces $\mathcal{E}_{per}^{y}$ can be analogously defined. Here we use the Greek symbols $\nu_{i,j}=\nu(p_{i},p_{j})$, et cetera. 
The functions of $\mathcal{C}_{per}$ are called $cell-centered$ functions, while the functions of $\mathcal{E}_{per}^{x}$ and $\mathcal{E}_{per}^{y}$ are called 
$x-direction$ and $y-direction$ $edge-centered$ functions, respectively. 
We also define the mean zero space
\begin{equation*}
\mathring{\mathcal{C}}_{per}:=\left\{\nu \in \mathcal{C}_{per}~|~
0=\bar{\nu}:=\frac{h^{2}}{|\Omega|}\sum_{i,j=1}^{m}\nu_{i,j}\right\}.
\end{equation*}
Additionally, we denote $\vec{\mathcal{E}}_{per}:=\mathcal{E}_{per}^{x} \times \mathcal{E}_{per}^{y}$.

Next, the important difference and average operators are introduced on the function spaces:
\begin{align*}
&A_{x}\nu_{i+\frac{1}{2},j}:=\frac{1}{2}(\nu_{i+1,j}+\nu_{i,j}),\quad 
 D_{x}\nu_{i+\frac{1}{2},j}:=\frac{1}{h}(\nu_{i+1,j}-\nu_{i,j}),\\
&A_{y}\nu_{i,j+\frac{1}{2}}:=\frac{1}{2}(\nu_{i,j+1}+\nu_{i,j}),\quad
 D_{y}\nu_{i,j+\frac{1}{2}}:=\frac{1}{h}(\nu_{i,j+1}-\nu_{i,j}),
\end{align*}
with 
$A_{x},D_{x}:\mathcal{C}_{per}\to \mathcal{E}_{per}^{x}$ and 
$A_{y},D_{y}:\mathcal{C}_{per}\to \mathcal{E}_{per}^{y}$. Likewise,
\begin{align*}
&a_{x}\nu_{i,j}:=\frac{1}{2}(\nu_{i+\frac{1}{2},j}+\nu_{i-\frac{1}{2},j}),\quad 
 d_{x}\nu_{i,j}:=\frac{1}{h}(\nu_{i+\frac{1}{2},j}-\nu_{i-\frac{1}{2},j}),\\
&a_{y}\nu_{i,j}:=\frac{1}{2}(\nu_{i,j+\frac{1}{2}}+\nu_{i,j-\frac{1}{2}}),\quad
 d_{y}\nu_{i,j}:=\frac{1}{h}(\nu_{i,j+\frac{1}{2}}-\nu_{i,j-\frac{1}{2}}),
\end{align*}
with $a_{x},d_{x}:\mathcal{C}_{per}\to \mathcal{C}_{per}^{x}$ and $a_{y},d_{y}:\mathcal{C}_{per}\to \mathcal{C}_{per}^{y}$. 
The discrete gradient $\nabla_{h}:\mathcal{C}_{per}\to\vec{\mathcal{E}}_{per}$ is defined via 
\begin{equation*}
\nabla_{h}\nu_{i,j}=(D_{x}\nu_{i+\frac{1}{2},j},D_{y}\nu_{i,j+\frac{1}{2}}),
\end{equation*}
and the discrete divergence $\nabla_{h}\cdot:\mathcal{E}_{per}\to\vec{\mathcal{C}}_{per}$ becomes 
\begin{equation*}
\nabla_{h}\cdot \vec{f}_{i,j}=d_{x}f_{i,j}^x +d_{y }f_{i,j}^y , \quad 
\mbox{for} \, \, \, \vec{f}=(f^{x},f^{y}) \in \vec{\mathcal{E}}_{per} . 
\end{equation*}
The standard 2-D discrete Laplacian, $\Delta_{h}:\mathcal{C}_{per}\to\mathcal{C}_{per}$, is given by
\begin{align*}
\Delta_{h}\nu_{i,j}
:&=\nabla_{h}\cdot(\nabla_{h} \nu)_{i,j}
  =d_{x}(D_{x}\nu)_{i,j}+d_{y}(D_{y}\nu)_{i,j}\\
 &=\frac{1}{h^{2}}(\nu_{i+1,j}+\nu_{i-1,j}+\nu_{i,j+1}+\nu_{i,j-1}-4\nu_{i,j}).
\end{align*}
More generally, if $\mathcal{D}$ is a periodic $scalar$ function that is defined at all of the face center points and $\vec{f} \in \vec{\mathcal{E}}_{per}$, 
assuming point-wise multiplication, we may define
\begin{equation*}
\nabla_{h}\cdot(\mathcal{D}\vec{f})_{i,j}
=d_{x}(\mathcal{D}f^{x})_{i,j}+d_{y}(\mathcal{D}f^{y})_{i,j}.
\end{equation*}
Specifically, if $\nu\in\mathcal{C}_{per}$, then 
$\nabla_{h}\cdot(\mathcal{D}\nabla_{h}):\mathcal{C}_{per}\to\mathcal{C}_{per}$ 
is defined point-wise via
\begin{equation*}
\nabla_{h}\cdot(\mathcal{D}\nabla_h\nu)_{i,j}
=d_{x}(\mathcal{D}D_{x}\nu)_{i,j}+d_{y}(\mathcal{D}D_{y})_{i,j}.
\end{equation*}

Now we are ready to introduce the following grid inner products:
\begin{align*}
&\langle\nu,\xi\rangle_{\Omega}:=h^{2}\sum_{i,j=1}^{N}\nu_{i,j}\xi_{i,j},\quad
 \nu, \xi \in \mathcal{C}_{per}, \\
&[\nu,\xi]_{x}:=\langle a_{x}(\nu\xi),1\rangle_{\Omega},\quad 
 \nu, \xi \in \mathcal{E}_{per}^{x},\\
&[\nu,\xi]_{y}:=\langle a_{y}(\nu\xi),1\rangle_{\Omega},\quad 
 \nu, \xi \in \mathcal{E}_{per}^{y}.\\
&[\vec{f}_{1},\vec{f}_{2}]_{\Omega}
:=[f_{1}^{x},f_{2}^{x}]_{x}
 +[f_{1}^{y},f_{2}^{y}]_{y},\quad 
\vec{f}_{i}=(f_{i}^{x},f_{i}^{y}) \in \vec{\mathcal{E}}_{per},\quad i=1,2.
\end{align*}

We define the following norms for cell-centered functions. 
If $\nu\in\mathcal{C}_{per}$, then $\|\nu\|_{2}^{2}:=\langle\nu,\nu\rangle_{\Omega}$; $\|\nu\|_{p}^{p}:=\langle|\nu|^{p},1\rangle_{\Omega}$, for $1\leq p\leq \infty$, 
and $\|\nu\|_{\infty}:=\max\limits_{1\leq i,j\leq N}|\nu_{i,j}|$. The norms of the gradient are defined as follows: for $\nu\in\mathcal{C}_{per}$,
\begin{equation*}
\|\nabla_{h}\nu\|_{2}^{2}
:=[\nabla_{h}\nu,\nabla_{h}\nu]_{\Omega}
 =[D_{x}\nu,D_{x}\nu]_{x}
 +[D_{y}\nu,D_{y}\nu]_{y},
\end{equation*}
and, more generally, for $1\leq p \leq \infty$,
\begin{equation*}
\|\nabla_{h}\nu\|_{p}
:=\left(
[|D_{x}\nu|^{p},1]_{x}+
[|D_{y}\nu|^{p},1]_{y}\right)^{\frac{1}{p}},
\end{equation*}
Higher order norms can be similarly introduced; for example,
\begin{equation*}
\|\nu\|_{H_{h}^{1}}^{2}:=\|\nu\|_{2}^{2}+\|\nabla_{h}\nu\|_{2}^{2},\quad 
\|\nu\|_{H_{h}^{2}}^{2}:=\|\nu\|_{H_{h}^{1}}^{2}+\|\Delta_{h}\nu\|_{2}^{2}.
\end{equation*}

To facilitate the convergence analysis, we need to introduce a discrete analogue of the space $H_{per}^{-1}(\Omega)$, as outlined in \cite{Wang2011Energy}. 
Suppose that $\mathcal{D}$ is a positive, periodic scalar function defined at all of the face center points. 
For any $\phi\in\mathcal{C}_{per}$, there exists a unique $\psi\in\mathring{\mathcal{C}}_{per}$, that solves
\begin{equation}
\mathcal{L}_{\mathcal{D}}(\psi):=-\nabla_{h}\cdot\left(\mathcal{D}\nabla_{h}\psi\right)=\phi-\bar{\phi},
\end{equation}
where, recall, $\bar{\phi}:=|\Omega|^{-1}\langle\phi,1\rangle_{\Omega}$. We equip this space with a bilinear form: for any $\phi_{1},\phi_{2}\in\mathring{\mathcal{C}}_{per}$, define
\begin{equation}
\langle\phi_{1},\phi_{2}\rangle_{\mathcal{L}_{\mathcal{D}}^{-1}}
:=[\mathcal{D}\nabla_{h}\psi_{1},\nabla_{h}\psi_{2}]_{\Omega},
\end{equation}
where $\psi_{i}\in\mathring{\mathcal{C}}_{per}$ is the unique solution to 
\begin{equation}
\mathcal{L}_{\mathcal{D}}(\psi_{i}):=-\nabla_{h}\cdot(\mathcal{D}\nabla_{h}\psi_{i})=\phi_{i},\quad i=1,2.
\end{equation}
The following identity is easy to prove via summation-by-parts:
\begin{equation}
\langle\phi_{1},\phi_{2}\rangle_{\mathcal{L}_{\mathcal{D}}^{-1}}
=\langle\phi_{1},\mathcal{L}_{\mathcal{D}}^{-1}(\phi_{2})\rangle_{\Omega}
=\langle\mathcal{L}_{\mathcal{D}}^{-1}(\phi_{1}),\phi_{2}\rangle_{\Omega},
\end{equation}
and since $\mathcal{L}_{\mathcal{D}}$ is symmetric positive definite, $\langle\cdot,\cdot\rangle_{\mathcal{L}_{\mathcal{D}}^{-1}}$ is 
an inner product on $\mathring{\mathcal{C}}_{per}$. When $\mathcal{D}\equiv 1$, we drop the subscript and write $\mathcal{L}_{1} = \mathcal{L}$, 
and in this case we usually write $\langle\cdot,\cdot\rangle_{\mathcal{L}_{\mathcal{D}}^{-1}}=:\langle\cdot,\cdot\rangle_{-1,h}$. 
In the general setting, the norm associated to this inner product is denoted as 
$\|\phi\|_{\mathcal{L}_{\mathcal{D}}^{-1}}:=\sqrt{\langle\phi,\phi\rangle_{\mathcal{L}_{\mathcal{D}}^{-1}}}$,
 for all $\phi \in \mathring{\mathcal{C}}_{per}$, but, if $\mathcal{D}\equiv 1$, we write $\|\cdot\|_{\mathcal{L}_{\mathcal{D}}^{-1}}=:\|\cdot\|_{-1,h}$.

With the preparations above, we turn to discuss the discrete energy and the fully discrete scheme. 
Define the discrete energies $E,E_{c},E_{e}:\mathcal{C}_{per}\times\mathcal{C}_{per}\to\mathbb{R}$ as
\begin{equation*}
\begin{aligned} 
E\left(\phi,\rho\right)= & 
h^{2}\sum_{i,j=1}^{N} \Big(\frac{f\left(\phi_{i,j}\right)}{\varepsilon}
+\frac{\varepsilon}{2}\left|\nabla_{h}\phi_{i,j}\right|^{2}
+\frac{\eta^{2}}{2}\left|\Delta_{h}\phi_{i,j}\right|^{2}
+\frac{\xi}{2}\left|\nabla_{h}\rho_{i,j}\right|^{2} 
\\
  &  \qquad \quad 
+\frac{\varepsilon}{2}\left(\rho_{i,j}- {\cal A} |\nabla_{h}^\delta \phi|_{i,j} \right)^{2}
+\beta H (\rho_{i,j} ) \Big), 
\end{aligned} 
\end{equation*}
in which ${\cal A} |\nabla_{h}^\delta \phi|$ is defined as 
$$
  ( {\cal A} |\nabla_{h}^\delta \phi|_{i,j} )^2 = \frac12 ( (D_x \phi)_{i+\frac12, j}^2 + (D_x \phi)_{i-\frac12, j}^2 + (D_y \phi)_{i, j+\frac12}^2 + (D_y \phi)_{i, j-\frac12}^2) + \delta^2 . 
$$ 
The relevant discrete convex and concave energy functionals $E_{c}, E_{e}:\mathcal{C}_{per}\times\mathcal{C}_{per}\to\mathbb{R}$ are given by
\begin{align}
E_{c}\left(\phi,\rho\right)
=& h^{2}\sum_{i,j=1}^{N}\left(\frac{1}{4\varepsilon} (\phi_{i,j}-\frac{1}{2} )^{4}
+\frac{1}{64\varepsilon}
    +\frac{\varepsilon}{2}|\nabla_{h} \phi_{i,j}|^{2}
    +\frac{\eta^{2}}{2}|\Delta_{h}\phi_{i,j}|^{2}
    +\beta H\left(\rho_{i,j}\right)
    +\frac{\xi}{2}|\nabla_{h}\rho_{i,j}|^{2}\right.  \nonumber \\
    &\left.+\frac{\alpha}{2}\left(\left(\rho_{i,j}- {\cal A} |\nabla_{h}^\delta \phi |_{i,j} \right)^{2}
    + (\sqrt{2}-1 )\rho_{i,j}^{2}
    +\frac{1}{\delta}|\nabla_{h}\phi_{i,j}|^{2}\right)\right),   \label{discrete-energy-convex} \\
E_{e}\left(\phi,\rho\right)
=& h^{2}\sum_{i,j=1}^{N}\left(\frac{1}{8\varepsilon} (\phi_{i,j}-\frac{1}{2} )^{2}
    +\frac{\alpha}{2}\left( (\sqrt{2}-1 )\rho_{i,j}^{2}
    +\frac{1}{\delta}|\nabla_h \phi_{i,j} |^2 \right)\right) .   \label{discrete-energy-concave}    
\end{align}
In particular, a Hessian matrix could be similar formulated as the one given by~\eqref{Hessian-H-1}, for the following discrete function 
\begin{equation} 
   e_{c_2, h} (\rho, v_1, v_2, v_3, v_4 ) 
   = \frac{\alpha}{2} \Big( (\rho -  \sqrt{ \frac12 ( v_1^2 + v_2^2 + v_3^2 + v_4^2) + \delta^2} )^{2}
    + (\sqrt{2}-1 ) \rho^{2}
    +\frac{1}{2 \delta} ( v_1^2 + v_2^2 + v_3^2 + v_4^2) \Big) . 
\end{equation} 
A careful calculation reveals that, the corresponding $5 \times 5$ Hessian matrix is diagonally dominated, therefore non-negative definite. This in turn leads to the convexity of the following discrete functional: 
\begin{equation} 
   E_{c_2, h} (\rho, v_1, v_2, v_3, v_4 ) 
   = \frac{\alpha}{2} h^{2} \sum_{i,j=1}^{N}
     \left( (\rho_{i,j}- {\cal A} |\nabla_{h}^\delta \phi |_{i,j} )^{2}
    + (\sqrt{2}-1 )\rho_{i,j}^{2}
    +\frac{1}{\delta}|\nabla_{h}\phi_{i,j}|^{2}\right)  . 
\end{equation}  
The convexity analysiss for the other parts of $E_c$ and $E_e$ is more straightforward.  

We follow the idea of convexity splitting and consider the following semi-implicit, 
fully discrete scheme: given $\phi^{n}, \rho^{n} \in \mathcal{C}_{per}$, find 
$\phi^{n+1}, \rho^{n+1},\mu_{\phi}^{n+1},\mu_{\rho}^{n+1} \in \mathcal{C}_{per}$, 
such that
\begin{subequations}\label{cs_1st}
\begin{align}
\frac{\phi^{n+1}-\phi^{n}}{\Delta t} &=\mathcal{M}\Delta_{h} \mu_{\phi}^{n+1},\label{cs_1st_phi}\\
\frac{\rho^{n+1}-\rho^{n}}{\Delta t} &= \mathcal{M}\Delta_{h}\mu_{\rho}^{n+1},\label{cs_1st_rho}\\
\mu_{\phi}^{n+1} &= \delta_{\phi} E_{c}\left(\phi^{n+1},\rho^{n+1}\right)
-\delta_{\phi}E_{e}\left(\phi^{n},\rho^{n}\right),\label{cs_1st_mu1}\\
\mu_{\rho}^{n+1} &= \delta_{\rho} E_{c}\left(\phi^{n+1},\rho^{n+1}\right)
-\delta_{\rho}E_{e}\left(\phi^{n},\rho^{n}\right),\label{cs_1st_mu2}
\end{align}
\end{subequations}
where
\begin{align}
\mu_{\phi}^{n+1}=
&\frac{1}{\varepsilon} (\phi^{n+1}-\frac{1}{2} )^{3}
- (\varepsilon+\alpha+\frac{\alpha}{\delta} )\Delta_h \phi^{n+1}
+\eta^{2} \Delta_h^{2} \phi^{n+1}
+\alpha \nabla_h \cdot \left( {\cal A} ( \frac{\rho^{n+1} }{ {\cal A} |\nabla_h^\delta \phi^{n+1}|} ) \nabla_h \phi^{n+1} \right) \nonumber\\
&-\frac{1}{4\varepsilon} (\phi^{n}-\frac{1}{2} )
+\frac{\alpha}{\delta} \Delta_h \phi^{n}, \label{mu_phi}\\
\mu_{\rho}^{n+1}=
&-\xi \Delta_h \rho^{n+1}
+ \beta\left(\ln \rho^{n+1} - \ln \left(1-\rho^{n+1}\right)\right)
+\sqrt{2}\alpha\rho^{n+1}
-\alpha {\cal A} \left|\nabla_h \phi^{n+1}\right|
-\alpha(\sqrt{2}-1) \rho^{n}. \label{mu_rho}
\end{align}
Notice that $\nabla_h \cdot \left( {\cal A} ( \frac{\rho }{ {\cal A} |\nabla_h \phi|} ) \nabla_h \phi \right)$ is evaluated as follows 
\begin{align} 
  & 
  \nabla_h \cdot \left( {\cal A} ( \frac{\rho }{ {\cal A} |\nabla_h^\delta \phi|} ) \nabla_h \phi \right)  
  \nonumber 
\\
  =&  \frac{1}{h} \Big( \frac12 ( \frac{\rho_{i,j} }{  {\cal A} |\nabla_h^\delta \phi_{i,j} |}  
   + \frac{\rho_{i+1,j} }{  {\cal A} |\nabla_h^\delta \phi_{i+1,j} |} ) ( D_x \phi )_{i+\frac12, j} 
   - \frac12 ( \frac{\rho_{i,j} }{  {\cal A} |\nabla_h^\delta \phi_{i,j} |}  
   + \frac{\rho_{i-1,j} }{  {\cal A} |\nabla_h^\delta \phi_{i-1,j} |} ) ( D_x \phi )_{i-\frac12, j}  \Big)  
   \nonumber 
\\
  & 
   + \frac{1}{h} \Big( \frac12 ( \frac{\rho_{i,j} }{  {\cal A} |\nabla_h^\delta \phi_{i,j} |}  
   + \frac{\rho_{i,j+1} }{  {\cal A} |\nabla_h^\delta \phi_{i,j+1} |} ) ( D_y \phi )_{i, j+\frac12} 
   - \frac12 ( \frac{\rho_{i,j} }{  {\cal A} |\nabla_h^\delta \phi_{i,j} |}  
   + \frac{\rho_{i,j-1} }{  {\cal A} |\nabla_h^\delta \phi_{i,j-1} |} ) ( D_y \phi )_{i, j-\frac12}  \Big) .    
\end{align} 

It is observed that the finite difference scheme is a system of nonlinear equations with respect to $\phi^{n+1}$ and $\rho^{n+1}$, so we that have to solve it iteratively. 
The theoretical properties of this scheme are analyzed in the next few sections.
\subsection{The positivity-preserving property}
Of course, a point-wise bound for the grid function $\rho^{n+1}$, namely, $0<\rho_{i,j}^{n+1}< 1$, is needed to make sure the numerical scheme is well-defined. 
The main theoretical result is stated below, which assures that there exists a unique numerical solution for $\eqref{cs_1st_phi}$ and $\eqref{cs_1st_rho}$, 
so that the given bound is satisfied. 
\begin{theorem} \label{thm: positivity} 
Given $\phi^{n},\rho^{n}\in\mathcal{C}_{per}$, with $\|\rho^{n}\|_{\infty}\leq M$, for some $M\geq 0$, and $ 0 < \overline{\rho^{n}} <1$, 
there exists a unique solution $\phi^{n+1},\rho^{n+1}\in\mathcal{C}_{per}$ to $\eqref{cs_1st}$, with $0 < \rho^{n+1} <1$ at a point-wise level. 
\end{theorem}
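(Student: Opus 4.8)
The plan is to recast the fully discrete scheme~\eqref{cs_1st} as the Euler--Lagrange equation of a strictly convex discrete functional, and then to show that its unique minimizer can never touch the singular values $\rho=0$ or $\rho=1$. First I would note that, since $\Delta_h$ applied to any periodic grid function has zero mean, the scheme conserves the averages, $\overline{\phi^{n+1}}=\overline{\phi^{n}}$ and $\overline{\rho^{n+1}}=\overline{\rho^{n}}$, so I may work in the mean-zero space $\mathring{\mathcal{C}}_{per}$ equipped with $\|\cdot\|_{-1,h}$. Applying $\mathcal{L}^{-1}=(-\Delta_h)^{-1}$ to~\eqref{cs_1st_phi}--\eqref{cs_1st_rho} and substituting into~\eqref{cs_1st_mu1}--\eqref{cs_1st_mu2}, the whole system becomes the stationarity condition of the functional
\begin{equation*}
\begin{aligned}
J^{n}(\phi,\rho)=&\frac{1}{2\mathcal{M}\Delta t}\big(\|\phi-\phi^{n}\|_{-1,h}^{2}+\|\rho-\rho^{n}\|_{-1,h}^{2}\big)+E_{c}(\phi,\rho)\\
&-\langle\delta_{\phi}E_{e}(\phi^{n},\rho^{n}),\phi\rangle_{\Omega}-\langle\delta_{\rho}E_{e}(\phi^{n},\rho^{n}),\rho\rangle_{\Omega},
\end{aligned}
\end{equation*}
minimized over the admissible set $A_{h}=\{(\phi,\rho)\in\mathcal{C}_{per}\times\mathcal{C}_{per}:\overline{\phi}=\overline{\phi^{n}},\ \overline{\rho}=\overline{\rho^{n}},\ 0\le\rho\le1\}$, where the two fixed grid functions $\delta_{\phi}E_{e}(\phi^{n},\rho^{n})$ and $\delta_{\rho}E_{e}(\phi^{n},\rho^{n})$ depend only on the time-$n$ data and so produce linear terms.

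Next I would establish existence and uniqueness of a minimizer of $J^{n}$ over $A_{h}$. The set $A_{h}$ is closed and convex; the constraint $0\le\rho\le1$ keeps $\rho$ bounded, while coercivity in $\phi$ comes from the quartic term $\tfrac{1}{4\varepsilon}(\phi-\tfrac12)^{4}$ together with the $\|\phi-\phi^{n}\|_{-1,h}^{2}$ contribution, which jointly dominate the (at most quadratic) concave and linear terms as $\|\phi\|\to\infty$. Since $H(\rho)=\rho\ln\rho+(1-\rho)\ln(1-\rho)$ extends continuously to $[0,1]$, the functional $J^{n}$ is continuous on $A_{h}$, and coercivity plus lower semicontinuity yield a minimizer. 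The convexity of $E_{c}$ established earlier, combined with the strict convexity of the two $\|\cdot\|_{-1,h}^{2}$ terms along mean-zero directions, makes $J^{n}$ strictly convex on the affine subspace of fixed averages, so the minimizer is unique.

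The crux, and the \emph{main obstacle}, is to prove the strict bounds $0<\rho^{\star}<1$ pointwise for the minimizer $(\phi^{\star},\rho^{\star})$, so that it lies in the relative interior of $A_{h}$ and hence solves~\eqref{cs_1st}. I would argue by contradiction using the singularity of $H'$. Suppose $\rho^{\star}$ attains the value $0$ at a node $(i_{0},j_{0})$. Because $\overline{\rho^{\star}}=\overline{\rho^{n}}\in(0,1)$, there is a node $(i_{1},j_{1})$ with $\rho^{\star}_{i_{1},j_{1}}\ge\overline{\rho^{n}}>0$. Take the mass-conserving perturbation $\rho^{\star}_{s}=\rho^{\star}+s\,\psi$ with $\psi_{i_{0},j_{0}}=1$, $\psi_{i_{1},j_{1}}=-1$, and $\psi=0$ elsewhere, which stays admissible for all small $s>0$. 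Differentiating $J^{n}(\phi^{\star},\rho^{\star}_{s})$ in $s$, every contribution is bounded uniformly as $s\to0^{+}$ --- the $\|\cdot\|_{-1,h}^{2}$ terms, the quadratic $\rho^{2}$ terms, the $\xi$-gradient term, and the coupling and concave terms, the last controlled through $\|\rho^{n}\|_{\infty}\le M$ and the fixed value of $\phi^{\star}$ --- except for the logarithmic term, whose derivative at $(i_{0},j_{0})$ equals $\beta H'(\rho^{\star}_{i_{0},j_{0}}+s)=\beta\big(\ln s-\ln(1-s)\big)\to-\infty$, while the matching term at $(i_{1},j_{1})$ is either finite or also tends to $-\infty$, never $+\infty$. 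Hence $\tfrac{d}{ds}J^{n}(\phi^{\star},\rho^{\star}_{s})\big|_{s\to0^{+}}=-\infty<0$, so $J^{n}$ strictly decreases along this direction, contradicting minimality. A symmetric argument, transferring mass \emph{away} from any node where $\rho^{\star}=1$ and invoking $H'(\rho)\to+\infty$ as $\rho\to1^{-}$, rules out the value $1$; therefore $0<\rho^{\star}<1$.

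Finally, since the unique minimizer lies in the relative interior of $A_{h}$ with respect to the $\rho$-constraint, all admissible variations are two-sided, the first-order condition $\delta J^{n}=0$ holds, and this is exactly the scheme~\eqref{cs_1st} after applying $\mathcal{L}^{-1}$; in particular the chemical potentials~\eqref{mu_phi}--\eqref{mu_rho} are well defined because every logarithm is evaluated at an interior value. Strict convexity then gives uniqueness of $(\phi^{n+1},\rho^{n+1})$, completing the argument. I expect the delicate point throughout to be the directional-derivative bookkeeping: one must verify that the genuinely singular $\beta\ln s$ blow-up is cleanly isolated and that every remaining term --- most notably the coupling term $\tfrac{\alpha}{2}(\rho-\mathcal{A}|\nabla_{h}^{\delta}\phi|)^{2}$ and the discrete $H^{-1}$ inner products --- contributes only $O(s)$ and therefore cannot cancel the logarithmic singularity.
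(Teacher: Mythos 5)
Your proposal is correct, and it shares the paper's overall variational framework---recasting \eqref{cs_1st} as the Euler--Lagrange equation of the strictly convex functional \eqref{J-defi-1} and using the singularity of the logarithm to keep the minimizer away from $\rho=0,1$---but your technical route differs in two substantive ways. First, for existence you minimize over the full closed set $\{0\le\rho\le1,\ \overline{\phi}=\overline{\phi^n},\ \overline{\rho}=\overline{\rho^n}\}$ and invoke coercivity of the quartic term in $\phi$; the paper instead works on a truncated compact set $\mathring{A}_{h,\delta_0}$ with margins $\delta_0$ in $\rho$ \emph{and} an a priori bound $-A^*<\phi<A^*$, which forces it to prove two auxiliary results your argument never needs: the $\ell^\infty$ bound for $(-\Delta_h)^{-1}$ (Lemma~\ref{lem: Poisson}) and the lower bound on $\mathcal{J}^n$ (Lemma~\ref{lem: J}), the latter used solely to exclude minimizers on the artificial $\phi$-boundary. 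Second, your boundary-exclusion argument perturbs at the fixed minimizer itself, where the blow-up of $H'$ overwhelms all remaining terms (which are finite once $\phi^{\star}$ is fixed); the paper instead computes directional derivatives at points of the $\delta_0$-boundary, where every quantity is finite, and must therefore bound them \emph{uniformly} over the truncated set before choosing $\delta_0$ small---this uniformity is precisely why the $A^*$ bound and Lemma~\ref{lem: Poisson} enter, producing the constant $D_0$. What each approach buys: the paper's route yields explicit quantitative information (how small $\delta_0$ must be in terms of $\Delta t$, $h$, $M$, $\overline{\rho}_0$) and avoids ever differentiating at a singular point; yours is shorter and more elementary, and also dispenses with the hypothesis $\|\rho^n\|_\infty\le M$ as a quantitative ingredient. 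The one point in your write-up needing polish is the assertion $\frac{d}{ds}J^{n}\big|_{s\to0^{+}}=-\infty$: the derivative does not exist at $s=0$ when $\rho^{\star}$ touches $0$, so you should either compare function values directly---noting $H(s)-H(0)\le s\ln s$ dominates the $O(s)$ contributions of all smooth terms---or apply the mean value theorem on $(0,s)$; either repair is immediate, and the same remark applies to your treatment of the value $1$.
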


Before the proof of the positivity-preserving property, we recall the following lemma, cited from~\cite{Chen2019Pos}.  

\begin{lemma} \label{lem: Poisson}  \cite{Chen2019Pos} 
Suppose that $\phi\in \mathring{\mathcal{C}}_{\mathrm{per}}$ and $\|\phi\|_{\infty}\leq C_1$, then we have the following estimate:
\begin{equation}\label{LEM}
\begin{aligned}
\| ( - \Delta_h )^{-1}\phi\|_{\infty}\leq C_{2} C_1 ,
\end{aligned}
\end{equation}
where $C_2$ depends only on $\Omega$.
\end{lemma}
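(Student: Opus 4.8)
The plan is to recognize this as a discrete elliptic $\ell^\infty$-regularity estimate for the periodic Poisson problem: writing $\psi = (-\Delta_h)^{-1}\phi \in \mathring{\mathcal{C}}_{per}$, the grid function $\psi$ is the mean-zero solution of $-\Delta_h\psi = \phi$, and we must bound $\|\psi\|_\infty$ by $\|\phi\|_\infty$ with a constant independent of the mesh size $h$. The continuous analogue is $\psi = G \ast \phi$, with $G$ the Green's function of $-\Delta$ on the torus; in two dimensions $G$ has only a logarithmic singularity and is therefore integrable, so $\|\psi\|_\infty \le \|G\|_{L^1}\|\phi\|_\infty$. My approach is to reproduce this fact at the discrete level through discrete Fourier analysis, which diagonalizes the periodic operator $-\Delta_h$ and turns the estimate into a summability question.

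First I would expand $\phi$ and $\psi$ in the discrete Fourier basis $e^{2\pi\mathrm{i}(ki+lj)/N}$ over the symmetric frequency range $|k|,|l|\le N/2$. A direct computation shows that $-\Delta_h$ acts as the Fourier multiplier $\lambda_{k,l} = \frac{4}{h^2}\bigl(\sin^2\frac{\pi k}{N} + \sin^2\frac{\pi l}{N}\bigr)$, so that $\hat\psi_{k,l} = \hat\phi_{k,l}/\lambda_{k,l}$ for $(k,l)\ne(0,0)$, while $\hat\psi_{0,0}=0$ thanks to $\phi \in \mathring{\mathcal{C}}_{per}$. Since each Fourier mode has modulus one, the triangle inequality followed by Cauchy--Schwarz gives
\[
\|\psi\|_\infty \le \sum_{(k,l)\ne(0,0)} |\hat\psi_{k,l}| \le \Bigl(\sum_{(k,l)\ne(0,0)} \lambda_{k,l}^{-2}\Bigr)^{\frac12}\Bigl(\sum_{(k,l)\ne(0,0)}|\hat\phi_{k,l}|^2\Bigr)^{\frac12}.
\]
By Parseval's identity the second factor is comparable to $\|\phi\|_2$, and since $\|\phi\|_2 \le |\Omega|^{\frac12}\|\phi\|_\infty \le |\Omega|^{\frac12}C_1$, it is bounded by $C C_1$.

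The crux is therefore to show that the first factor $\sum_{(k,l)\ne(0,0)}\lambda_{k,l}^{-2}$ is bounded uniformly in $N$ (equivalently, in $h$). Here I would invoke Jordan's inequality $\sin(\pi\theta)\ge 2\theta$ on $[0,\tfrac12]$, valid for $|k|,|l|\le N/2$, to obtain the clean lower bound $\lambda_{k,l} \ge \frac{16}{L^2}(k^2+l^2)$. This reduces the sum to $\frac{L^4}{256}\sum_{(k,l)\ne(0,0)}(k^2+l^2)^{-2}$, and the latter two-dimensional series converges (its tail behaves like $\sum_r r\cdot r^{-4}$), so it is bounded by a finite constant depending only on $L$, i.e. only on $\Omega$. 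Combining the two factors yields $\|\psi\|_\infty \le C_2 C_1$ with $C_2 = C_2(\Omega)$.

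I expect the summability step to be the main obstacle, both because it is where the uniformity in $h$ is genuinely earned and because it is exactly borderline: the convergence of $\sum(k^2+l^2)^{-2}$ relies on the spatial dimension being two together with the multiplier appearing to the second power, which is the discrete reflection of the embedding $H^2 \hookrightarrow L^\infty$ in $\mathbb{R}^2$. An alternative, equivalent route would bound $\|\psi\|_{H_h^2}$ by $\|\Delta_h\psi\|_2 = \|\phi\|_2$ using discrete Poincar\'e and summation-by-parts, and then apply a discrete Sobolev embedding $H_h^2 \hookrightarrow \ell^\infty$; the proof of that embedding, however, reduces to the very same Fourier estimate, so I would present the direct Fourier argument above.
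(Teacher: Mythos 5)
Your proof is correct, and the paper itself does not prove this lemma — it simply cites it from \cite{Chen2019Pos}, where the argument is essentially the one you give: diagonalize $-\Delta_h$ by discrete Fourier modes, bound the multiplier below by $\lambda_{k,l}\geq \frac{16}{L^2}(k^2+l^2)$ via $\sin(\pi\theta)\geq 2\theta$ on $[0,\tfrac12]$, and conclude by Cauchy--Schwarz, Parseval, and the $h$-uniform convergence of $\sum_{(k,l)\neq(0,0)}(k^2+l^2)^{-2}$ (equivalently, the discrete $H_h^2\hookrightarrow \ell^\infty$ embedding you mention as the alternative route). The only quibble is cosmetic: the sum is not ``exactly borderline'' in two dimensions — the tail behaves like $\int r^{-3}\,\mathrm{d}r$, a full power inside the convergence threshold — but this does not affect the validity of the argument.
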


In addition, a few more preliminary estimates are needed for the positivity-preserving analysis. The following discrete energy functional is introduced
\begin{equation} 
\begin{aligned} 
\mathcal{J}^{n}\left(\phi,\rho\right):=
&\frac{1}{2\mathcal{M}\Delta t}\|\phi-\phi^{n}\|_{-1,h}^{2}
+\frac{1}{2\mathcal{M}\Delta t}\|\rho-\rho^{n}\|_{-1,h}^{2} \\
&+\frac{1}{4\varepsilon} \| \phi-\frac{1}{2} \|_4^4 
+\frac{\varepsilon}{2}\left\|\nabla_{h}\phi\right\|_{2}^{2}
+\frac{\eta^{2}}{2}\left\|\Delta_{h}\phi\right\|_{2}^{2}
+\frac{\xi}{2}\left\|\nabla_{h}\rho\right\|_{2}^{2} \\
&+\frac{\alpha}{2}\left\{\left\|\rho- {\cal A} \left|\nabla_{h}^\delta \phi\right|\right\|_{2}^{2}
+ (\sqrt{2}-1 ) \| \rho \|_{2}^{2}
+\frac{1}{\delta}\left\|\nabla_{h}\phi\right\|_{2}^{2}\right\} \\
&+\beta\left\langle\rho,\ln\rho\right\rangle_{\Omega}
+\beta\left\langle 1-\rho,\ln\left(1-\rho\right)\right\rangle_\Omega 
 + \langle \phi , f_\phi^n \rangle_\Omega 
 + \langle \rho, f_\rho^n \rangle_\Omega , \\
f_\phi^n = & -\frac{1}{4\varepsilon} ( \phi^{n} -\frac{1}{2} ) 
-\frac{\alpha}{\delta} \Delta_{h}\phi^{n} ,  \quad 
  f_\rho^n = - (\sqrt{2}-1 )\alpha \rho^n . 
\end{aligned} 
\label{J-defi-1} 
\end{equation} 

\begin{lemma} \label{lem: J}  
Set $M_1^* = \max | f_\phi^n |$, $M_2^* = \max | f_\rho^n| \le ( \sqrt{2}-1 ) \alpha M$. We notice that $M_1^*$ may be $h$, $\varepsilon$, $\delta$ and $\phi^n$ dependent. The following inequality is available for the lower bound of $\mathcal{J}^{n} (\phi, \rho)$, for $0 < \rho <1$: 
\begin{eqnarray}  \label{lem: est-J} 
  \mathcal{J}^{n} (\phi, \rho)  \ge   
  \frac{1}{8 \varepsilon} \| \phi-\frac{1}{2} \|_4^4  - M_3^n ,  \quad 
  M_3^n = ( \frac12 ( M_1^* )^2 + \frac12 M_1^* + \frac12 \varepsilon + \beta \ln 2 + M_2^* ) | \Omega | ,  
  \label{est-J-0} 
\end{eqnarray}
\end{lemma}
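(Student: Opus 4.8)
The plan is to discard every manifestly non-negative contribution to $\mathcal{J}^{n}$, to reserve exactly one half of the quartic term $\frac{1}{4\varepsilon}\|\phi-\frac12\|_4^4$ for absorbing the linear $\phi$-coupling, and to control the two remaining ``dangerous'' pieces --- the Flory--Huggins entropy and the linear terms $\langle\phi,f_\phi^n\rangle_\Omega$, $\langle\rho,f_\rho^n\rangle_\Omega$ --- by elementary pointwise inequalities. The essential observation is that the only terms in \eqref{J-defi-1} that are not automatically non-negative are the entropy and the two linear ones, and each of these can be bounded below by a constant multiple of $|\Omega|$.

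First I would note that the two inner-product norms $\frac{1}{2\mathcal{M}\Delta t}\|\phi-\phi^n\|_{-1,h}^2$ and $\frac{1}{2\mathcal{M}\Delta t}\|\rho-\rho^n\|_{-1,h}^2$, the gradient and bi-Laplacian norms $\frac{\varepsilon}{2}\|\nabla_h\phi\|_2^2$, $\frac{\eta^2}{2}\|\Delta_h\phi\|_2^2$, $\frac{\xi}{2}\|\nabla_h\rho\|_2^2$, $\frac{\alpha}{2\delta}\|\nabla_h\phi\|_2^2$, the coupled square $\frac{\alpha}{2}\|\rho-\mathcal{A}|\nabla_h^\delta\phi|\|_2^2$, and $\frac{\alpha}{2}(\sqrt2-1)\|\rho\|_2^2$ are all non-negative, so they may be dropped in a lower bound. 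This reduces the estimate to the quartic, the entropy, and the two linear terms. For the entropy, since $H(\rho)=\rho\ln\rho+(1-\rho)\ln(1-\rho)$ is convex on $(0,1)$ with minimum $H(\tfrac12)=-\ln 2$, a pointwise bound yields $\beta\langle\rho,\ln\rho\rangle_\Omega+\beta\langle 1-\rho,\ln(1-\rho)\rangle_\Omega\ge -\beta\ln 2\,|\Omega|$, and the hypothesis $0<\rho<1$ gives $\langle\rho,f_\rho^n\rangle_\Omega\ge -M_2^*|\Omega|$ directly from $|f_\rho^n|\le M_2^*$.

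The crux is the linear $\phi$-term, which I would handle against the reserved half-quartic. Setting $u:=\phi-\tfrac12$ and using $|f_\phi^n|\le M_1^*$ pointwise together with $|\phi|\le|u|+\tfrac12$, one has $\langle\phi,f_\phi^n\rangle_\Omega\ge -M_1^*\big(\|u\|_1+\tfrac12|\Omega|\big)$, so the task becomes absorbing $M_1^*|u|$ into $\frac{1}{8\varepsilon}u^4$ at the pointwise level. Two successive Young-type inequalities accomplish this cleanly: $M_1^*|u|\le\frac12(M_1^*)^2+\frac12 u^2$, and then $\frac12 u^2\le\frac{1}{8\varepsilon}u^4+\frac{\varepsilon}{2}$, the latter obtained by maximizing $\frac12 t-\frac{1}{8\varepsilon}t^2$ over $t=u^2\ge 0$ (the maximum, attained at $t=2\varepsilon$, equals $\frac{\varepsilon}{2}$). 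Hence pointwise $\frac{1}{8\varepsilon}u^4-M_1^*|\phi|\ge-\frac12(M_1^*)^2-\frac12 M_1^*-\frac{\varepsilon}{2}$, and multiplying by $h^2$ and summing (using $h^2\sum_{i,j}1=|\Omega|$) produces the constant $\big(\frac12(M_1^*)^2+\frac12 M_1^*+\frac{\varepsilon}{2}\big)|\Omega|$.

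Finally, assembling the three estimates and splitting $\frac{1}{4\varepsilon}\|\phi-\frac12\|_4^4=\frac{1}{8\varepsilon}\|\phi-\frac12\|_4^4+\frac{1}{8\varepsilon}\|\phi-\frac12\|_4^4$ --- keeping the first copy and feeding the second into the $\phi$-term estimate --- yields precisely $\mathcal{J}^n(\phi,\rho)\ge\frac{1}{8\varepsilon}\|\phi-\frac12\|_4^4-M_3^n$ with $M_3^n=\big(\frac12(M_1^*)^2+\frac12 M_1^*+\frac12\varepsilon+\beta\ln 2+M_2^*\big)|\Omega|$. I do not expect a genuine obstacle in this argument; the only point that requires care is the bookkeeping in the double Young's inequality, so that the reserved $\frac{1}{8\varepsilon}$-quartic is exactly strong enough to dominate $M_1^*|u|$ up to the stated $\varepsilon$-dependent remainder, and so that the three constants $\frac12(M_1^*)^2$, $\frac12 M_1^*$, $\frac12\varepsilon$ emerge with the claimed weights.
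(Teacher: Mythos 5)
Your proposal is correct and follows essentially the same route as the paper's proof: drop the non-negative terms, bound the entropy pointwise by $-\ln 2$, bound $\langle\rho,f_\rho^n\rangle_\Omega$ by $-M_2^*|\Omega|$, and absorb the linear $\phi$-term into a reserved half of the quartic via two Young-type inequalities, yielding the identical constant $M_3^n$. The only cosmetic difference is that the paper applies Young's inequality at the inner-product level (splitting $\phi=(\phi-\tfrac12)+\tfrac12$ and using $\langle\phi-\tfrac12,f_\phi^n\rangle_\Omega\ge-\tfrac12\|\phi-\tfrac12\|_2^2-\tfrac12\|f_\phi^n\|_2^2$) whereas you work pointwise, which is equivalent.
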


\begin{proof} 
In the expansion of $\mathcal{J}^{n} (\phi, \rho)$, it is observed that all the terms are non-negative, except for the last four term. Then we get 
\begin{equation} 
\mathcal{J}^{n} (\phi,\rho) \ge 
  \frac{1}{4\varepsilon} \| \phi-\frac{1}{2} \|_4^4 
  + \langle \phi , f_\phi^n \rangle_\Omega   
 +\beta ( \left\langle\rho,\ln\rho\right\rangle_{\Omega}
+ \left\langle 1-\rho,\ln\left(1-\rho\right)\right\rangle_\Omega ) 
 + \langle \rho, f_\rho^n \rangle_\Omega . \label{est-J-1} 
\end{equation} 
For the third and fourth terms on the right hand side of~\eqref{est-J-1}, the following point-wise lower bound is available: 
\begin{equation} 
\begin{aligned} 
  &
  \rho \ln \rho + ( 1-\rho) \ln (1-\rho)  \ge 2 \cdot \frac12 \ln \frac12 
  = - \ln 2 ,  \quad \mbox{for $0 < \rho < 1$} , 
\\
  & 
  \mbox{which in turn leads to} \quad 
  \left\langle\rho,\ln\rho\right\rangle_{\Omega}
+ \left\langle 1-\rho,\ln\left(1-\rho\right)\right\rangle_\Omega  
\ge - | \Omega | \ln 2 . 
\end{aligned} 
  \label{est-J-2} 
\end{equation} 
For the last term on the right hand side of~\eqref{est-J-1}, the following lower bound is valid, based on the fact that $0 < \rho < 1$: 
\begin{equation} 
   \langle \rho, f_\rho^n \rangle_\Omega  \ge - \| f_\rho^n \|_\infty \cdot \| {\bf 1} \|_1 
   \ge   - M_2^*  | \Omega |  .  \label{est-J-3} 
\end{equation}    
For the first two terms on the right hand side of~\eqref{est-J-1}, we begin with the following estimate: 
\begin{eqnarray} 
\begin{aligned} 
    \langle \phi , f_\phi^n \rangle_\Omega 
    =&  \langle \phi - \frac12 , f_\phi^n \rangle_\Omega  
    +   \frac12 \langle {\bf 1}  , f_\phi^n \rangle_\Omega   
    \ge - \frac12 ( \| \phi - \frac12 \|_2^2 + \| f_\phi^n \|_2^2 ) 
    - \frac12 M_1^* | \Omega |  
\\
  \ge& 
  - \frac12 \| \phi - \frac12 \|_2^2 - \frac12 ( M_1^* )^2 | \Omega |  
    - \frac12 M_1^* | \Omega |   .   
\end{aligned} 
\label{est-J-4-1} 
\end{eqnarray}   
Meanwhile, the following lower bound is a direct application of Cauchy inequality: 
\begin{equation} 
     \frac{1}{8\varepsilon} \| \phi-\frac{1}{2} \|_4^4  
     -   \frac12 \| \phi - \frac12 \|_2^2 \ge -\frac12 \varepsilon | \Omega | , 
     \label{est-J-4-2}      
\end{equation} 
and its combination with~\eqref{est-J-4-1} yields  
\begin{eqnarray} 
    \frac{1}{8\varepsilon} \| \phi-\frac{1}{2} \|_4^4  +
    \langle \phi , f_\phi^n \rangle_\Omega     
  \ge - ( \frac12 ( M_1^* )^2 + \frac12 M_1^*  + \frac12 \varepsilon ) | \Omega |   .   
\label{est-J-4-3} 
\end{eqnarray}          
 Finally, a substitution of~\eqref{est-J-2}, \eqref{est-J-3} and \eqref{est-J-4-3} into \eqref{est-J-1} results in~\eqref{est-J-0}. This completes the proof of Lemma~\ref{lem: J}.     
\end{proof} 

Now we proceed into the proof of Theorem~\ref{thm: positivity}. 

\begin{proof}
We denote $M_0^n = \mathcal{J}^{n} (\phi^n, \rho^n)$, a fixed constant with given $(\phi^n, \rho^n)$. The numerical solution of $\eqref{cs_1st}$ is a minimizer of the discrete energy functional $\mathcal{J}^{n}\left(\phi,\rho\right)$ (defined in~\eqref{J-defi-1}), over the admissible set
\begin{equation} 
\begin{aligned} 
  & 
A_{h}:=\left\{\phi,\rho\in\mathcal{C}_{per}| -A^* < \phi < A^*, \, 0 < \rho < 1,\left\langle\phi-\bar{\phi}_{0}\right\rangle_{\Omega}=0,\left\langle\rho-\bar{\rho}_{0}\right\rangle_{\Omega}=0\right\} \subset \mathbb{R}^{N^2} \times \mathbb{R}^{N^2} , 
\\
  & 
  A^* := \max \Big( ( 8  \varepsilon (M_0^n + M_3 ^n ) )^\frac14 h^{-\frac12} , \| \phi^n \|_\infty \Big) + 1 . 
\end{aligned} 
 \label{defi-A-delta}   
\end{equation}
We can observe that $\mathcal{J}^{n}$ is a strictly convex function over this set.

To facilitate the analysis below, we transform the minimization problem into an equivalent one. Consider the functional
\begin{align}
\mathcal{F}^{n}\left(\phi,\rho\right)
:=&\mathcal{J}^{n}\left(\phi+\bar{\phi}_{0},\rho+\bar{\rho}_{0}\right)\nonumber\\
=&\frac{1}{2\mathcal{M}\Delta t}\|\phi+\bar{\phi}_{0}-\phi^{n}\|_{-1,h}^{2}
+\frac{1}{2\mathcal{M}\Delta t}\|\rho+\bar{\rho}_{0}-\rho^{n}\|_{-1,h}^{2}\nonumber\\
&+\frac{1}{4\varepsilon} \| (\phi+\bar{\phi}_{0}-\frac{1}{2} )^{2} \|_{2}^{2}
+\frac{\varepsilon}{2}\left\|\nabla_{h}\phi\right\|_{2}^{2}
+\frac{\eta^{2}}{2}\left\|\Delta_{h}\phi\right\|_{2}^{2}
+\frac{\xi}{2}\left\|\nabla_{h}\rho\right\|_{2}^{2}\nonumber\\
&+\frac{\alpha}{2}\left\{\left\|\rho+\bar{\rho}_{0}
- {\cal A} |\nabla_{h}^\delta \phi| \right\|_{2}^{2}
+ (\sqrt{2}-1 )\left\|\rho+\bar{\rho}_{0}\right\|_{2}^{2}
+\frac{1}{\delta}\left\|\nabla_{h}\phi\right\|_{2}^{2}\right\}\nonumber\\
&+\beta\left\langle\rho+\bar{\rho}_{0},\ln\left(\rho+\bar{\rho}_{0}\right)\right\rangle_{\Omega}
+\beta\left\langle 1-\rho-\bar{\rho}_{0},\ln\left(1-\rho-\bar{\rho}_{0}\right)\right\rangle_{\Omega}\nonumber\\
& + \langle\phi+\bar{\phi}_{0}, f_\phi^n \rangle_\Omega 
 + \langle\rho+\bar{\rho}_{0}, f_\rho^n \rangle_{\Omega},
\end{align}
defined on the set
\begin{equation*}
\mathring{A}_{h}:=
\left\{\left(\phi,\rho\right)\in\mathring{\mathcal{C}}_{per}\times\mathring{\mathcal{C}}_{per} | -A^* - \bar{\phi}_0 < \phi < A^*- \bar{\phi}_0 , -\bar{\rho}_{0} < \rho < 1-\bar{\rho}_{0}\right\}
\subset\mathbb{R}^{N^{2}}\times\mathbb{R}^{N^{2}}.
\end{equation*}
If $(\phi, \rho) \in\mathring{A}_{h}$ minimizes $\mathcal{F}^{n}$, then $(\varphi, \varrho) :=( \phi+\bar{\phi}_{0}, \rho + \bar{\rho}_0) \in A_{h}$ minimizes $\mathcal{J}^{n}$ and $vice~versa$. 
Next, we prove that there exists a minimizer of $\mathcal{F}^{n}$ over the domain $\mathring{A}_{h}$. Consider the following closed domain: for $\delta_0 \in\left(0,\frac{1}{2}\right)$
\begin{equation*}  
\begin{aligned} 
\mathring{A}_{h,\delta_0}:= & \left\{\left(\phi,\rho\right)\in\mathring{\mathcal{C}}_{per}\times\mathring{\mathcal{C}}_{per} | -A^* - \bar{\phi}_0 + \delta_0 \le \phi \le A^*- \bar{\phi}_0 - \delta_0 , \, \delta_0 -\bar{\rho}_{0}\leq \rho \leq 1-\delta_0 -\bar{\rho}_{0}\right\}  
\\
  & 
  \subset\mathbb{R}^{N^{2}}\times\mathbb{R}^{N^{2}}. 
\end{aligned} 
\end{equation*}
Since $\mathring{A}_{h,\delta_0}$ is a bounded, compact, and convex set in the subspace $\mathring{\mathcal{C}}_{per}$, 
there exists a (not necessarily unique) minimizer of $\mathcal{F}^{n}$ over $\mathring{A}_{h,\delta_0}$. The key point of the positivity analysis is that, such a minimizer could not occur on the boundary of $\mathring{A}_{h,\delta_0}$, if $\delta_0$ is sufficiently small. To be more explicit, by the boundary of $\mathring{A}_{h,\delta_0}$, 
we mean the locus of points $\left(\phi,\psi\right)\in\mathring{A}_{h,\delta_0}$ such that $\psi+\bar{\rho}_{0} =\delta_0$ or $1-\delta_0$, or $\phi+\bar{\phi}_{0} =-A^* + \delta_0$ or $A^* -\delta_0$, precisely.

To get a contradiction, suppose that the minimizer of $\mathcal{F}^{n}$, call it $\left(\phi^{\star},\rho^{\star}\right)$, occurs at a boundary point of $\mathring{A}_{h,\delta_0}$. 
There is at least one grid point $\vec{\alpha}_{0}=\left(i_{0},j_{0}\right)$ such that $\rho_{\vec{\alpha}_{0}}^{\star}+\bar{\rho}_{0} = \delta_0$ or $1-\delta_0$, or $\phi_{\vec{\alpha}_{0}}^{\star} +\bar{\phi}_{0} =-A^* + \delta_0$ or $A^* -\delta_0$. Let us assume that $\rho_{\vec{\alpha}_{0}}^{\star}+\bar{\rho}_{0}=\delta_0$, 
and denote by $\vec{\alpha}_{1}=\left(i_{1},j_{1}\right)$ as the grid point at which $\rho^{\star}$ achieves its maximum. By the fact that $\bar{\rho^{\star}}=0$, it is obvious that $\rho_{\vec{\alpha}_{1}}^\star \ge 0$ and 
\begin{equation*}
1-\delta_0 \geq \rho_{\vec{\alpha}_{1}}^{\star}+\bar{\rho}_{0}\geq \bar{\rho}_{0}.
\end{equation*}
Since 
$\mathcal{F}^{n}$ is smooth over $\mathring{A}_{h,\delta}$, for all 
$\left(\varphi,\psi\right)\in\mathring{\mathcal{C}}_{per}$, the directional derivative is 
\begin{align}
&\mathrm{d}_{s}\mathcal{F}^{n}\left(\phi^{\star}+s\varphi,\rho^{\star}+s\psi\right)|_{s=0}\nonumber\\
=&\frac{1}{\mathcal{M}\Delta t}\left\langle\left(-\Delta_{h}\right)^{-1}
\left(\phi^{\star}+\bar{\phi}_{0}-\phi^{n}\right),\varphi\right\rangle_{\Omega}
+\frac{1}{\mathcal{M}\Delta t}\left\langle\left(-\Delta_{h}\right)^{-1}
\left(\rho^{\star}+\bar{\rho}_{0}-\rho^{n}\right),\psi\right\rangle_{\Omega}\nonumber\\
&+\frac{1}{2\varepsilon} \langle\phi^{\star}+\bar{\phi}_{0}-\frac{1}{2},\varphi \rangle_{\Omega}
-\varepsilon\left\langle\Delta_{h}\phi^{\star},\varphi\right\rangle_{\Omega}
+\eta^{2}\left\langle\Delta_{h}^{2}\phi^{\star},\varphi\right\rangle_{\Omega}
-\xi\left\langle\Delta_{h}\rho^{\star},\psi\right\rangle_{\Omega}\nonumber\\
&+\beta\left\langle\ln\left(\rho^{\star}+\bar{\rho}_{0}\right)
-\ln\left(1-\rho^{\star}-\bar{\rho}_{0}\right),\psi\right\rangle_{\Omega}
-\alpha\left\langle \nabla_{h}\cdot \Big( {\cal A} ( \frac{\rho^{\star}+\bar{\rho}_{0} }{ {\cal A} |\nabla_{h}^\delta \phi^{\star}|} ) \nabla_h \phi^{\star} \Big) ,\varphi\right\rangle_{\Omega}\nonumber\\
&-\alpha\left\langle {\cal A} |\nabla_{h}^\delta \phi^{\star}|,\psi\right\rangle_{\Omega}
+\alpha (\sqrt{2}-1 )\left\langle\rho^{\star}+\bar{\rho}_{0},\psi\right\rangle_{\Omega}
-\frac{\alpha}{\delta}\left\langle\Delta_{h}\phi^{\star},\varphi\right\rangle_{\Omega} 
+ \langle f_\phi^n ,\varphi \rangle_{\Omega} 
+ \langle f_\rho^n, \psi \rangle_{\Omega} . 
\end{align}
Here, we take the direction $\varphi,\psi\in\mathring{C}_{per}$, such that
\begin{equation*}
\varphi = 0,\quad\psi=\delta_{i,i_{0}}\delta_{j,j_{0}}-\delta_{i,i_{1}}\delta_{j,j_{1}}.
\end{equation*}
Then the derivative may be expressed as
\begin{align}\label{estimate_direction}
&\frac{1}{h^{2}}\mathrm{d}_{s}\mathcal{F}^{n}\left(\phi^{\star},\rho^{\star}+s\psi\right)|_{s=0}\nonumber\\
=&\frac{1}{\mathcal{M}\Delta t}\left(-\Delta_{h}\right)^{-1}
\left(\rho^{\star}+\bar{\rho}_{0}-\rho^{n}\right)_{\vec{\alpha}_{0}}
-\frac{1}{\mathcal{M}\Delta t}\left(-\Delta_{h}\right)^{-1}
\left(\rho^{\star}+\bar{\rho}_{0}-\rho^{n}\right)_{\vec{\alpha}_{1}}
-\xi\left(\Delta_{h}\rho_{\vec{\alpha}_{0}}^{\star}-\Delta_{h}\rho_{\vec{\alpha}_{1}}^{\star}\right)\nonumber\\
&+\beta\ln\left(\rho_{\vec{\alpha}_{0}}^{\star}+\bar{\rho}_{0}\right)
-\beta\ln\left(1-\rho_{\vec{\alpha}_{0}}^{\star}-\bar{\rho}_{0}\right)
-\beta\ln\left(\rho_{\vec{\alpha}_{1}}^{\star}+\bar{\rho}_{0}\right)
+\beta\ln\left(1-\rho_{\vec{\alpha}_{1}}^{\star}-\bar{\rho}_{0}\right)\nonumber\\
&-\alpha\left( {\cal A} |\nabla_{h}^\delta \phi_{\vec{\alpha}_{0}}^{\star}| 
- {\cal A} |\nabla_{h}^\delta \phi_{\vec{\alpha}_{1}}^{\star}|\right)
+\alpha (\sqrt{2}-1 )\left(\rho_{\vec{\alpha}_{0}}^{\star}-\rho_{\vec{\alpha}_{1}}^{\star}\right)
 + ( f_\rho^n )_{\vec{\alpha}_{0}} - (f_\rho^n )_{\vec{\alpha}_{1}} . 
\end{align}
For simplicity, now let us write $\varrho^{\star}:=\rho^{\star}+\bar{\rho}_{0}$. Since 
$\varrho_{\vec{\alpha}_{0}}^{\star}=\delta_0$ and $\varrho_{\vec{\alpha}_{1}}^{\star}\geq\bar{\rho}_{0}$, we have
\begin{equation}\label{estimate_log}
\ln\left(\varrho_{\vec{\alpha}_{0}}^{\star}\right)
-\ln\left(1-\varrho_{\vec{\alpha}_{0}}^{\star}\right)
-\ln\left(\varrho_{\vec{\alpha}_{1}}^{\star}\right)
+\ln\left(1-\varrho_{\vec{\alpha}_{1}}^{\star}\right) 
\le \ln \frac{\delta_0}{1-\delta_0}
-\ln\frac{\bar{\rho}_{0}}{1-\bar{\rho}_{0}}.
\end{equation}
Since $\varrho^{\star}$ takes a minimum at the grid point $\vec{\alpha}_{0}$, with 
$\varrho_{\vec{\alpha}_{0}}^{\star}=\delta\leq\varrho_{i,j}^{\star}$, for any $\left(i,j\right)$, 
and a maximum at the grid point $\vec{\alpha}_{1}$, with 
$\varrho_{\vec{\alpha}_{1}}^{\star}\geq\varrho_{i,j}^{\star}$, for any $\left(i,j\right)$,
\begin{equation}\label{estimate_star}
\Delta_{h}\rho_{\vec{\alpha}_{0}}^{\star}\geq 0,\quad
\Delta_{h}\rho_{\vec{\alpha}_{1}}^{\star}\leq 0,\quad
\rho_{\vec{\alpha}_{0}}^{\star}-\rho_{\vec{\alpha}_{1}}^{\star}\leq 0.
\end{equation}
For the numerical solution $\rho^{n}$ at the previous time step, the a priori assumption 
$\left\|\rho^{n}\right\|_{\infty}\leq M$ indicates that
\begin{equation}\label{estimate_n}
-2M\leq\rho_{\vec{\alpha}_{0}}^{n}-\rho_{\vec{\alpha}_{1}}^{n}\leq 2M.
\end{equation}
According to Lemma~\ref{lem: Poisson}, we obtain
\begin{equation}\label{estimate_lap}
- 4M C_2 \leq\left(-\Delta_{h}\right)^{-1}\left(\varrho^{\star}-\rho^{n}\right)_{\vec{\alpha}_{0}}
-\left(-\Delta_{h}\right)^{-1}\left(\varrho^{\star}-\rho^{n}\right)_{\vec{\alpha}_{1}}\leq 4M C_2.
\end{equation}
Denote $C_{3}= \max\left\{ {\cal A} |\nabla_{h}^\delta \phi_{\vec{\alpha}_{0}}^{\star}|, {\cal A} |\nabla_{h}^\delta \phi_{\vec{\alpha}_{1}}^{\star}|\right\}$. Based on the fact that $-A^* - \bar{\phi}_0 < \phi^* < A^*- \bar{\phi}_0$ at a point-wise level, we conclude that 
\begin{equation} 
    {\cal A} |\nabla_{h}^\delta \phi_{\vec{\alpha}_{0}}^{\star}|,\, \, 
    {\cal A} |\nabla_{h}^\delta \phi_{\vec{\alpha}_{1}}^{\star} |  
    \le \frac{2 A^*}{h} + 1 ,   \quad 
    \mbox{so that} \, \, \, C_3 \le \frac{2 A^*}{h} + 1 . 
\end{equation}     
Then we have 
\begin{equation}\label{estimate_phi-1}
-\alpha C_{3}\leq
-\alpha {\cal A} \left(|\nabla_{h}^\delta \phi_{\vec{\alpha}_{0}}^{\star}| 
- |\nabla_{h}^\delta \phi_{\vec{\alpha}_{1}}^{\star}|\right) \le \alpha C_{3}.
\end{equation}
Consequently, a substitution of $\eqref{estimate_log},\eqref{estimate_star},\eqref{estimate_n},\eqref{estimate_lap},\eqref{estimate_phi-1}$ into $\eqref{estimate_direction}$ yields 
the following bound on the directional derivative:
\begin{equation*}
\frac{1}{h^{2}}\mathrm{d}_{s}\mathcal{F}^{n}\left(\phi^{\star},\rho^{\star}+s\psi\right)|_{s=0}
\leq \beta \ln \frac{\delta_0}{1-\delta_0} - \beta\ln\frac{\bar{\rho}_{0}}{1-\bar{\rho}_{0}}
+ 4M C_2 (\mathcal{M}\Delta t)^{-1}+2\alpha C_{3}+ 2 M_2 .
\end{equation*}
We denote $D_0 = 4M C_2 (\mathcal{M}\Delta t)^{-1}+2\alpha C_{3}+ 2 M_2$. Notice that $D_0$ is a constant for fixed $\Delta t$ and $h$, though it becomes singular as $\Delta t\to 0$ and $h\to 0$. On the other hand, for any fixed $\Delta t$ and $h$, we may choose $\delta_0 \in\left(0,1/2\right)$ sufficiently small so that
\begin{equation}\label{estimate_C}
\beta \ln \frac{\delta_0}{1-\delta_0} - \beta\ln\frac{\bar{\rho}_{0}}{1-\bar{\rho}_{0}} 
+ D_0 < 0.
\end{equation}
This in turn leads to the following inequality, provided $\delta_0$ satisfies $\eqref{estimate_C}$,
\begin{equation}
\frac{1}{h^{2}}\mathrm{d}_{s}\mathcal{F}^{n}\left(\phi^{\star},\rho^{\star}+s\psi\right)|_{s=0} < 0.
\end{equation}
As before, this contradicts the assumption that $\mathcal{F}^{n}$ has a minimum at $\left(\phi^{\star},\rho^{\star}\right)$, since the directional derivative is negative in a direction pointing into the interior of $\mathring{A}_{h,\delta_0}$.

Using very similar arguments, we can also prove that the global minimum of $\mathcal{F}^{n}$ over $\mathring{A}_{h,\delta_0}$ could not occur at a boundary point $\left(\phi^{\star},\rho^{\star}\right)$ such that $\rho_{\vec{\alpha}_{0}}^{\star}+\bar{\rho}_{0}=1-\delta_0$, for some $\vec{\alpha}_{0}$, so that the grid function $\rho^{\star}$ has a global maximum at $\vec{\alpha}_{0}$. The details are left to interested readers. 

Moreover, if the global minimum of $\mathcal{F}^{n}$ over $\mathring{A}_{h,\delta_0}$ could occurs at a boundary point $\left(\phi^{\star},\rho^{\star}\right)$ such that $\varphi_{\vec{\alpha}_{0}}^* = \phi_{\vec{\alpha}_{0}}^{\star}+\bar{\phi}_{0}=A^*-\delta_0$. In turn, we apply Lemma~\ref{lem: J} and obtain 
\begin{eqnarray}  
\begin{aligned} 
  \mathcal{F}^{n} (\phi^* , \rho^*) = & \mathcal{J}^{n} (\varphi^*, \varrho^*)  
  \ge  \frac{1}{8 \varepsilon} \| \varphi^* -\frac{1}{2} \|_4^4  - M_3^n  
  \ge  \frac{1}{8 \varepsilon} h^2 \cdot ( \varphi_{\vec{\alpha}_{0}}^* - \frac12 )^4 
    - M_3^n   
\\
  \ge & 
     \frac{1}{8 \varepsilon} h^2 ( A^* - \delta_0 - \frac12 )^4 - M_3^n     
     >  \frac{1}{8 \varepsilon} h^2 ( A^* - 1 )^4 - M_3^n     
\\
  \ge & 
     \frac{1}{8 \varepsilon} h^2 \cdot 
     ( 8  \varepsilon (M_0^n + M_3 ^n ) ) h^{-2} - M_3^n  
     = M_0^n = \mathcal{J}^{n} ( \phi^n, \rho^n)  , 
\end{aligned} 
\label{positivity-2-1} 
\end{eqnarray}
in which the definition of $A^*$ (in~\eqref{defi-A-delta}) has been recalled. This contradicts the assumption that $\mathcal{F}^{n}$ has a minimum at $\left(\phi^{\star},\rho^{\star}\right)$.  

Using similar arguments, a minimization point cannot occur at a boundary point $\left(\phi^{\star},\rho^{\star}\right)$ such that $\varphi_{\vec{\alpha}_{0}}^* = \phi_{\vec{\alpha}_{0}}^{\star}+\bar{\phi}_{0}=-A^*+ \delta_0$. The details are left to interested readers. 

A combination of above four facts have indicated that, the global minimum of $\mathcal{F}^{n}$ over $\mathring{A}_{h,\delta_0}$ could only possibly occur at interior point $\left(\phi,\rho\right)\in\left(\mathring{A}_{h,\delta_0}\right)^{\circ} \subset\left(\mathring{A}_{h}\right)^{\circ}$. We conclude that there must be a solution $\left(\phi,\rho\right)\in A_{h}$ that minimizes $\mathcal{J}^{n}$ over $A_{h}$, which is equivalent to the numerical solution of $\eqref{cs_1st}$. The existence of the numerical solution is established. 

In addition, since $\mathcal{J}^{n}$ is strictly convex function over $A_{h}$, the uniqueness analysis for this numerical solution is straightforward. The proof of Theorem~\ref{thm: positivity} is completed.
\end{proof}
\subsection{Unconditional energy stability}
\begin{theorem}[Energy stability] \label{thm: energy stability}  
For $n\geq 1$, the numerical scheme $\eqref{cs_1st}$ is unconditionally energy stable, i.e. 
\begin{equation*}
E\left(\phi^{n+1},\rho^{n+1}\right)\leq E\left(\phi^{n},\rho^{n}\right).
\end{equation*}
\end{theorem}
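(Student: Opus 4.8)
The plan is to read off the discrete energy dissipation directly from the convex-concave splitting $E = E_c - E_e$, reproducing the continuous variational inequality~\eqref{une} at the fully discrete level. Two ingredients from the earlier development are essential. First, the discrete functionals $E_c$ and $E_e$ are both convex in the joint variable $(\phi,\rho)$; this was recorded through the diagonal dominance of the $5\times5$ Hessian associated with $e_{c_2,h}$, together with the elementary convexity of the remaining quartic, quadratic, gradient, and logarithmic pieces. Second, the positivity guarantee $0<\rho^{n+1}<1$ from Theorem~\ref{thm: positivity} ensures that $E(\phi^{n+1},\rho^{n+1})$, and in particular the variational derivatives involving $\ln\rho$ and $\ln(1-\rho)$, are well-defined at the new time level.

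The central step is to establish the fully discrete analogue of~\eqref{une},
\[
\begin{aligned}
E(\phi^{n+1},\rho^{n+1}) - E(\phi^n,\rho^n)
\le{}& \langle \delta_\phi E_c(\phi^{n+1},\rho^{n+1}) - \delta_\phi E_e(\phi^n,\rho^n),\, \phi^{n+1}-\phi^n\rangle_\Omega \\
&+ \langle \delta_\rho E_c(\phi^{n+1},\rho^{n+1}) - \delta_\rho E_e(\phi^n,\rho^n),\, \rho^{n+1}-\rho^n\rangle_\Omega .
\end{aligned}
\]
I would derive this by adding two gradient inequalities for convex functions of two variables. Convexity of $E_c$, with the tangent plane taken at $(\phi^{n+1},\rho^{n+1})$, bounds $E_c(\phi^{n+1},\rho^{n+1}) - E_c(\phi^n,\rho^n)$ above by $\langle \delta_\phi E_c(\phi^{n+1},\rho^{n+1}), \phi^{n+1}-\phi^n\rangle_\Omega + \langle \delta_\rho E_c(\phi^{n+1},\rho^{n+1}), \rho^{n+1}-\rho^n\rangle_\Omega$, while convexity of $E_e$, with the tangent plane taken at $(\phi^n,\rho^n)$, bounds $E_e(\phi^{n+1},\rho^{n+1}) - E_e(\phi^n,\rho^n)$ below by $\langle \delta_\phi E_e(\phi^n,\rho^n), \phi^{n+1}-\phi^n\rangle_\Omega + \langle \delta_\rho E_e(\phi^n,\rho^n), \rho^{n+1}-\rho^n\rangle_\Omega$. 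Subtracting the second from the first and using $E=E_c-E_e$ produces the claimed inequality; this is precisely the two-variable generalization of the estimate in~\cite{Wise2009An}.

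It then remains to combine this inequality with the scheme. By the definitions of the chemical potentials in~\eqref{cs_1st_mu1} and~\eqref{cs_1st_mu2}, the right-hand side above equals $\langle \mu_\phi^{n+1}, \phi^{n+1}-\phi^n\rangle_\Omega + \langle \mu_\rho^{n+1}, \rho^{n+1}-\rho^n\rangle_\Omega$. Inserting the update relations~\eqref{cs_1st_phi} and~\eqref{cs_1st_rho}, i.e. $\phi^{n+1}-\phi^n = \mathcal{M}\Delta t\,\Delta_h \mu_\phi^{n+1}$ and $\rho^{n+1}-\rho^n = \mathcal{M}\Delta t\,\Delta_h \mu_\rho^{n+1}$, and applying summation-by-parts under the periodic boundary conditions, each inner product $\langle \mu, \Delta_h \mu\rangle_\Omega$ becomes $-\|\nabla_h \mu\|_2^2$. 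Hence $E(\phi^{n+1},\rho^{n+1}) - E(\phi^n,\rho^n) \le -\mathcal{M}\Delta t\big(\|\nabla_h \mu_\phi^{n+1}\|_2^2 + \|\nabla_h \mu_\rho^{n+1}\|_2^2\big) \le 0$, which is the asserted unconditional stability, valid for any $\Delta t>0$. The main obstacle is concentrated entirely in the first step: because $E_c$ couples $\phi$ and $\rho$ through the term $e_{c_2,h}$, the gradient inequality must be applied to a genuinely multivariate convex function, so the joint convexity secured via the positive semidefiniteness of the Hessian is what makes the argument go through; once that is in hand, the substitution and summation-by-parts are routine.
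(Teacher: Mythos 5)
Your proposal is correct and follows essentially the same route as the paper: both arguments rest on the fully discrete convex-splitting inequality (the two-variable tangent-plane estimate generalizing \cite{Wise2009An}) applied to $E=E_c-E_e$, combined with the scheme's update relations and summation by parts; the paper merely tests the equations against $(-\Delta_h)^{-1}(\phi^{n+1}-\phi^n)$ and $(-\Delta_h)^{-1}(\rho^{n+1}-\rho^n)$ so that the dissipation shows up as $\frac{1}{\mathcal{M}\Delta t}\|\cdot\|_{-1,h}^2$ of the increments, whereas you substitute the updates to obtain $-\mathcal{M}\Delta t\,\|\nabla_h\mu^{n+1}\|_2^2$, which is the same algebra. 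Your write-up is in fact slightly more explicit than the paper's, since you derive the discrete analogue of~\eqref{une} from the two gradient inequalities rather than invoking it implicitly.
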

\begin{proof}

Let $\mathcal{L}=-\Delta_{h}$. Due to the mass conservation, $\mathcal{L}^{-1}\left(\phi^{n+1}-\phi^{n}\right)$ and $\mathcal{L}^{-1}\left(\rho^{n+1}-\rho^{n}\right)$ 
are well-defined. Taking a discrete inner product with $\eqref{cs_1st_phi}$,  $\eqref{cs_1st_rho}$, $\eqref{cs_1st_mu1}$, $\eqref{cs_1st_mu2}$ by $\mathcal{L}^{-1}\left(\phi^{n+1}-\phi^{n}\right)$, $\mathcal{L}^{-1}\left(\rho^{n+1}-\rho^{n}\right)$, $\phi^{n+1}-\phi^{n}$ and $\rho^{n+1}-\rho^{n}$, respectively, yields the following estimate 
\begin{align}
0 =& \frac{1}{\mathcal{M}\Delta t}\left\langle\phi^{n+1}-\phi^{n}, 
\mathcal{L}^{-1}\left(\phi^{n+1}-\phi^{n}\right)\right\rangle_{\Omega}
+\frac{1}{\mathcal{M}\Delta t}\left\langle\rho^{n+1}-\rho^{n}, 
\mathcal{L}^{-1}\left(\rho^{n+1}-\rho^{n}\right)\right\rangle_{\Omega}\nonumber\\
   &+\left\langle\delta_{\phi} E_{c}\left(\phi^{n+1},\rho^{n+1}\right)
   -\delta_{\phi}E_{e}\left(\phi^{n},\rho^{n}\right),\phi^{n+1}-\phi^{n}\right\rangle_{\Omega}\nonumber\\
   &+\left\langle\delta_{\rho} E_{c}\left(\phi^{n+1},\rho^{n+1}\right)
   -\delta_{\rho}E_{e}\left(\phi^{n},\rho^{n}\right),\rho^{n+1}-\rho^{n}\right\rangle_{\Omega}\nonumber\\
\geq & E\left(\phi^{n+1},\rho^{n+1}\right)-E\left(\phi^{n},\rho^{n}\right).\label{1st_energy_product}
\end{align}
Hence that
\begin{equation*}
E\left(\phi^{n+1},\rho^{n+1}\right)\leq E\left(\phi^{n},\rho^{n}\right).
\end{equation*}
This completes the proof.
\end{proof}

\subsection{Optimal rate convergence analysis}
Let $\Phi$ and $\Psi$ be the exact solution for the binary fluid-surfactant system $\eqref{bfs}$.
With initial data with sufficient regularity, we could assume that the exact solution 
has regularity of class $\mathcal{R}_{1}$ and $\mathcal{R}_{2}$:
\begin{subequations}\label{regular}
\begin{align}
&\Phi\in \mathcal{R}_{1}:=H^{2}\left(0,T;C_{per}\left(\Omega\right)\right)
\cap L^{\infty}\left(0,T;C_{per}^{8}\left(\Omega\right)\right),\\
&\Psi\in \mathcal{R}_{2}:=H^{2}\left(0,T;C_{per}\left(\Omega\right)\right)
\cap L^{\infty}\left(0,T;C_{per}^{6}\left(\Omega\right)\right).
\end{align}
\end{subequations}
Define $\Phi_{N}\left(\cdot,t\right):=\mathcal{P}_{N}\Phi\left(\cdot,t\right)$, and $\Psi_{N}\left(\cdot,t\right):=\mathcal{P}_{N}\Psi\left(\cdot,t\right)$, 
the spatial Fourier projection of the exact solutions into $\mathcal{B}^{K}$, the space of trigonometric polynomials of degree to and including $K$ (with $N=2K+1$). 
The following projection approximation is standard: if $\Phi \in L^{\infty}\left(0,T;H_{per}^{l}\left(\Omega\right)\right)$ 
for some $l\in \mathbb{N}$,
\begin{equation}\label{err_for_Phi}
\left\|\Phi_{N}-\Phi\right\|_{L^{\infty}\left(0,T;H^{k}\left(\Omega\right)\right)}
\leq Ch^{l-k}\left\|\Phi\right\|_{L^{\infty}\left(0,T;H^{l}\left(\Omega\right)\right)},\quad 0\leq k\leq l.
\end{equation}
By $\Phi_{N}^{m},\Phi^{m}$, 
we denote $\Phi_{N}\left(\cdot,t_{m}\right)$ and $\Phi\left(\cdot,t_{m}\right)$, respectively, 
with $t_{m}=m\cdot\Delta t$. Since $\Phi_{N}\in\mathcal{B}^{m}$, 
the mass conservative property is available at the discrete level:
\begin{equation*}
\overline{\Phi_{N}^{m}}
=\frac{1}{|\Omega|}\int_{\Omega}\Phi_{N}\left(\cdot,t_{m}\right)\mathrm{d}\bm{x}
=\frac{1}{|\Omega|}\int_{\Omega}\Phi_{N}\left(\cdot,t_{m+1}\right)\mathrm{d}\bm{x}
=\overline{\Phi_{N}^{m+1}}, \quad m\in\mathbb{N}.
\end{equation*}
We have a similar result about $\Psi$. On the other hand, the solution of $\eqref{bfs}$ is also mass conservative at the discrete level:
\begin{equation}
\overline{\phi^{m}}=\overline{\phi^{m+1}}, \quad
\overline{\rho^{m}}=\overline{\rho^{m+1}}, \quad m\in\mathbb{N}.
\end{equation}
As indicated before, we use the mass conservative interpolation for the initial data: 
$\phi^{0}=\mathcal{P}_{h}\Phi_{N}\left(\cdot,t=0\right)$ and 
$\rho^{0}=\mathcal{P}_{h}\Psi_{N}\left(\cdot,t=0\right)$, that is 
\begin{equation}
\phi^{0}_{i,j} = \mathcal{P}_{h} ( \Phi_{N} ( \cdot, t=0) )_{i,j} 
:=\Phi_{N}\left(x_{i}, y_{j},t=0\right),\quad
\rho^{0}_{i,j} = \mathcal{P}_{h} ( \Psi_{N} ( \cdot, t=0) )_{i,j} 
 :=\Psi_{N}\left( x_{i}, y_{j},t=0\right).  \label{interpolation-1} 
\end{equation}
The error grid function is defined as 
\begin{equation}
\tilde{\phi}^{m}:=\mathcal{P}_{h}\Phi_{N}^{m}-\phi^{m}, \quad
\tilde{\rho}^{m}:=\mathcal{P}_{h}\Psi_{N}^{m}-\rho^{m}, \quad m\in\mathbb{N} , 
\label{error function-1} 
\end{equation}
in which a similar interpolation formula could be applied to ${\cal P}_h$ as in~\eqref{interpolation-1}. Therefore, it follows that $\overline{\tilde{\phi}^{m}}=0$ and $\overline{\tilde{\rho}^{m}}=0$, for any $m\in\mathbb{N}$, so that the discrete norm $\left\|\cdot\right\|_{-1,h}$ is well defined for the numerical error grid function.
\begin{theorem} \label{thm: convergence} 
Given initial data $\Phi(\cdot,t=0)\in C_{per}^{8}(\Omega)$ and $\Psi(\cdot,t=0)\in C_{per}^{6}(\Omega)$, 
suppose the exact solutin for binary fluid-surfactant system $\eqref{bfs}$ is of regularity class $\mathcal{R}=\mathcal{R}_{1}\times\mathcal{R}_{2}$.
Then, provided that $\Delta t$ is sufficiently small, for all positive integers n, such that $t_n\leq T$, we have
\begin{equation}
\|\tilde{\phi}^{n+1}\|_{-1,h} + \|\tilde{\rho}^{n+1}\|_{-1,h} 
+\left(\mathcal{M}\Delta t\sum_{k=0}^{n+1}
\left(\eta^{2}\|\Delta_{h}\tilde{\phi}^{k}\|_{2}^{2}
+\varepsilon\|\nabla_{h}\tilde{\phi}^{k}\|_{2}^{2}
+\xi\|\nabla_{h}\tilde{\rho}^{k}\|_{2}^{2}\right)\right)^{\frac{1}{2}}
\leq C\left(\Delta t+h^{2}\right) , 
\end{equation}
where $C>0$ is independent of n, $\Delta t$, and h.
\end{theorem}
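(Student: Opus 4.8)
The plan is a consistency--stability argument carried out in the $\|\cdot\|_{-1,h}$ norm, in which every nonlinear and singular term is absorbed through the monotonicity of $\nabla E_c$ rather than estimated directly. First I would establish consistency: inserting the interpolated Fourier projection $(\mathcal{P}_h\Phi_N,\mathcal{P}_h\Psi_N)$ of the exact solution into the scheme \eqref{cs_1st}, and using the regularity classes in \eqref{regular} together with the projection bound \eqref{err_for_Phi}, one verifies that the projected solution satisfies \eqref{cs_1st} up to mean-zero local truncation errors $\tau_\phi^{n+1},\tau_\rho^{n+1}$ with $\|\tau_\phi^{n+1}\|_{-1,h}+\|\tau_\rho^{n+1}\|_{-1,h}\le C(\Delta t+h^2)$. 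The $O(\Delta t)$ part reflects the first-order backward/convex-splitting time stepping and the explicit treatment of $\delta E_e$, while the $O(h^2)$ part comes from the centered-difference operators. Subtracting the numerical scheme then yields error equations for $(\tilde\phi^{n+1},\tilde\rho^{n+1})$ forced by $(\tau_\phi^{n+1},\tau_\rho^{n+1})$.

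Next I would take the discrete inner products of the $\phi$- and $\rho$-error equations with $\mathcal{L}^{-1}\tilde\phi^{n+1}$ and $\mathcal{L}^{-1}\tilde\rho^{n+1}$ respectively, where $\mathcal{L}=-\Delta_h$ and the inverses are well-defined since $\overline{\tilde\phi^{m}}=\overline{\tilde\rho^{m}}=0$. Using the identity $\langle\Delta_h g,\mathcal{L}^{-1}w\rangle=-\langle g,w\rangle$, the time-difference terms telescope into $\frac{1}{2\mathcal{M}\Delta t}(\|\tilde\phi^{n+1}\|_{-1,h}^2-\|\tilde\phi^{n}\|_{-1,h}^2+\|\tilde\phi^{n+1}-\tilde\phi^{n}\|_{-1,h}^2)$ and its $\rho$-analogue, while the chemical-potential contributions collapse into the combined inner product $\langle\delta_\phi E_c(v^{n+1})-\delta_\phi E_c(u^{n+1}),\tilde\phi^{n+1}\rangle+\langle\delta_\rho E_c(v^{n+1})-\delta_\rho E_c(u^{n+1}),\tilde\rho^{n+1}\rangle$ minus the analogous $E_e$-terms evaluated at level $n$, where $v=(\mathcal{P}_h\Phi_N,\mathcal{P}_h\Psi_N)$ and $u=(\phi,\rho)$.

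The key step is that the $E_c$-block equals $\langle\nabla E_c(v^{n+1})-\nabla E_c(u^{n+1}),v^{n+1}-u^{n+1}\rangle\ge 0$ by convexity of $E_c$. Splitting $E_c$ into its quadratic part and its nonlinear part (the quartic $(\phi-\tfrac12)^4$, the logarithmic $H$, and the coupled block $E_{c_2,h}$), the quadratic part produces exactly the good diffusive norms $\varepsilon\|\nabla_h\tilde\phi^{n+1}\|_2^2+\eta^2\|\Delta_h\tilde\phi^{n+1}\|_2^2+\xi\|\nabla_h\tilde\rho^{n+1}\|_2^2$ on the left-hand side of the theorem, whereas the nonlinear part is non-negative by the pointwise monotonicity of $x\mapsto(x-\tfrac12)^3$ and of $H'$, and by the non-negative-definiteness of the Hessian of $E_{c_2,h}$. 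This is precisely the mechanism that avoids any direct estimate of the singular $1$-Laplacian and logarithmic error terms, and it is what makes the bi-harmonic regularization essential, since it supplies the $\|\Delta_h\tilde\phi^{n+1}\|_2^2$ control. I expect the main obstacle to be the verification that both $u$ and $v$, hence the entire segment between them, lie in the domain of convexity, i.e. that their $\rho$-components are pointwise in $(0,1)$: for $\rho^{n+1}$ this is supplied by the positivity-preserving property (Theorem~\ref{thm: positivity}), while for $\mathcal{P}_h\Psi_N^{n+1}$ it follows from the separation $\epsilon_0\le\Psi\le1-\epsilon_0$ of the exact solution together with the $O(h^2)$ projection bound, for $h$ sufficiently small.

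Finally, the $E_e$-block (at level $n$) contributes only linear terms. The matched quadratic blocks $\frac{\alpha}{2\delta}\|\nabla_h\phi\|^2$ and $\frac{(\sqrt2-1)\alpha}{2}\rho^2$, which appear in both $E_c$ and $E_e$, combine with their implicit counterparts into telescoping differences $\frac{\alpha}{2\delta}(\|\nabla_h\tilde\phi^{n+1}\|_2^2-\|\nabla_h\tilde\phi^{n}\|_2^2)$ and $\frac{(\sqrt2-1)\alpha}{2}(\|\tilde\rho^{n+1}\|_2^2-\|\tilde\rho^{n}\|_2^2)$, which sum harmlessly; this is exactly the purpose of adding those regularizing quadratics symmetrically to the decomposition. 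The only genuine Gronwall term is the concave double-well remnant $\frac{1}{4\varepsilon}\langle\tilde\phi^{n},\tilde\phi^{n+1}\rangle$, which I would control with Young's inequality and the discrete interpolation inequality $\|w\|_2^2\le C\|w\|_{-1,h}\|\nabla_h w\|_2$, absorbing the gradient factor into $\varepsilon\|\nabla_h\tilde\phi\|_2^2$ and leaving a remainder $C\Delta t(\|\tilde\phi^{n+1}\|_{-1,h}^2+\|\tilde\phi^{n}\|_{-1,h}^2)$. The truncation terms are handled by $\langle\tau_\phi^{n+1},\tilde\phi^{n+1}\rangle_{-1,h}\le\frac12\|\tau_\phi^{n+1}\|_{-1,h}^2+\frac12\|\tilde\phi^{n+1}\|_{-1,h}^2$ and its $\rho$-counterpart. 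Multiplying the resulting inequality by $2\mathcal{M}\Delta t$, summing from $k=0$ to $n$ (with $\tilde\phi^0=\tilde\rho^0=0$ by the interpolated initial data \eqref{interpolation-1}), and applying the discrete Gronwall inequality for $\Delta t$ sufficiently small then yields the claimed $O(\Delta t+h^2)$ bound on $\|\tilde\phi^{n+1}\|_{-1,h}+\|\tilde\rho^{n+1}\|_{-1,h}$ together with the accumulated diffusive norms.
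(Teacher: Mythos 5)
Most of your plan coincides with the paper's own proof: the same consistency analysis with $\|\tau\|_{-1,h}\le C(\Delta t+h^2)$, the same testing of the error equations against $\mathcal{L}^{-1}\tilde\phi^{n+1}$ and $\mathcal{L}^{-1}\tilde\rho^{n+1}$, the same use of monotonicity of $(\phi-\tfrac12)^3$ and $H'$ and of the convexity of the coupled block $E_{c_2,h}$ to discard every nonlinear and singular error term, the same Young-plus-duality treatment of the concave double-well remnant and of the truncation terms, and a concluding discrete Gronwall argument. The genuine gap is in your last paragraph, where you handle the explicit terms $\frac{\alpha}{\delta}\langle\nabla_h\tilde\phi^{n+1},\nabla_h\tilde\phi^{n}\rangle_\Omega$ and $(\sqrt2-1)\alpha\langle\tilde\rho^{n+1},\tilde\rho^{n}\rangle_\Omega$ by ``telescoping against their implicit counterparts.'' Those implicit counterparts --- the positive quantities $\frac{\alpha}{\delta}\|\nabla_h\tilde\phi^{n+1}\|_2^2$ and $(\sqrt2-1)\alpha\|\tilde\rho^{n+1}\|_2^2$ generated by the matched quadratics $\frac{\alpha}{2\delta}|\nabla_h\phi|^2$ and $\frac{(\sqrt2-1)\alpha}{2}\rho^2$ inside $E_c$ --- are not available for this purpose, because you have already spent them: they sit inside the coupled block $E_{c_2,h}$ whose convexity you invoked to drop the $1$-Laplacian-type error terms. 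The density $\frac{\alpha}{2}\bigl(\rho-\mathcal{A}|\nabla_h^\delta\phi|\bigr)^2$ alone is \emph{not} convex; for instance its second derivative in the $\phi_x$-direction at $\nabla_h\phi=0$ equals $-\alpha(\rho-\delta)/\delta\approx-\alpha\rho/\delta$. The matched quadratics are exactly what make the Hessian \eqref{Hessian-H-1} diagonally dominant (they supply the $\sqrt2$ and $\frac1\delta+1$ diagonal entries), so $E_{c_2,h}$ minus those quadratics loses convexity. You must therefore choose: either use the full $E_{c_2,h}$ monotonicity inner product as a single non-negative block (then the level-$(n+1)$ quadratics are consumed and nothing is left to telescope), or keep the quadratics on the left-hand side (then the remaining coupled inner product has no sign). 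Doing both, as your proposal does, counts the same terms twice.

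The repair is the paper's route: discard the entire coupled block by convexity, and estimate the two explicit terms directly. Concretely, write $-\frac{2\alpha}{\delta}\langle\tilde\phi^{n+1},\Delta_h\tilde\phi^{n}\rangle_\Omega=\frac{2\alpha}{\delta}\langle\nabla_h\tilde\phi^{n+1},\nabla_h\tilde\phi^{n}\rangle_\Omega$, apply Young's inequality to put $\frac{\varepsilon}{2}\|\nabla_h\tilde\phi^{n+1}\|_2^2$ on the left, and control the leftover $\|\nabla_h\tilde\phi^{n}\|_2^2$ through the interpolation bound $\|\nabla_h w\|_2^2\le\|w\|_{-1,h}^{2/3}\|\Delta_h w\|_2^{4/3}$ followed by Young with exponents $(3,\tfrac32)$; this yields
\begin{equation*}
-\frac{2\alpha}{\delta}\langle\tilde\phi^{n+1},\Delta_h\tilde\phi^{n}\rangle_\Omega
\le \frac{\varepsilon}{2}\|\nabla_h\tilde\phi^{n+1}\|_2^2
+\frac{8\alpha^{6}}{\eta^{4}\varepsilon^{3}\delta^{6}}\|\tilde\phi^{n}\|_{-1,h}^2
+\eta^{2}\|\Delta_h\tilde\phi^{n}\|_2^2 ,
\end{equation*}
where the last term is absorbed by the factor-two biharmonic term $2\eta^2\|\Delta_h\tilde\phi^{n+1}\|_2^2$ on the left (this is precisely why the bi-harmonic regularization is indispensable here), and similarly $2(\sqrt2-1)\alpha\langle\tilde\rho^{n+1},\tilde\rho^{n}\rangle_\Omega\le\xi\|\nabla_h\tilde\rho^{n+1}\|_2^2+\frac{\alpha^2(\sqrt2-1)^2}{\xi}\|\tilde\rho^{n}\|_{-1,h}^2$. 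These steps produce the $\delta$- and $\eta$-singular Gronwall prefactors and are the source of the hypothesis that $\Delta t$ be sufficiently small in Theorem~\ref{thm: convergence}; your telescoping, had it been legitimate, would have eliminated both features, which is a signal that it was too good to be true.
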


\begin{proof}
A carefully consistency analysis indicates the following truncation error estimate:
\begin{subequations}\label{estimate_1st}
\begin{align}
\frac{\Phi^{n+1}_{N}-\Phi^{n}_{N}}{\Delta t}
=&\mathcal{M}\Delta_{h}\left(\eta^{2}\Delta_{h}^{2}\Phi^{n+1}_{N}
-\varepsilon\Delta_{h}\Phi^{n+1}_{N}
+\frac{1}{\varepsilon} (\Phi^{n+1}_{N}-\frac{1}{2} )^{3}
+\alpha\nabla_{h} \cdot \left( {\cal A} ( \frac{\Psi^{n+1}_{N}}
{{\cal A} |\nabla_{h}^\delta \Phi^{n+1}_{N}|} ) \nabla_{h}\Phi^{n+1}_{N} \right)\right.\nonumber\\
&\left. -\alpha\left(1+\frac{1}{\delta}\right)\Delta_{h}\Phi^{n+1}_{N}
-\frac{1}{4\varepsilon} (\Phi^{n}_{N}-\frac{1}{2} )
-\frac{\alpha}{\delta}\Delta_{h}\Phi^{n}_{N}\right)+\tau_{\phi},
\label{estimate_phi} \\
\frac{\Psi^{n+1}_{N}-\Psi^{n}_{N}}{\Delta t}=&\mathcal{M}\Delta_{h}
\left(-\xi\Delta_{h}\Psi_{N}^{n+1}
+\beta H^{\prime}\left(\Psi^{n+1}_{N}\right)
+\sqrt{2}\alpha \Psi^{n+1}_{N} - \alpha {\cal A} | \nabla_h^\delta \Phi^{n+1}_{N} |
-\alpha\left(\sqrt{2}-1\right)\Psi^{n}_{N}\right)+\tau_{\rho},
\label{estimate_rho}
\end{align}
\end{subequations}
with $\left\|\tau^{n}\right\|_{-1,h}\leq C\left(\Delta t+h^{2}\right)$. Observe that we have dropped the operator $\mathcal{P}_{h}$, which should appear in front of $\Phi_{N}$, 
for simplicity of presentation. 

Subtracting the numerical scheme $\eqref{cs_1st}$ from $\eqref{estimate_1st}$ gives
\begin{subequations}\label{error_1st}
\begin{align}
\frac{\tilde{\phi}^{n+1}_{N}-\tilde{\phi}^{n}_{N}}{\Delta t}
=&\mathcal{M}\Delta_{h}\left(\eta^{2}\Delta_{h}^{2}\tilde{\phi}^{n+1}
-\varepsilon\Delta_{h}\tilde{\phi}^{n+1}
+\frac{1}{\varepsilon} (\Phi^{n+1}_{N}-\frac{1}{2} )^{3}
-\frac{1}{\varepsilon} (\phi^{n+1}-\frac{1}{2} )^{3}\right.\nonumber\\
&\left.+\alpha \nabla_{h} \cdot \left( {\cal A} ( \frac{\Psi^{n+1}_{N}}
{{\cal A} |\nabla_{h}^\delta \Phi^{n+1}_{N}|} ) \nabla_{h}\Phi^{n+1}_{N} \right) 
-\alpha \nabla_{h} \cdot \left( {\cal A} ( \frac{\rho^{n+1}}
{{\cal A} |\nabla_{h}^\delta \phi^{n+1} |} ) \nabla_{h} \phi^{n+1} \right) \right.\nonumber\\
&\left.-\alpha (1+\frac{1}{\delta} )\Delta_{h} \tilde{\phi}^{n+1} 
-\frac{1}{4\varepsilon}\tilde{\phi}^{n}
-\frac{\alpha}{\delta}\Delta_{h}\tilde{\phi}^{n}\right)+\tau_{\phi},
\label{error_phi}\\
\frac{\tilde{\rho}^{n+1}-\tilde{\rho}^{n}}{\Delta t}=
&\mathcal{M}\Delta_{h}
\left(-\xi\Delta_{h}\tilde{\rho}^{n+1}
+\beta H^{\prime}\left(\Psi^{n+1}_{N}\right)
-\beta H^{\prime}\left(\rho^{n+1}\right)
+\sqrt{2}\alpha \tilde{\rho}^{n+1}\right.\nonumber\\
&\left.-\alpha {\cal A} | \nabla_h^\delta \Phi^{n+1}_{N} | 
 + \alpha {\cal A} | \nabla_h^\delta \phi^{n+1}|
-\alpha (\sqrt{2}-1 )\tilde{\rho}^{n}\right)+\tau_{\rho}.
\label{error_rho}
\end{align}
\end{subequations}
Since the numerical error function has zero-mean, we see that both $\left(-\Delta_{h}\right)^{-1}\tilde{\phi}^{m}$ and $\left(-\Delta_{h}\right)^{-1}\tilde{\rho}^{m}$ are well-defined, 
for any $k\geq 0$. Taking a discrete inner product with $\eqref{estimate_phi}$ and $\eqref{estimate_rho}$ by  $2\left(-\mathcal{M}\Delta_{h}\right)^{-1}\tilde{\phi}^{n+1}$ 
and $2\left(-\mathcal{M}\Delta_{h}\right)^{-1}\tilde{\rho}^{n+1}$, respectively, yields
\begin{align*}
&\frac{1}{\mathcal{M}\Delta t}\left(
\|\tilde{\phi}^{n+1}\|_{-1,h}^{2}-\|\tilde{\phi}^{n}\|_{-1,h}^{2}
+\|\tilde{\phi}^{n+1}-\tilde{\phi}^{n}\|_{-1,h}^{2}
+\|\tilde{\rho}^{n+1}\|_{-1,h}^{2}-\|\tilde{\rho}^{n}\|_{-1,h}^{2}
+\|\tilde{\rho}^{n+1}-\tilde{\rho}^{n}\|_{-1,h}^{2}\right)\nonumber\\
&+2\eta^{2}\langle\tilde{\phi}^{n+1},\Delta_{h}^{2}\tilde{\phi}^{n+1}\rangle_{\Omega}
-2\varepsilon\langle\tilde{\phi}^{n+1},\Delta_{h}\tilde{\phi}^{n+1}\rangle_{\Omega}
-2\xi\langle\tilde{\rho}^{n+1},\Delta_{h}\tilde{\rho}^{n+1}\rangle_{\Omega}\nonumber\\
&+\frac{2}{\varepsilon}\left\langle\tilde{\phi}^{n+1}, (\Phi^{n+1}_{N}-\frac{1}{2} )^{3}
- (\phi^{n+1}-\frac{1}{2} )^{3}\right\rangle_{\Omega}
-2\alpha (1+\frac{1}{\delta} ) \langle\tilde{\phi}^{n+1}, \Delta_{h}\tilde{\phi}^{n+1} \rangle_{\Omega} \nonumber\\
&+2\alpha\left\langle\tilde{\phi}^{n+1}, \nabla_{h} \cdot \left( 
 {\cal A} ( \frac{\Psi^{n+1}_{N}}
{{\cal A} |\nabla_{h}^\delta \Phi^{n+1}_{N}|} ) \nabla_{h}\Phi^{n+1}_{N}  
- {\cal A} ( \frac{\rho^{n+1}}
{{\cal A} |\nabla_{h}^\delta \phi^{n+1} |} ) \nabla_{h} \phi^{n+1} \right) \right\rangle_{\Omega} 
\\
& +2\beta \left\langle\tilde{\rho}^{n+1},H^{\prime}\left(\Psi^{n+1}_{N}\right)
-H^{\prime}\left(\rho^{n+1}_{N}\right)\right\rangle_{\Omega} 
 +2\sqrt{2}\alpha\left\langle\tilde{\rho}^{n+1}, \tilde{\rho}^{n+1} \right\rangle_{\Omega}
-2\alpha \left\langle \tilde{\rho}^{n+1}, {\cal A} | \nabla_h^\delta \Phi^{n+1}_{N}| 
- {\cal A} | \nabla_h^\delta \phi^{n+1}|\right\rangle_{\Omega}\nonumber\\
=&\frac{1}{2\varepsilon} \langle\tilde{\phi}^{n+1},\tilde{\phi}^{n} \rangle_{\Omega}
+\frac{2\alpha}{\delta} \langle\tilde{\phi}^{n+1},\Delta_{h}\tilde{\phi}^{n} \rangle_{\Omega}
+2\alpha (\sqrt{2}-1 )\left\langle\tilde{\rho}^{n+1},\tilde{\rho}^{n}\right\rangle_{\Omega} \\
& 
+\frac{2}{\mathcal{M}} ( \langle\tilde{\phi}^{n+1},\tau_{\phi}^{n} \rangle_{-1,h}
+ \langle \tilde{\rho}^{n+1},\tau_{\rho}^{n} \rangle_{-1,h} ) . 
\end{align*}
The estimate for the terms associated with the surface diffusion is straightforward:
\begin{align*}
\langle\tilde{\phi}^{n+1},\Delta_{h}^{2}\tilde{\phi}^{n+1}\rangle_{\Omega}
&=\|\Delta_{h}\tilde{\phi}^{n+1}\|_{2}^{2},  \quad 
-\langle\tilde{\phi}^{n+1},\Delta_{h}\tilde{\phi}^{n+1}\rangle_{\Omega}
 =\|\nabla_{h}\tilde{\phi}^{n+1}\|_{2}^{2},\\
-\langle\tilde{\rho}^{n+1},\Delta_{h}\tilde{\rho}^{n+1}\rangle_{\Omega}
&=\|\nabla_{h}\tilde{\rho}^{n+1}\|_{2}^{2}.
\end{align*}
For the nonlinear inner product, we have the following result
\begin{equation}
2\beta \left\langle\tilde{\rho}^{n+1},H^{\prime}\left(\Psi^{n+1}_{N}\right)
-H^{\prime}\left(\rho^{n+1}_{N}\right)\right\rangle_{\Omega}\geq 0,
\end{equation}
due to the fact that the logarithmic function is an increasing function. 
Similarly, the convexity of the nonlinear functional $g_1 (\phi) = h^2 \sum_{i,j=1}^N ( \phi_{i,j} - \frac12 )^4$ and $g_2 (\phi, \rho) = h^2 \sum_{i,j=1}^N (\rho_{i,j} - {\cal A} | \nabla_h^\delta \phi_{i,j} | )^2 + (\sqrt{2} -1) \| \rho \|_2^2 + \frac{1}{\delta} \| \nabla_h \phi \|_2^2$ leads to the following inequalities: 
\begin{align}
&\frac{2}{\varepsilon} \left\langle\tilde{\phi}^{n+1}, (\Phi^{n+1}_{N}-\frac{1}{2} )^{3} - (\phi^{n+1}-\frac{1}{2} )^{3}\right\rangle_{\Omega} \ge 0 , \nonumber \\
& 2\alpha\left\langle \tilde{\phi}^{n+1},\nabla_{h} \cdot \left( 
 {\cal A} ( \frac{\Psi^{n+1}_{N}}
{{\cal A} |\nabla_{h}^\delta \Phi^{n+1}_{N}|} ) \nabla_{h}\Phi^{n+1}_{N}  
- {\cal A} ( \frac{\rho^{n+1}}
{{\cal A} |\nabla_{h}^\delta \phi^{n+1} |} ) \nabla_{h} \phi^{n+1} \right) \right\rangle_{\Omega} 
-2\alpha (1+\frac{1}{\delta} )
 \langle \tilde{\phi}^{n+1}, \Delta_{h}\tilde{\phi}^{n+1} \rangle_{\Omega} 
  \nonumber \\ 
&+2 \sqrt{2}\alpha\left\langle\tilde{\rho}^{n+1}, \tilde{\rho}^{n+1} \right\rangle_{\Omega}
-2\alpha\left\langle\tilde{\rho}^{n+1},{\cal A} | \nabla_h^\delta \Phi^{n+1}_{N}| 
- {\cal A} | \nabla_h^\delta \phi^{n+1}|\right\rangle_{\Omega} \geq 0 . 
\end{align}
Then we arrive at the following estimate: 
\begin{align*}
&\frac{1}{\mathcal{M}\Delta t}\left(
\|\tilde{\phi}^{n+1}\|_{-1,h}^{2}-\|\tilde{\phi}^{n}\|_{-1,h}^{2}
+\|\tilde{\phi}^{n+1}-\tilde{\phi}^{n}\|_{-1,h}^{2}
+\|\tilde{\rho}^{n+1}\|_{-1,h}^{2}-\|\tilde{\rho}^{n}\|_{-1,h}^{2}
+\|\tilde{\rho}^{n+1}-\tilde{\rho}^{n}\|_{-1,h}^{2}\right)\nonumber\\
&+2\eta^{2}\|\Delta_{h}\tilde{\phi}^{n+1}\|_{2}^{2}
+2\varepsilon\|\nabla_{h}\tilde{\phi}^{n+1}\|_{2}^{2}
+2\xi\|\nabla_{h}\tilde{\rho}^{n+1}\|_{2}^{2}\nonumber\\
\leq &\frac{1}{2\varepsilon}\left\langle\tilde{\phi}^{n+1},\tilde{\phi}^{n}\right\rangle_{\Omega}
+\frac{2\alpha}{\delta}\left\langle\tilde{\phi}^{n+1},\Delta_{h}\tilde{\phi}^{n}\right\rangle_{\Omega}
+2\alpha\left(\sqrt{2}-1\right)\left\langle\tilde{\rho}^{n+1},\tilde{\rho}^{n}\right\rangle_{\Omega}\nonumber
\\
&+\frac{2}{\mathcal{M}}\left\langle\tilde{\phi}^{n+1},\tau_{\phi}^{n}\right\rangle_{-1,h}
+\frac{2}{\mathcal{M}}\left\langle\tilde{\rho}^{n+1},\tau_{\rho}^{n}\right\rangle_{-1,h}.
\end{align*}
Meanwhile, for the inner product associated with the concave part, the following inequalities could be derived: 
\begin{subequations}
\begin{align}
\frac{1}{2\varepsilon}\left\langle\tilde{\phi}^{n+1},\tilde{\phi}^{n}\right\rangle_{\Omega}
&\leq\frac{\varepsilon}{2}\|\nabla_{h}\tilde{\phi}^{n+1}\|_{2}^{2}
+\frac{1}{8\varepsilon^{3}}\|\tilde{\phi}^{n}\|_{-1,h}^{2},
\\
2\left(\sqrt{2}-1\right)\alpha\left\langle\tilde{\rho}^{n+1},\tilde{\rho}^{n}\right\rangle_{\Omega}
&\leq\xi\|\nabla_{h}\tilde{\rho}^{n+1}\|_{2}^{2}
+\frac{\alpha^{2}\left(\sqrt{2}-1\right)^{2}}{\xi}\|\tilde{\rho}^{n}\|_{-1,h}^{2},
\\
\frac{2}{\mathcal{M}}\left\langle\tilde{\phi}^{n+1},\tau_{\phi}^{n}\right\rangle_{-1,h}
&\leq\frac{1}{\mathcal{M}}\|\tilde{\phi}^{n+1}\|_{-1,h}^{2}
+\frac{1}{\mathcal{M}}\|\tau_{\phi}^{n}\|_{-1,h}^{2},
\\
\frac{2}{\mathcal{M}}\left\langle\tilde{\rho}^{n+1},\tau_{\rho}^{n}\right\rangle_{-1,h}
&\leq\frac{1}{\mathcal{M}}\|\tilde{\rho}^{n+1}\|_{-1,h}^{2}
+\frac{1}{\mathcal{M}}\|\tau_{\rho}^{n}\|_{-1,h}^{2},
\\
-\frac{2\alpha}{\delta}\left\langle\tilde{\phi}^{n+1},\Delta_{h}\tilde{\phi}^{n}\right\rangle_{\Omega}
&=\frac{2\alpha}{\delta}\left\langle\nabla_{h}\tilde{\phi}^{n+1},
\nabla_{h}\tilde{\phi}^{n}\right\rangle_{\Omega}\nonumber
\\
&\leq\frac{\varepsilon}{2}\|\nabla_{h}\tilde{\phi}^{n+1}\|_{2}^{2}
+\frac{2\alpha^{2}}{\varepsilon\delta^{2}}\|\nabla_{h}\tilde{\phi}^{n}\|_{2}^{2}\nonumber
\\
&\leq\frac{\varepsilon}{2}\|\nabla_{h}\tilde{\phi}^{n+1}\|_{2}^{2}
+\frac{\alpha^{2}}{\varepsilon\delta^{2}}\|\tilde{\phi}^{n}\|_{-1}^{\frac{2}{3}}
\|\Delta_{h}\tilde{\phi}^{n}\|_{2}^{\frac{4}{3}}\nonumber
\\
&\leq\frac{\varepsilon}{2}\|\nabla_{h}\tilde{\phi}^{n+1}\|_{2}^{2}
+\frac{8\alpha^{6}}{\eta^{4}\varepsilon^{3}\delta^{6}}\|\tilde{\phi}^{n}\|_{-1,h}^{2}
+\eta^{2}\|\Delta_{h}\tilde{\phi}^{n}\|_{2}^{2}.
\end{align}
\end{subequations}
Therefore, we obtain 
\begin{align}
&\frac{1}{\mathcal{M}\Delta t}\left(
\|\tilde{\phi}^{n+1}\|_{-1,h}^{2}-\|\tilde{\phi}^{n}\|_{-1,h}^{2}
+\|\tilde{\phi}^{n+1}-\tilde{\phi}^{n}\|_{-1,h}^{2}
+\|\tilde{\rho}^{n+1}\|_{-1,h}^{2}-\|\tilde{\rho}^{n}\|_{-1,h}^{2}
+\|\tilde{\rho}^{n+1}-\tilde{\rho}^{n}\|_{-1,h}^{2}\right)\nonumber
\\
&+2\eta^{2}\|\Delta_{h}\tilde{\phi}^{n+1}\|_{2}^{2}
+2\varepsilon\|\nabla_{h}\tilde{\phi}^{n+1}\|_{2}^{2}
+2\xi\|\nabla_{h}\tilde{\rho}^{n+1}\|_{2}^{2}\nonumber
\\
\leq &\frac{\varepsilon}{2}\|\nabla_{h}\tilde{\phi}^{n+1}\|_{2}^{2}
+\frac{1}{8\varepsilon^{2}}\|\tilde{\phi}^{n}\|_{-1,h}^{2}
+\xi\|\nabla_{h}\tilde{\rho}^{n+1}\|_{2}^{2}
+\frac{\alpha^{2}\left(\sqrt{2}-1\right)^{2}}{\xi}\|\tilde{\rho}^{n}\|_{-1,h}^{2}
+\frac{1}{\mathcal{M}}\|\tilde{\phi}^{n+1}\|_{-1,h}^{2}
+\frac{1}{\mathcal{M}}\|\tau_{\phi}^{n}\|_{-1,h}^{2}\nonumber
\\
&+\frac{1}{\mathcal{M}}\|\tilde{\rho}^{n+1}\|_{-1,h}^{2}
+\frac{1}{\mathcal{M}}\|\tau_{\rho}^{n}\|_{-1,h}^{2}
+\frac{\varepsilon}{2}\|\nabla_{h}\tilde{\phi}^{n+1}\|_{2}^{2}
+\frac{8\alpha^{6}}{\eta^{4}\varepsilon^{3}\delta^{6}}\|\tilde{\phi}^{n}\|_{-1,h}^{2}
+\eta^{2}\|\Delta_{h}\tilde{\phi}^{n}\|_{2}^{2} ,
\end{align}
which in turn gives 
\begin{align}
&\frac{1}{\mathcal{M}\Delta t}\left(
\|\tilde{\phi}^{n+1}\|_{-1,h}^{2}-\|\tilde{\phi}^{n}\|_{-1,h}^{2}
+\|\tilde{\rho}^{n+1}\|_{-1,h}^{2}-\|\tilde{\rho}^{n}\|_{-1,h}^{2}
\right) \nonumber 
\\
&+2\eta^{2}\|\Delta_{h}\tilde{\phi}^{n+1}\|_{2}^{2}
+\varepsilon\|\nabla_{h}\tilde{\phi}^{n+1}\|_{2}^{2}
+\xi\|\nabla_{h}\tilde{\rho}^{n+1}\|_{2}^{2}\nonumber
\\
\leq &\left(\frac{1}{8\varepsilon^{2}}+\frac{8\alpha^{6}}{\eta^{4}\varepsilon^{3}\delta^{6}}\right)
\|\tilde{\phi}^{n}\|_{-1,h}^{2}
+\frac{\alpha^{2}\left(\sqrt{2}-1\right)^{2}}{\xi}\|\tilde{\rho}^{n}\|_{-1,h}^{2}
+\eta^{2}\|\Delta_{h}\tilde{\phi}^{n}\|_{2}^{2}\nonumber
\\
&+\frac{1}{\mathcal{M}}\|\tilde{\phi}^{n+1}\|_{-1,h}^{2}
+\frac{1}{\mathcal{M}}\|\tau_{\phi}^{n}\|_{-1,h}^{2}
+\frac{1}{\mathcal{M}}\|\tilde{\rho}^{n+1}\|_{-1,h}^{2}
+\frac{1}{\mathcal{M}}\|\tau_{\rho}^{n}\|_{-1,h}^{2}. \label{convergence-f} 
\end{align}
Finally, an application of a discrete Gronwall inequality results in the desired convergence estimate:
\begin{equation}
\|\tilde{\phi}^{n+1}\|_{-1,h}
+\|\tilde{\rho}^{n+1}\|_{-1,h}
+\left(\mathcal{M}\Delta t\sum_{k=0}^{n+1}
\left(\eta^{2}\|\Delta_{h}\tilde{\phi}^{k}\|_{2}^{2}
+\varepsilon\|\nabla_{h}\tilde{\phi}^{k}\|_{2}^{2}
+\xi\|\nabla_{h}\tilde{\rho}^{k}\|_{2}^{2}\right)\right)^{\frac{1}{2}}
\leq C\left(\Delta t+h^{2}\right) , 
\end{equation}
where $C>0$ is independent of $\Delta t, h$ and $n$. This completes the proof of Theorem~\ref{thm: convergence}. 
\end{proof}

\begin{remark} 
In the application of the discrete Gronwall inequality, we see that the growth constants for $\|\tilde{\phi}^{n}\|_{-1,h}^{2}$ and $\|\tilde{\rho}^{n}\|_{-1,h}^{2}$ terms, given by $\frac{1}{8\varepsilon^{2}}+\frac{8\alpha^{6}}{\eta^{4}\varepsilon^{3}\delta^{6}}$ and $\frac{\alpha^{2}\left(\sqrt{2}-1\right)^{2}}{\xi}$, respectively, depend singularly on $\varepsilon$, $\eta$, $\delta$ and $\xi$. In turn, it would be reasonable to require that   
$$
   \Big( \frac{1}{8\varepsilon^{2}}+\frac{8\alpha^{6}}{\eta^{4}\varepsilon^{3}\delta^{6}} \Big) {\cal M} \Delta t \le 1 ,  \quad 
   \frac{\alpha^{2}\left(\sqrt{2}-1\right)^{2}}{\xi} {\cal M} \Delta t \le 1 , 
$$
so that a singular convergence constant is avoided. In other words, the time step size $\Delta t$ should be bounded by a given constant, dependent on $\varepsilon$, $\eta$, $\delta$ and $\xi$, to present a singular convergence constant at a theoretical level; this requirement refers to the condition that ``provided that $\Delta t$ is sufficiently small" in the statement of Theorem~\ref{thm: convergence}. Meanwhile, such a requirement is only associated with a theoretical analysis, and this requirement may not be necessary in the practical computations to preserve a numerical convergence.     
\end{remark}

\begin{remark} 
As the regularization parameter $\delta \to 0$, the positivity-preserving property and the energy stability estimates, as established in Theorems~\ref{thm: positivity} and \ref{thm: energy stability}, are still valid. In fact, these two theoretical properties are available even with $\delta=0$. On the other hand, the optimal rate convergence estimate, as established in Theorem~\ref{thm: convergence}, is only available for a fixed $\delta >0$, due to the singularly-$\delta$-dependent convergence constant appearing in~\eqref{convergence-f}. In other words, the convergence constant in Theorem~\ref{thm: convergence} depends singularly on $\delta$, and such a convergence estimate would not be theoretically justified as $\delta \to 0$, although the numerical convergence has also been verified in various numerical experiments.  
\end{remark}

\begin{remark} 
In the proposed numerical scheme~\eqref{cs_1st}, we take $M_1 = M_2 = {\cal M}$, and $M (\rho) \equiv 1$, for simplicity of presentation. In case of a $\rho$-dependent mobility function $M (\rho)$, the positivity-preserving property and energy stability are still valid, as long as $M (\rho) >0$ is available at a point-wise level. Meanwhile, the corresponding convergence analysis and error estimate are expected to face certain theoretical difficulties in the case of a non-constant mobility function, due to the highly nonlinear and singular nature of the chemical potential. The theoretical justification of this convergence analysis will be left to the future works, and some techniques of rough error estimate and refined error estimate, as reported in a recent work~\cite{LiuC20a} to analyze the non-constant-mobility Poisson-Nernst-Planck system, may have to be applied in this future work.  
\end{remark} 

\section{Numerical experiments} \label{sec: numerical results} 

In this section, we preform a few two-dimensional numerical simulations using the proposed scheme $\eqref{cs_1st}$. The mass conservation, energy decay, positivity of the numerical solution, as well as the numerical accuracy, will be demonstrated in these computations. To achieve this goal, we will present two numerical examples with different initial conditions. 

\subsection{Accuracy test}

Here, we take the domain $\Omega=(0,8)^2$, and choose the parameters as follows 
\begin{align*}
&\varepsilon=0.05,\quad \alpha=0.001,\quad \beta=0.02,\quad \delta=0.001, \\
&\eta=0.05,\quad \xi=0.05,\quad M_{1}=0.01,\quad M_{2}=0.01. 
\end{align*}
The initial data are set as 
\begin{equation}\label{CS:test}
\left\{
\begin{aligned}
&\phi_{0}(x,y)=0.5+0.2\cos\frac{4\pi x}{8}\cos \frac{4\pi y}{8},\\
&\rho_{0}(x,y)=0.5+0.2\sin\frac{4\pi x}{8}\sin \frac{4\pi y}{8}.
\end{aligned}
\right.
\end{equation}
It is obvious that the initial data are subject to periodic boundary condition. 
This example is designed to study the numerical accuracy in time and space. 
In order to test the first order convergence rate in time and second order convergence rate in space, we use a linear refinement pate, i.e. $\Delta t = Ch^{2}$, $C = 0.01$. The global error is expected to be $O(\Delta t)+O(h^{2})=O(h^{2})$ under the discrete $L^{2}$ norm. Since an exact solution is not available, we compute the Cauchy difference instead of directly calculating the numerical error, which is defined as $\delta_{u} = u_{h_{f}}-\mathcal{I}_{c}^{f}(u_{h_{c}})$, where $\mathcal{I}_{c}^{f}$ is a bilinear interpolation operator. 
This requires a relatively coarse solution, parametrized by $h_{c}$, 
and a relatively fine solution, parametrized by $h_{f}$, where $h_{c}=2h_{f}$, at the same final time. The discrete $L^{2}$ norms of Cauchy difference and the convergence rates are displayed in Table $\ref{CS:error}$. These results confirm the expected convergence rate. 
\begin{table}[h!]
    \centering
    \caption{The discrete $L^{2}$ error and convergence rate at $t = 0.1$ 
    with initial data $\eqref{CS:test}$ and the given parameters.}\label{CS:error}
    \vskip 0.2cm
    \begin{tabular}{lcccc}
    \toprule
    Grid sizes    &Error$(\phi)$ & Rate & Error$(\rho)$ &Rate\\
    \midrule
    $16\times 16$    & 1.93E--01 &  --  & 1.88E--01 &  -- \\
    $32\times 32$    & 5.07E--02 & 1.93 & 4.86E--02 & 1.95\\
    $64\times 64$    & 1.28E--02 & 1.98 & 1.23E--02 & 1.99\\
    $128\times 128$  & 3.21E--03 & 2.00 & 3.07E--03 & 2.00\\
    $256\times 256$  & 8.04E--04 & 2.00 & 7.68E--04 & 2.00\\
    \bottomrule
    \end{tabular}
    \end{table}

\subsection{Spinodal decomposition}
In this example, we study the phase separation phenomenon, so called spinodal decomposition. Usually we describe this process as a thermal quench, which is considered that an initially homogeneous mixture is thrust into a two-phase region. In this case, the spinodal decomposition occurs and leads the system from the homogeneous to two-phase state. 
We take the domain as $\Omega = (0,2\pi)^2$. The initial data are given by  
\begin{equation}\label{example: spinodal}
\left\{
\begin{aligned}
&\phi_{0}(x,y)=0.4+0.1\mbox{rand}(x,y), \\
&\rho_{0}(x,y)=0.4+0.1\mbox{rand}(x,y), 
\end{aligned}
\right.
\end{equation}
where rand$(x,y)$ is a random number in  $[-1,1]$ and has zero mean. The parameters are chosen as follows 
\begin{equation}\label{eqn: parameters}
\varepsilon = 0.02,~\alpha = 0.02,~\beta = 0.02,~\eta = 0.02,~\delta = 0.01,~\xi = 0.02,~\mathcal{M} = 0.01. 
\end{equation} 
From Figures $\ref{fig: spinodal t=0}$ to $\ref{fig: spinodal t=700}$, we display the snapshots of coarsening dynamics. Initially, the two fluids are well mixed, and they sooner start to decompose and accumulate. We observe that a relatively high value of the concentration variable $\rho$ gathers at the interface between the two different fluids. A monotone decay evolution of the physical energy is illustrated in Figure $\ref{fig: energy}$. 

\begin{figure}[h]
    \centering
    \subfigure[$\phi(t=0)$]{
    \includegraphics[width=0.4\textwidth]{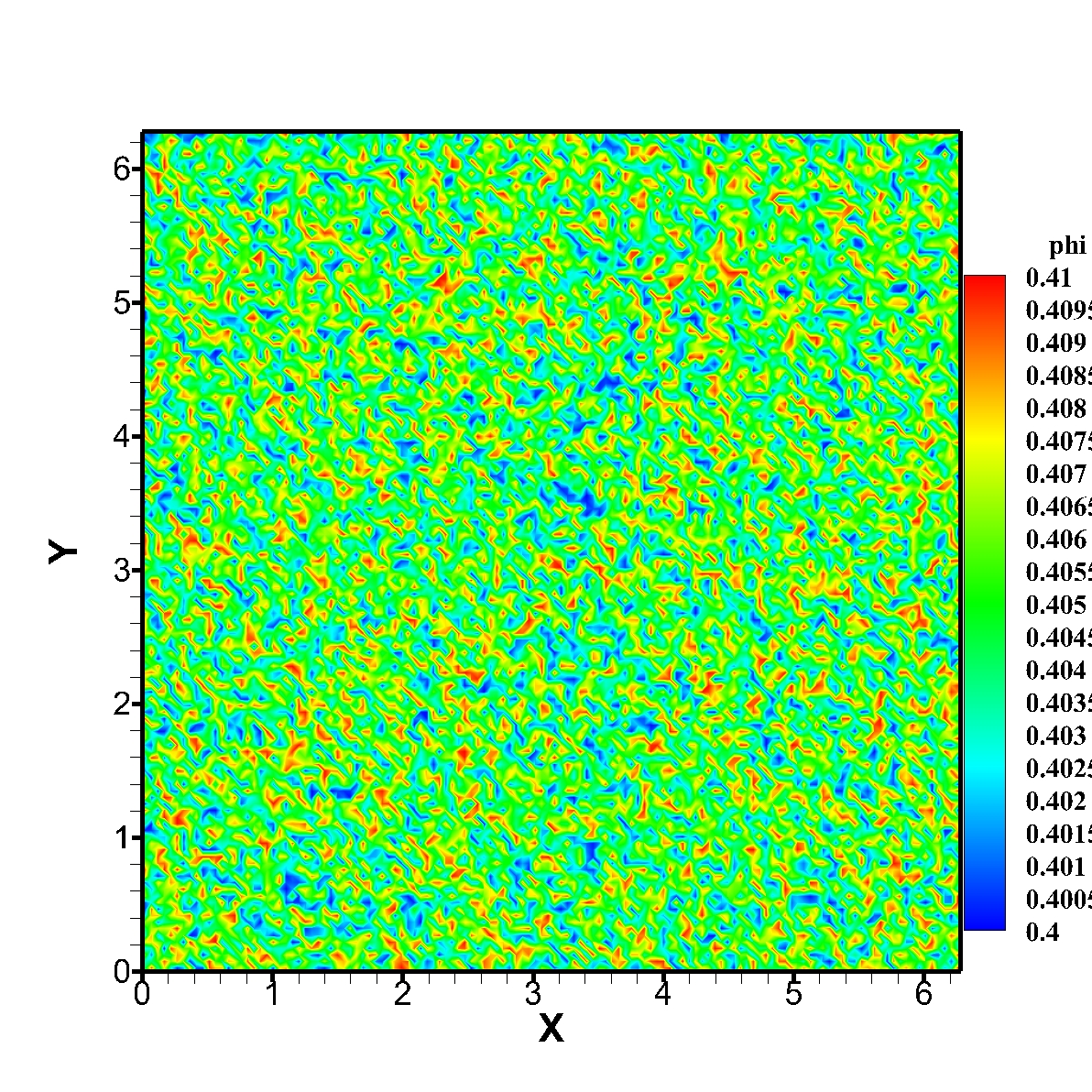} 
}
    \subfigure[$\rho(t=0)$]{
    \includegraphics[width=0.4\textwidth]{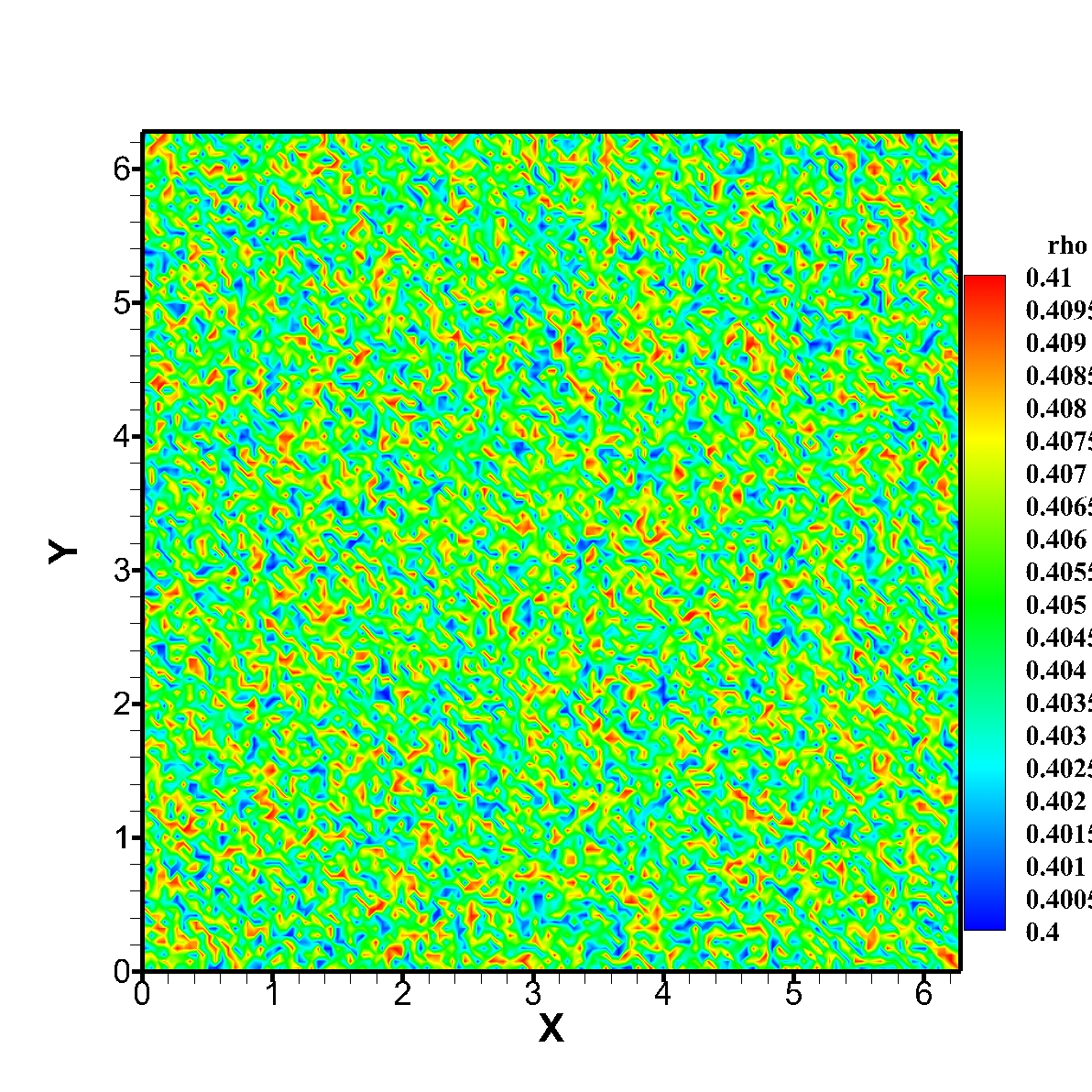}
}
    \caption{Snapshots of the phase variables $\phi$ and $\rho$, taken at $t=0$ for Example $\ref{example: spinodal}$.}
    \label{fig: spinodal t=0}
\end{figure}

\begin{figure}[h]
    \centering
    \subfigure[$\phi(t=0.5)$]{
    \includegraphics[width=0.4\textwidth]{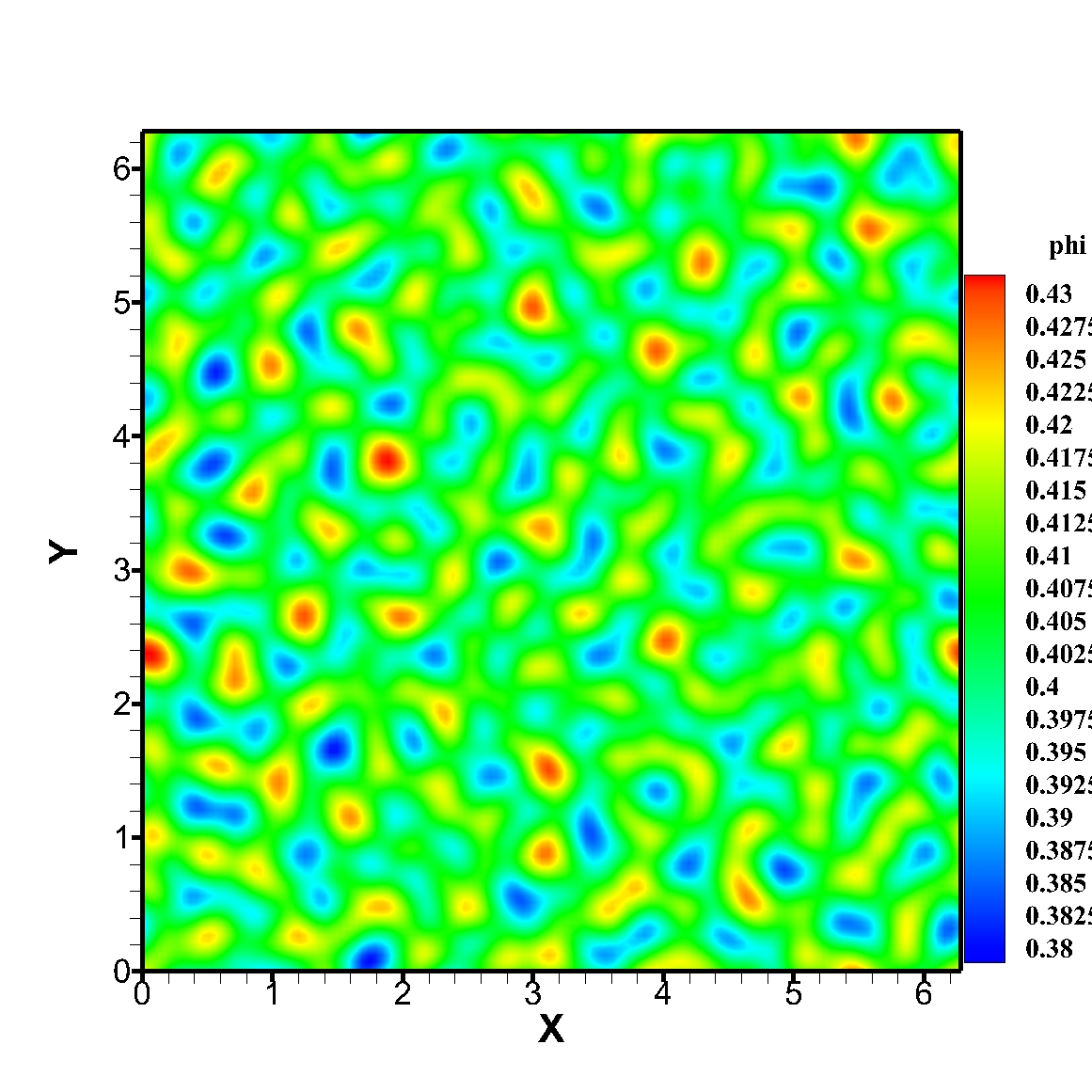} 
}
    \subfigure[$\rho(t=0.5)$]{
    \includegraphics[width=0.4\textwidth]{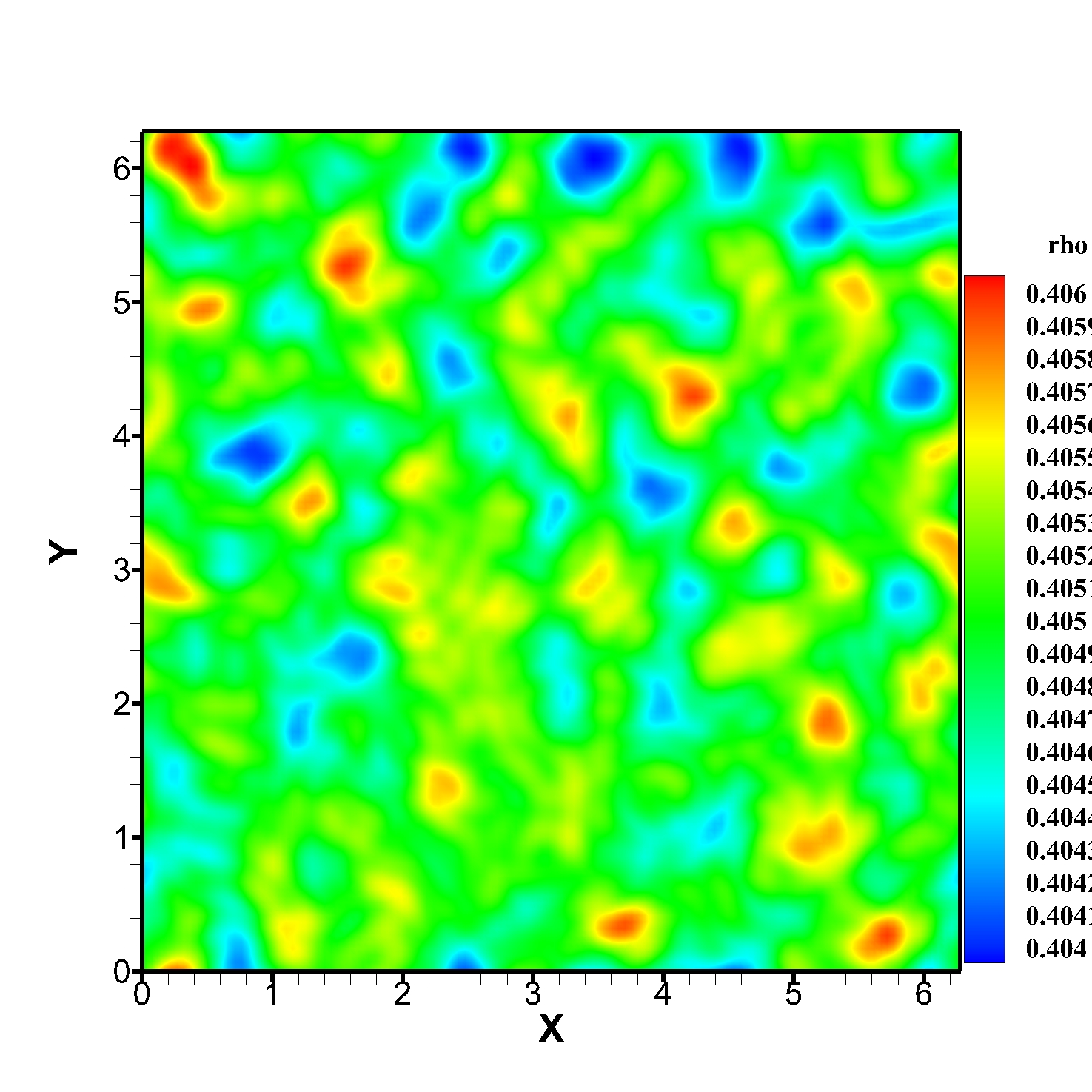}
}
    \caption{Snapshots of the phase variables $\phi$ and $\rho$, taken at $t=0.5$ for Example $\ref{example: spinodal}$.}
    \label{fig: spinodal t=0.5}
\end{figure}

\begin{figure}[h]
    \centering
    \subfigure[$\phi(t=2)$]{
    \includegraphics[width=0.4\textwidth]{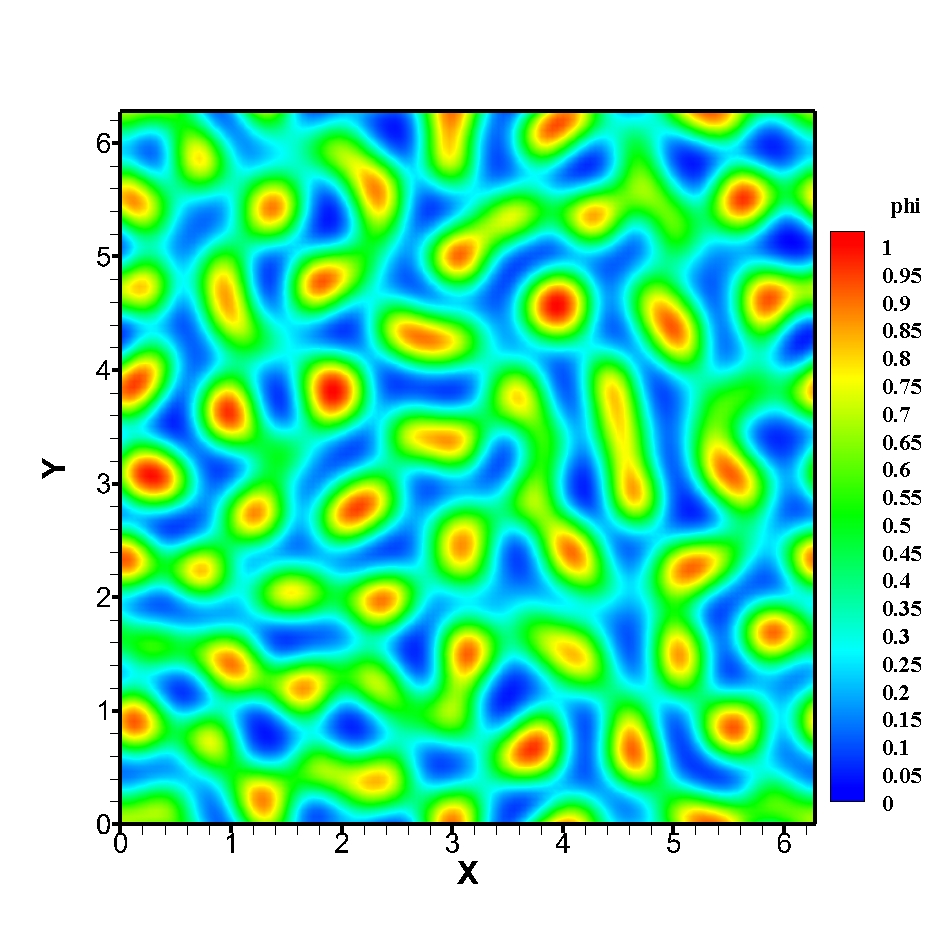} 
}
    \subfigure[$\rho(t=2)$]{
    \includegraphics[width=0.4\textwidth]{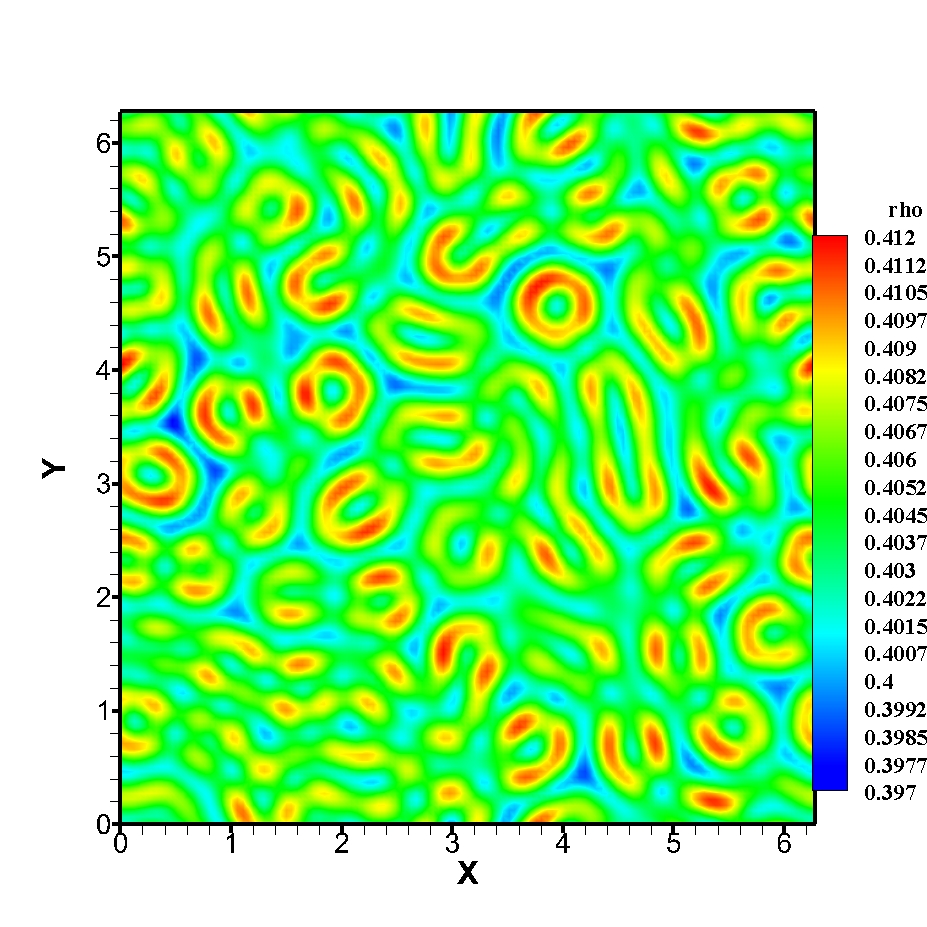}
}
    \caption{Snapshots of the phase variables $\phi$ and $\rho$, taken at $t=2$ for Example $\ref{example: spinodal}$.}
    \label{fig: spinodal t=2}
\end{figure}

\begin{figure}[h]
    \centering
    \subfigure[$\phi(t=10)$]{
    \includegraphics[width=0.4\textwidth]{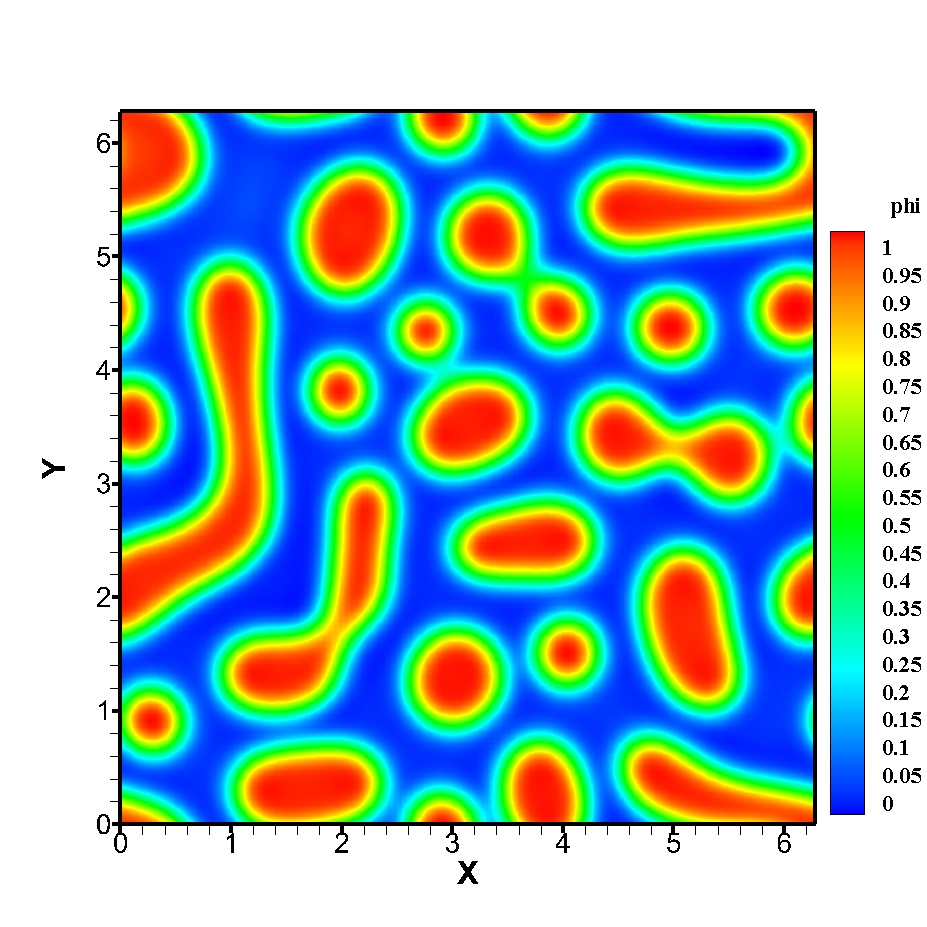} 
}
    \subfigure[$\rho(t=10)$]{
    \includegraphics[width=0.4\textwidth]{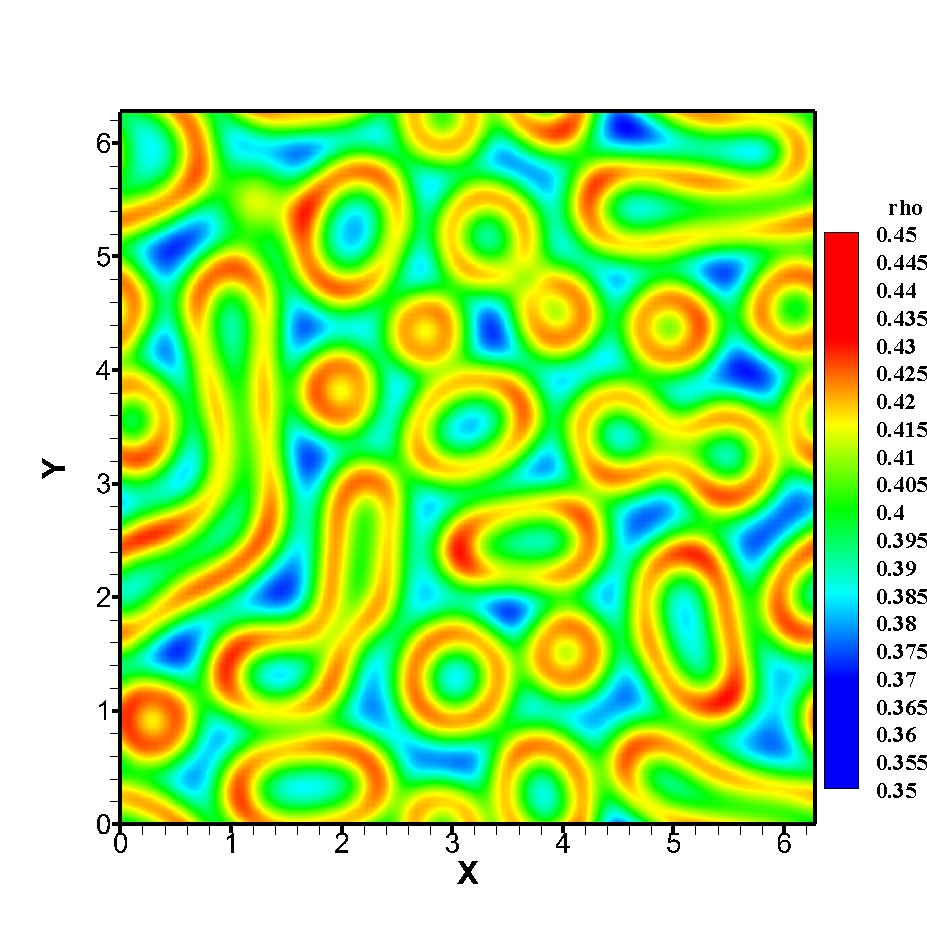}
}
    \caption{Snapshots of the phase variables $\phi$ and $\rho$, taken at $t=10$ for Example $\ref{example: spinodal}$.}
    \label{fig: spinodal t=10}
\end{figure}

\begin{figure}[h]
    \centering
    \subfigure[$\phi(t=40)$]{
    \includegraphics[width=0.4\textwidth]{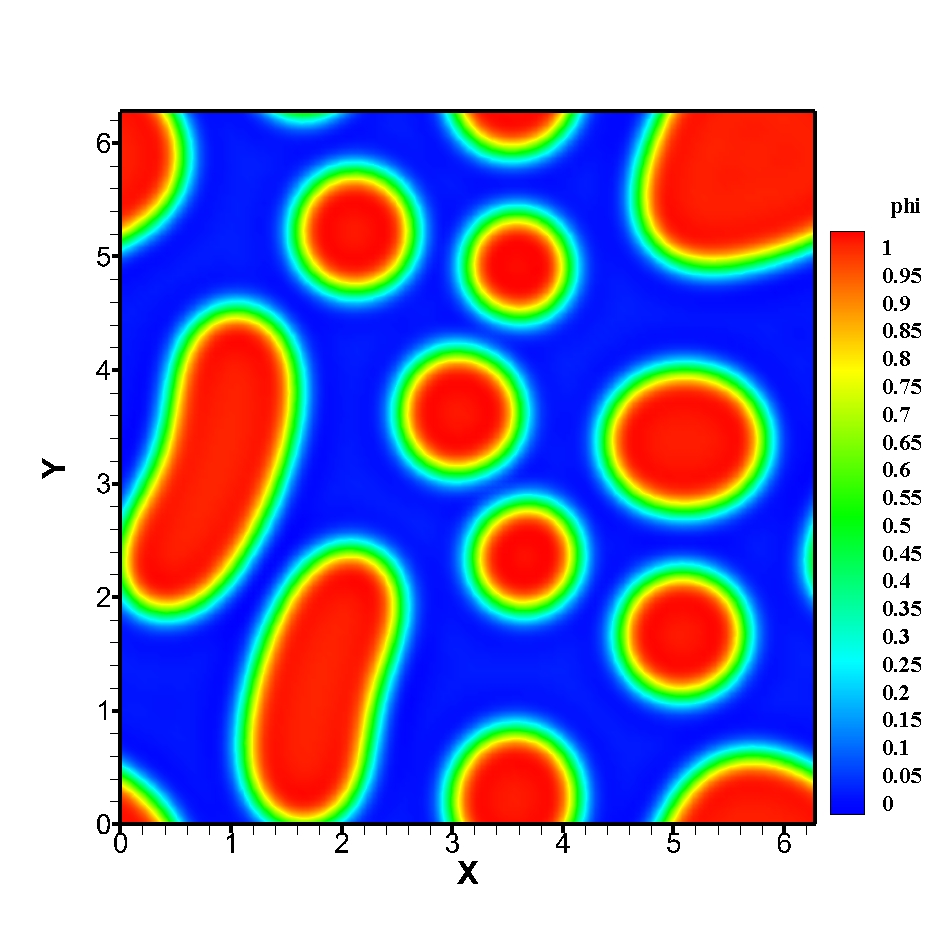} 
}
    \subfigure[$\rho(t=40)$]{
    \includegraphics[width=0.4\textwidth]{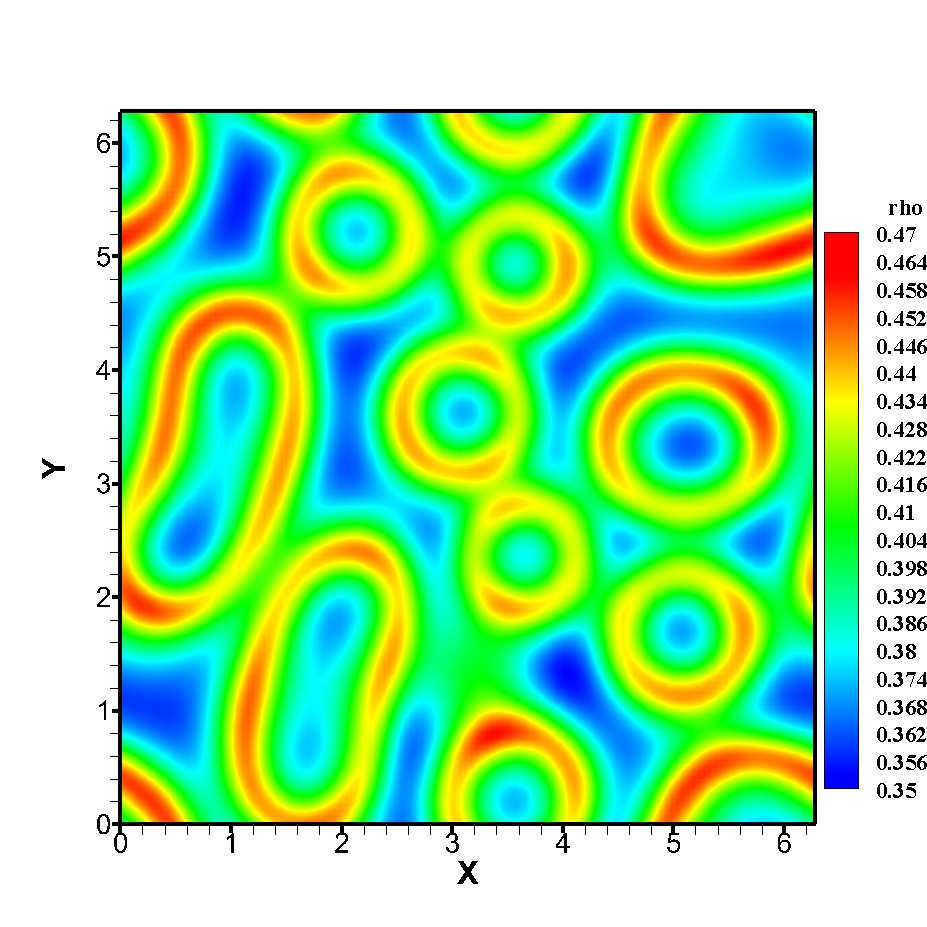}
}
    \caption{Snapshots of the phase variables $\phi$ and $\rho$, taken at $t=40$ for Example $\ref{example: spinodal}$.}
    \label{fig: spinodal t=40}
\end{figure}

\begin{figure}[h]
    \centering
    \subfigure[$\phi(t=700)$]{
    \includegraphics[width=0.4\textwidth]{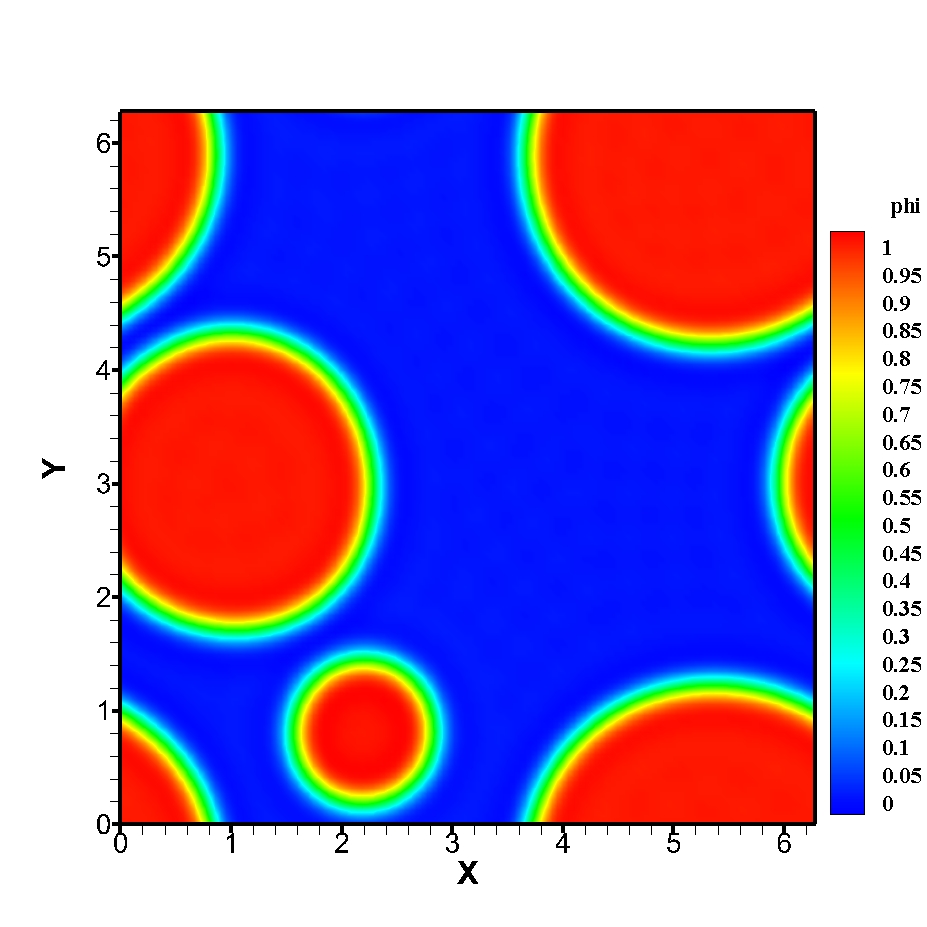} 
}
    \subfigure[$\rho(t=700)$]{
    \includegraphics[width=0.4\textwidth]{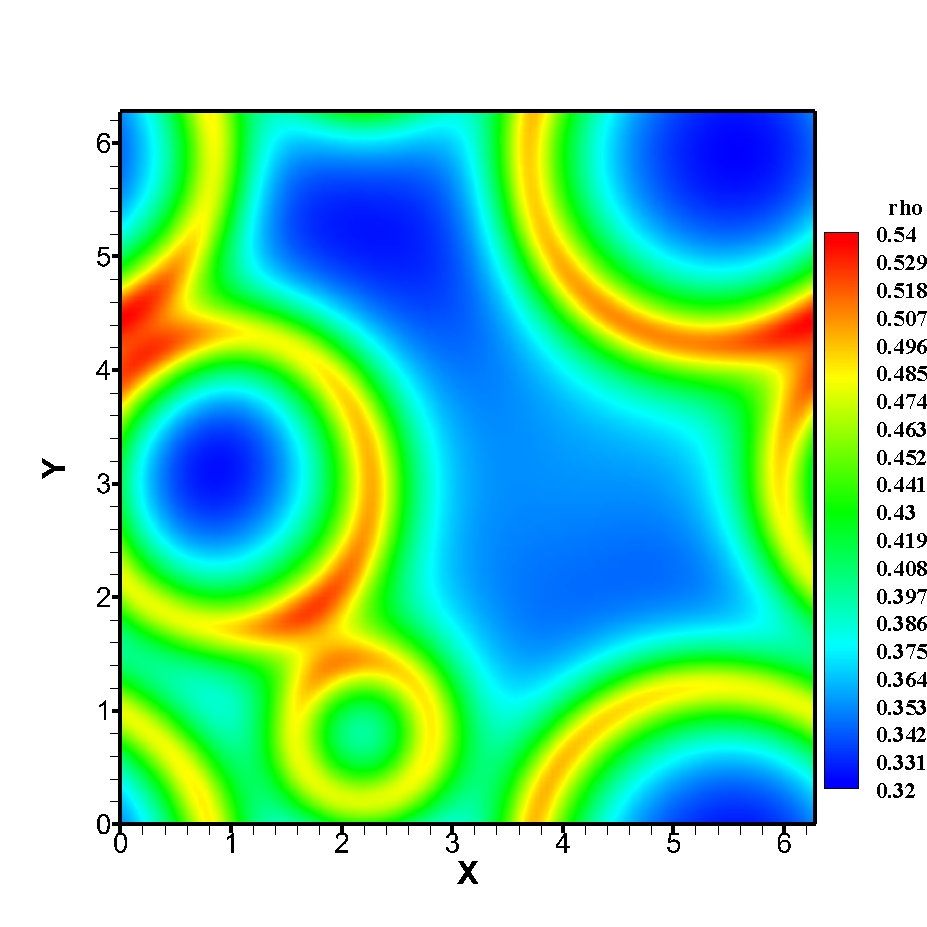}
}
    \caption{Snapshots of the phase variables $\phi$ and $\rho$, taken at $t=700$ for Example $\ref{example: spinodal}$.}
    \label{fig: spinodal t=700}
\end{figure}

\begin{figure}[h]
    \centering
    \includegraphics[width=1\textwidth]{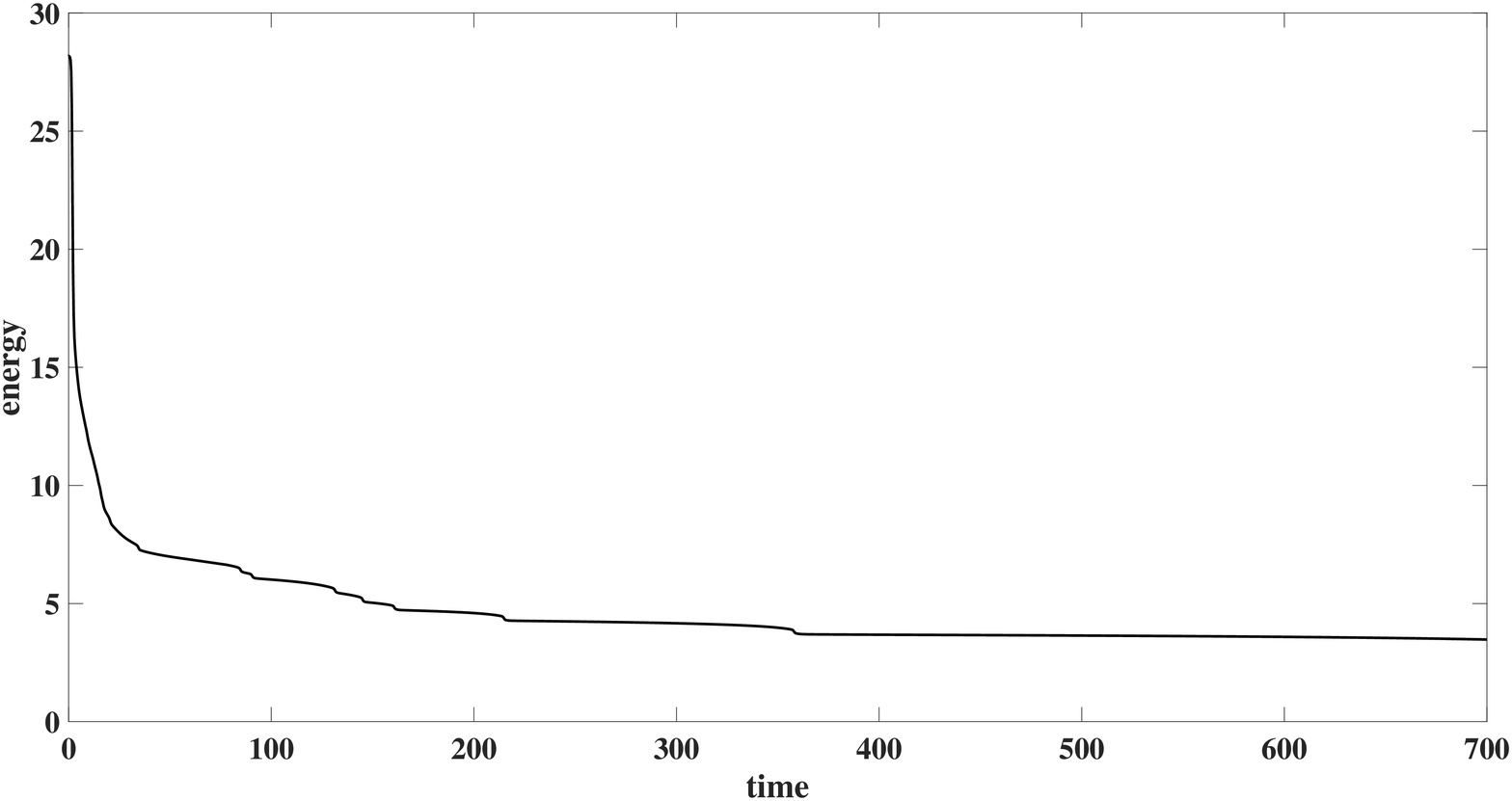}
    \caption{Time evolution of the free energy functional for Example $\ref{example: spinodal}$, which shows a monotone decay.}
    \label{fig: energy}
\end{figure}

\section{Conclusions}  \label{sec: conclusion}

In this paper, we propose and analyze a first order (in time) accurate, convex splitting scheme for the binary fluid-surfactant phase field model. 
The multi-phase structure and the singularity associated with the 1-Laplacian part makes the whole system very challenging, at both the theoretical and numerical levels. 
To overcome this subtle difficulty, we make an observation for a non-standard convex-concave decomposition of the free energy. 
In addition, the singular nature of the logarithmic function around the limit values prevents the numerical solution approaching these limit values, 
so that the positivity property is preserved for the numerical scheme. 
As a result, the convex structure of the implicit part guarantees the unique solvability and energy stability of the proposed numerical scheme. 
Furthermore, an optimal rate convergence analysis is carefully derived, which is the first such result in this area. 
Some numerical experiments are performed to validate the accuracy and energy stability of the proposed scheme.

\section{Acknowledgements}

The authors greatly appreciate many helpful discussions with Professor Hui Zhang, in particular for his insightful suggestions and comments. 
This work is supported in part by the grants NSF DMS-2012669 (C.~Wang), 
NSFC-11871105, the Science Challenge Project TZ2018002 
and the Fundamental Research Funds for the Central Universities (Z.~Zhang).  

\bibliographystyle{plain}
\bibliography{revision1}
\end{document}